\title{Crossed homomorphisms and low dimensional representations of mapping class groups of surfaces}
\author{Yasushi Kasahara}
\date{\today}
\numberwithin{equation}{section}
\begin{document}

\newtheorem{theorem}{Theorem}[section]
\newtheorem{main_thm}[theorem]{Theorem}
\newtheorem{lemma}[theorem]{Lemma}
\newtheorem{corollary}[theorem]{Corollary}
\newtheorem{prop}[theorem]{Proposition}

\theoremstyle{definition}
\newtheorem{definition}[theorem]{Definition}
\newtheorem*{fact}{Fact}
\newtheorem*{claim}{Claim}

\newtheorem{example}[theorem]{Example}
\newtheorem{problem}[theorem]{Problem}
\theoremstyle{remark}
\newtheorem{remark}[theorem]{Remark}

\newcommand{\ppar}{\par\goodbreak\medskip} 
\newcommand{\Z}{{\mathbb Z}}
\newcommand{\im}{\operatorname{Im}}
\newcommand{\Q}{\mathbb Q}
\newcommand{\C}{\mathbb C}
\newcommand{\M}[1]{\operatorname{Mod}( #1)}
\newcommand{\mg}{\mathcal{M}_g}
\newcommand{\I}{\mathcal{I}}
\newcommand{\Dm}[1]{\dim{\left( #1 \right)}}
\newcommand{\Sp}[1]{\operatorname{Sp}\left( #1 \right)}
\newcommand{\GL}[1]{\operatorname{GL}{\left( #1 \right)}}
\newcommand{\Hom}{\operatorname{Hom}}
\newcommand{\rank}{\operatorname{rank}}
\newcommand{\symp}{\rho_0} \newcommand{\trivial}{\varepsilon}
\newcommand{\FH}{H_1(S;\C)}
\newcommand{\cc}[1]{Z^1\left(\M{S}; #1 \right)}
 \newcommand{\ccC}{\cc{H_\C}}
 \newcommand{\ccZ}{\cc{H}}
\newcommand{\cb}[1]{B^1\left(\M{S}; #1 \right)}
 \newcommand{\cbC}{\cb{H_\C}}
 \newcommand{\cbZ}{\cb{H}}
\newcommand{\co}[1]{H^1\left( \M{S}; #1 \right)}
 \newcommand{\coC}{\co{H_\C}}
 \newcommand{\coZ}{\co{H}}
\newcommand{\eigensp}[2]{E_{#1}^{#2}}
  \newcommand{\ea}[1]{\eigensp{#1}{a}}
  \newcommand{\eb}[1]{\eigensp{#1}{b}}
  \newcommand{\ela}{\ea{\lambda}}
  \newcommand{\elb}{\eb{\lambda}}
\newcommand{\Uh}{\widehat{U}}
\newcommand{\Ab}{\bar{A}}
\newcommand{\Bb}{\bar{B}}
\newcommand{\Rb}{\bar{R}}
\newcommand{\Sb}{\bar{S}}
\newcommand{\Xb}{\bar{X}}
\newcommand{\At}{\widetilde{A}}
\newcommand{\Bt}{\widetilde{B}}
\newcommand{\Ft}{\widetilde{F}}
\newcommand{\Xt}{\widetilde{X}}
\newcommand{\Gt}{\widetilde{G}}
\newcommand{\Pt}{\widetilde{P}}
\newcommand{\Qt}{\widetilde{Q}}
\newcommand{\diag}[1]{\operatorname{diag}( #1 )}
\newcommand*{\transpose}[2][-3mu]{\ensuremath{\mskip1mu\prescript{\smash{\mathrm t\mkern#1}}{}{\mathstrut#2}}}\newcommand{\vect}[1]{\boldsymbol{#1}}
\newcommand{\Fix}[1]{\operatorname{Fix}\left( #1 \right)}
\newcommand{\rotation}{r} \newcommand{\XXX}{\begin{matrix}
		  1 & 1 & 0 & \alpha \\
		  0 & 1 & 0 &  0 \\
		  0 & \frac{1}{\alpha} & 1 & 1 \\
		  0 & 0 & 0 & 1		  
		 \end{matrix}}

\bibliographystyle{amsplain}

\maketitle
\begin{abstract}
 We continue the study of low dimensional linear representations of mapping class groups of surfaces
initiated by Franks--Handel and Korkmaz. We consider $(2g+1)$-dimensional 
complex linear representations of the pure mapping class groups of compact orientable surfaces of genus $g$. 
We give a complete classification of such representations for $g \geq 7$ up to conjugation, 
in terms of certain 
twisted $1$-cohomology groups of the mapping class groups. 
A new ingredient is to use the computation of a related twisted $1$-cohomology 
group by Morita.
The classification result implies in particular that there are no irreducible linear representations of 
dimension $2g+1$ for $g \geq 7$, which marks a contrast with the case $g=2$.
\end{abstract} 

\tableofcontents

\section{Introduction} \label{intro}\par

Linear representations are fundamental objects associated with the mapping class group of a surface.
However, the whole picture of them, even in low dimensions, are not understood very well.
Until the appearance of the Topological Quantum Field Theory around 1990, the symplectic representation 
was almost the only known linear representation, whereas the residually finiteness 
(\cite{grossman}, \cite{birman}) had assured the existence of other linear representations.
Even after the TQFT, the existence of lower dimensional representations which does not
factor through the permutation group of punctures continued to remain unknown. 
We note, if the surface has $r >1$ punctures, that the mapping class group usually considered has a 
surjection to the symmetric group of degree $r$ 
which is induced by the permutation of the punctures, 
and therefore the totality of linear representations should contain the representation theory of 
symmetric group and would be rather complicated.
\par

In 2011, Franks--Handel \cite{franks-handel} and then Korkmaz \cite{ldl} 
considered the {\em pure} version of the mapping class group of a surface of genus $g \geq 1$, 
the group of those mapping classes with trivial permutation of the punctures, 
and showed that there exist in fact no nontrivial complex linear representations of dimensions 
less than $2g$, with exceptions of abelian representations which occur only in the case of $g \leq 2$. 
This result was soon succeeded by Korkmaz \cite{sru}, who showed up to conjugation 
that the symplectic representation 
is the only nontrivial complex linear representation of dimension $2g$ in the case of $g \geq 3$.
\par

In this paper, following these works of Franks--Handel and Korkmaz, we consider 
the complex linear representations of dimension $2g+1$. As was studied by Trapp \cite{trapp}, such a 
representation can be constructed from a crossed homomorphism of the mapping class group with values 
in the $1$-homology group of the {\em closed} surface of the same genus. 
Our main result asserts, in the case of $g \geq 7$, 
that all nontrivial representations of dimension $2g+1$ can be obtained in this way up to dual
(Theorem \ref{surjectivity-thm} ). By using this main result, in this introduction, we give a 
complete classification of the $(2g+1)$-dimensional complex linear representations for $g \geq 7$. 
It follows in particular that there are no irreducible representations of dimension $2g+1$.
The classification is described in terms of certain twisted 
$1$-cohomology group of the (pure) mapping class group.
A new ingredient to prove the main result is to use the computation of a related twisted $1$-cohomology 
group by Morita \cite{morita_jacobian_I}.

\subsection{Statement of the classification result}\par

\subsection*{Setting}\par
Let $S=S_{g,r}^p$ denote a connected compact oriented surface of genus $g \geq 1$ with $p \geq 0$ 
boundary components and 
$r \geq 0$ punctures in interior. Here, a puncture is meant by a marked point.
The mapping class group of $S$, denoted by $\M{S}$, is defined to be
the group of the isotopy classes of the orientation preserving homeomorphisms of $S$ which preserve 
the marked points and the boundary, both pointwise.
Here, the isotopies are assumed to preserve the marked points and the boundary, both pointwise.
An $n$-dimensional linear representation of $\M{S}$ is simply a group homomorphism $\M{S} \to \GL{n, \C}$.
The two $n$-dimensional linear representations $\phi_1$ and  $\phi_2 : \M{S} \to \GL{n, \C}$  
are said to be conjugate if there exists 
some $A \in \GL{n, \C}$ such that $\phi_1(f) = A \phi_2(f) A^{-1}$ for each $f \in \M{S}$. Let 
$X$ be the set of conjugacy classes of all $(2g+1)$-dimensional linear representations of $\M{S}$. 
We denote by $X_0$ the set $X$ with the class of the trivial representation removed.
\par

\subsubsection*{Symplectic representation $\symp$}\par

Let $\Sb$ denote the closed surface obtained from $S$ by gluing a $2$-disk 
along each boundary component of $S$ 
and forgetting all the punctures.  We denote by $H$ the homology group $H_1(\Sb;\Z)$, which is 
a free abelian group of rank $2g$, and by $H_\C$ the homology group $H_1(\Sb;\C)$ with coefficients 
in $\C$. Note that $H_\C$ is canonically isomorphic to $H \otimes_\Z \C$ so that we may 
consider as $H \subset H_\C$. 
\par

The inclusion $S \hookrightarrow \Sb$ induces the homomorphism 
$\M{S} \to \M{\Sb}$ by extending mapping classes of $S$ with the identity on $\Sb \smallsetminus S$
so that the natural action 
of $\M{\Sb}$ on $H_\C$ induces a group homomorphism $\symp: \M{S} \to \GL{H_\C}$. 
We call $\symp$ the symplectic representation of $\M{S}$. We use the same symbol to denote the 
matrix form of this representation as $\symp: \M{S} \to \GL{2g,\C}$ with respect to a certain 
basis for $H_\C$, which we often do not specify explicitly. This would not make confusion since 
we are concerned with linear representations only up to conjugation. We consider $H_\C$, as well as
$\C^{2g}$, a left $\M{S}$-module via $\symp$ .
\par

\subsubsection*{Semidirect product and linear representation}\par

A {\em crossed homomorphism} of $\M{S}$, with values in a left $\M{S}$-module $M$, is 
a mapping $k: \M{S}  \to M$ which satisfies
\begin{equation}\label{crossed-hom}
 k(fg) = k(f) + f  k(g) \quad \text{for $f$, $g \in \M{S}$.} 
\end{equation} 
The crossed homomorphism $k$ is said to be {\em principal} if there exists some $m \in M$ such that 
$k(f) = fm - m$ for each $f \in \M{S}$. All the crossed homomorphisms of $\M{S}$ with values in $M$
naturally form an additive group, and 
its quotient by the subgroup consisting of all the principal 
crossed homomorphisms is isomorphic to the first cohomology group of $\M{S}$
with twisted coefficients in $M$, which we denote by $H^1(\M{S}; M)$.
\par
In order to construct a $(2g+1)$-dimensional linear representation from a crossed homomorphism, 
we consider the $\M{S}$-module $H_\C = \C^{2g}$, via the symplectic representation. 
For the natural left action of $\GL{2g,\C}$ on $\C^{2g}$, we denote the corresponding 
semidirect product by 
$\C^{2g} \rtimes \GL{2g, \C}$.
The correspondence 
$$ (z, A) \mapsto \left(\begin{array}{c|c} A & z  \\ \hline 0 & 1\end{array}\right) \quad 
    \text{($z \in \C^{2g}$, $A \in \GL{2g, \C}$)}$$
defines an injective homomorphism $i : \C^{2g} \rtimes \GL{2g, \C} \to \GL{2g+1,\C}$. Given a crossed 
homomorphism $c : \M{S} \to \C^{2g}$, with values in the left $\M{S}$-module $ H_\C = \C^{2g}$, the 
correspondence
$$ f \in \M{S} \mapsto (c(f), \symp(f))$$
defines a homomorphism $\M{S} \to \C^{2g} \rtimes \GL{2g, \C}$. Composing this homomorphism 
with $i$, one obtains a
nontrivial $(2g+1)$-dimensional linear representation 
$$\phi_c : \M{S} \to \GL{2g+1, \C}.$$
It will turn out that its conjugacy class $[\phi_c] \in X_0$ depends 
only on its cohomology class in $H^1(\M{S}; H_\C)$, and furthermore, does not change under  
scalar multiplication 
by a nonzero complex number in the cohomology group (Lemma \ref{well-definedness}). 
Therefore, the correspondence which sends the crossed homomorphism c 
to $[\phi_c] \in X_0$ descends to a mapping 
$$\sigma: H^1(\M{S}; H_{\C})/\C^{\times} \to X_0$$
where $\C^{\times}$ denotes $\C \smallsetminus \{ 0 \}$. 
\par

Now let $\iota : X_0 \to X_0$ be the involution which sends the conjugacy class of 
any linear representation $\phi: \M{S} \to \GL{2g+1, \C}$ to the class of its dual representation 
$$\phi^*: \M{S} \to \GL{2g+1,\C}, \quad 
\phi^*(f) = \left( \transpose{\phi(f)}\right)^{-1} \quad \text{for $f \in \M{S}$.}$$
Let $\overline{\sigma} : H^1(\M{S}; H_{\C})/\C^{\times} \to X_0/\langle \iota \rangle$ denote the 
composition of $\sigma$ with the quotient mapping $X_0 \to X_0/\langle \iota \rangle$.
We denote by $\Fix{\iota}$ the set of fixed points of $\iota$.
The following classification result states that we can identify 
$H^1(\M{S}; H_{\C})/\C^{\times}$ with almost the exact half of $X_0$ via $\sigma$,
if $g$ is sufficiently large:
\ppar
\begin{theorem} \label{clsf-thm}
 Let $g \geq 7$.
 \begin{enumerate}
  \item[(1)] The mapping $\overline{\sigma}: H^1(\M{S}; H_{\C})/\C^{\times} \to X_0/\langle \iota \rangle$ 
	     is bijective.
  \item[(2)] The set $\Fix{\iota}$ consists of a single element represented by the direct sum of the 
	     symplectic representation and the $1$-dimensional trivial representation.
 \end{enumerate}
\end{theorem}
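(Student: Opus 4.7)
The plan is to deduce Theorem \ref{clsf-thm} from the surjectivity result Theorem \ref{surjectivity-thm}, combined with a submodule analysis of $\phi_c$ and its dual. The key representation-theoretic input is that $\symp$ is irreducible for $g \geq 3$, so Schur's lemma applies. Surjectivity of $\overline{\sigma}$ is immediate: Theorem \ref{surjectivity-thm} asserts that every nontrivial $(2g+1)$-dimensional representation is conjugate to some $\phi_c$ or $\phi_c^*$, so after passing to $X_0/\langle \iota \rangle$ every class is hit.

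For injectivity, the construction presents $\phi_c$ as an extension of $\M{S}$-modules
$$0 \to H_\C \to V_c \to \C \to 0,$$
which is non-split precisely when $[c] \neq 0$ in $H^1(\M{S}; H_\C)$. Using irreducibility of $\symp$, when $[c] \neq 0$ the module $V_c$ has $H_\C$ as its unique proper nonzero submodule; dually, $\phi_c^*$ has a one-dimensional trivial submodule as its unique such submodule. If $\phi_{c_1} \sim \phi_{c_2}$ via some $P \in \GL{2g+1,\C}$, then $P$ must preserve the distinguished copy of $H_\C$, and by Schur's lemma restricts to a scalar on it, so $P$ has the block form
$$P = \begin{pmatrix} \lambda I_{2g} & v \\ 0 & \mu \end{pmatrix} \quad (\lambda, \mu \in \C^{\times},\ v \in \C^{2g}).$$
Comparing the off-diagonal blocks in $P\phi_{c_1}(f) = \phi_{c_2}(f)P$ yields $\lambda c_1(f) - \mu c_2(f) = (\symp(f) - I)v$, a principal coboundary, so $[c_1] = (\mu/\lambda)[c_2]$ in $H^1(\M{S}; H_\C)$, hence equal modulo $\C^{\times}$. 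The case $\phi_{c_1} \sim \phi_{c_2}^*$ with both $[c_i] \neq 0$ is ruled out since the dimensions of their unique proper submodules ($2g$ versus $1$) do not match; the boundary cases in which some $[c_i] = 0$ reduce to the split representation $\symp \oplus \trivial$ and are handled similarly.

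For part (2), the direct sum $\symp \oplus \trivial$ is self-dual because $\symp \cong \symp^*$ via the symplectic form, so $[\symp \oplus \trivial] \in \Fix{\iota}$. Conversely, any class in $\Fix{\iota}$ can be represented by some $\phi_c$ (since $\iota[\phi_c^*] = [\phi_c]$, a fixed class of the form $[\phi_c^*]$ also forces $[\phi_c]$ to be fixed), and then $[\phi_c] = [\phi_c^*]$ forces $[c] = 0$ by the same submodule-dimension obstruction as in injectivity. Hence $\Fix{\iota} = \{[\symp \oplus \trivial]\}$.

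The main obstacle, beyond invoking the assumed Theorem \ref{surjectivity-thm}, is the careful submodule analysis of $\phi_c$ and $\phi_c^*$ via irreducibility of $\symp$. The degenerate cases where $[c] = 0$ and $\phi_c$ acquires additional submodules require extra bookkeeping to ensure no spurious identifications arise under $\overline{\sigma}$.
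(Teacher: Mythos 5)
Your proposal is correct and follows essentially the same route as the paper: surjectivity is quoted from Theorem \ref{surjectivity-thm}, injectivity comes from constraining the conjugating matrix to block upper-triangular form with scalar diagonal block (via irreducibility of $\symp$ and Schur's lemma) and reading off the coboundary relation $\lambda c_1(f)-\mu c_2(f)=(\symp(f)-I)v$, the mixed case $[\phi_{c_1}]=\iota([\phi_{c_2}])$ is forced down to the split representation $\symp\oplus\trivial$, and part (2) combines surjectivity with that same analysis. Your submodule-uniqueness phrasing is just a structural repackaging of the paper's direct block computation (where the lower-left block is shown to vanish as an $\M{S}$-homomorphism $H_\C\to\C$), and the degenerate cases $[c_i]=0$ you defer do work out as you indicate.
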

\begin{remark}
 (1) As a consequence, for two crossed homomorphisms $c$ and $c^\prime$, we can easily see, 
 once a basis of $H_\C$ is fixed arbitrarily, that the representations $\phi_c$ and $\phi_{c^\prime}$ 
 represent the same class in $X_0$ if and only if 
 they are conjugate by an element in $i(\C^{2g} \rtimes \GL{2g, \C})$.
\par
 (2) For the case $r=0$ and $p=1$, the computation by Morita \cite{morita_jacobian_I} implies
 $$\#{H^1(\M{S};H_{\C})/\C^\times} = 2$$ ({\em c.f.} Theorem \ref{coeff_H_C}). 
 Therefore, at least for $g \geq 7$, 
 we can conclude that the two $(2g+1)$-dimensional 
 complex linear representations, the one constructed by Trapp \cite{trapp}, and the other 
 by Matsumoto--Nishino--Yano  \cite{matsumoto-nishiyama-yano} are conjugate up to dual. It might be 
 interesting to point out that both the representations seem to have their origins in
 Iwahori--Hecke algebra of Artin group.
\par
 (3) The above Theorem \ref{clsf-thm} does not hold for $(g,r,p) = (2,0,0)$. In fact, while 
 Morita's computation ({\em ibid})
  shows $H^1(\M{S};H_\C) = 0$ in this case, the Jones 
 representation of genus $2$ (\cite{jones}, \cite{kasahara}), a modification of an Iwahori--Hecke algebra 
 representation of the $6$-strand braid group,  gives a family of infinitely many non-conjugate irreducible
 $5$-dimensional complex linear representations of $\M{S}$. It is not known whether the theorem holds for 
 $g = 3$-$6$ or not.
\end{remark}
\ppar

The following is an obvious corollary to Theorem \ref{clsf-thm}:
\begin{corollary} \label{cor-obvious}
 Let $g \geq 7$. Then $\M{S}$ has no {\em irreducible} complex linear representations of dimension $2g+1$.
\end{corollary}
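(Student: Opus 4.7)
The plan is to derive the corollary directly from Theorem \ref{clsf-thm} by checking that every conjugacy class in $X$ admits a reducible representative. Since reducibility is invariant under conjugation, this will prove that no $(2g+1)$-dimensional irreducible representation exists.

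First I would observe that for any crossed homomorphism $c: \M{S} \to H_\C$, the associated representation $\phi_c$ is reducible by construction. Indeed, $\phi_c$ takes values in the block upper triangular subgroup $i(\C^{2g} \rtimes \GL{2g,\C}) \subset \GL{2g+1, \C}$, whose elements have the form
$$\left(\begin{array}{c|c} A & z \\ \hline 0 & 1 \end{array}\right), \quad A \in \GL{2g,\C},\ z \in \C^{2g}.$$
The $2g$-dimensional hyperplane $\C^{2g} \times \{0\} \subset \C^{2g+1}$ is invariant under every such matrix, and it is a proper nonzero subspace since $g \geq 1$. Hence $\phi_c$ is reducible for every crossed homomorphism $c$.

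Next, reducibility passes to duals: if $W \subset \C^{2g+1}$ is a proper nonzero invariant subspace for $\phi$, then its annihilator $W^\perp \subset (\C^{2g+1})^*$ is a proper nonzero invariant subspace for $\phi^*$. Therefore every element of $\iota([\phi_c])$ is also represented by a reducible representation. By Theorem \ref{clsf-thm}(1), the map $\overline{\sigma}$ is surjective onto $X_0/\langle \iota \rangle$, so every class in $X_0$ equals either $[\phi_c]$ or $\iota([\phi_c])$ for some crossed homomorphism $c$; in either case the class contains a reducible representative. The unique class in $X \smallsetminus X_0$, namely the trivial $(2g+1)$-dimensional representation, is manifestly reducible. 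This exhausts $X$ and completes the argument.

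There is essentially no obstacle: the whole content lies in exhibiting the invariant hyperplane in the image of $i$, which is visible from the defining block form, together with the trivial observation that duals of reducible representations are reducible. Everything else is an immediate citation of Theorem \ref{clsf-thm}(1).
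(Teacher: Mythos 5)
Your argument is correct and is exactly the paper's (implicit) route: the paper states the corollary as an obvious consequence of Theorem \ref{clsf-thm}, precisely because every class in $X_0$ is, up to dual, represented by some $\phi_c$, which visibly preserves the hyperplane $\C^{2g}\times\{0\}$, and reducibility is preserved under conjugation and duality. Your spelled-out verification of these points matches what the paper leaves to the reader.
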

\ppar

Conversely, this corollary, together with results by Franks--Handel and Korkmaz, implies the 
surjectivity of $\overline{\sigma}$ immediately (Theorem \ref{surjectivity-thm}). 
However, our argument for the corollary 
simultaneously implies the surjectivity of $\overline{\sigma}$, the proof of which occupies the most part 
of this paper. It might be worthwhile to pursue simpler arguments for the corollary.
\ppar

\subsection{Proof of  Theorem \ref{clsf-thm}} \par

Suppose $g \geq 1$ for the moment. 
We first check that the mapping $\sigma$ is well-defined. Recall that for a crossed homomorphism
$c: \M{S} \to H_\C = \C^{2g}$ via the symplectic representation $\symp$, the linear representation 
$\phi_c : \M{S} \to \GL{2g+1, \C}$ is defined by 
\begin{equation}\label{formula_phi_c}
    \phi_c (f) = \left(\begin{array}{c|c} \symp(f) & c(f)  \\ \hline 0 & 1\end{array}\right)
    \quad \text{for $f \in \M{S}$.}
\end{equation}
\ppar

\begin{lemma} \label{well-definedness}
 \begin{enumerate}
  \item[(1)] The conjugacy class $[\phi_c] \in X_0$ depends only on the cohomology class 
	of $c$ in $H^1(\M{S}; H_\C)$.
  \item[(2)] For any $z \in \C^\times$, $[\phi_{zc}] = [\phi_c]$ in $X_0$.
 \end{enumerate}
\end{lemma}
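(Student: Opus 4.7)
The plan is to prove both parts by direct block-matrix computations with explicit conjugating elements in the subgroup $i(\C^{2g} \rtimes \GL{2g,\C}) \subset \GL{2g+1,\C}$. Conceptually, conjugating in a semidirect product by an element of the normal subgroup translates cocycles by coboundaries, while conjugating by a central scalar in the quotient rescales the cocycle; both facts specialize to exactly what is needed.

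For part (1), I assume $c$ and $c'$ represent the same class in $\coC$, so there exists $m \in H_\C$ with $c'(f) - c(f) = \symp(f)m - m$ for all $f \in \M{S}$. I take
\[ P = \left(\begin{array}{c|c} I & -m \\ \hline 0 & 1\end{array}\right) \in \GL{2g+1,\C}. \]
A short $2 \times 2$ block multiplication shows that $P \phi_c(f) P^{-1}$ has top-left block $\symp(f)$ and top-right block $c(f) + \symp(f)m - m = c'(f)$, hence equals $\phi_{c'}(f)$. Since $\symp$ is nontrivial for $g \geq 1$, $\phi_c$ is nontrivial, so $[\phi_c]$ indeed lies in $X_0$.

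For part (2), I fix $z \in \C^\times$ and conjugate by
\[ D = \left(\begin{array}{c|c} zI & 0 \\ \hline 0 & 1\end{array}\right). \]
Because $zI$ commutes with every element of $\GL{2g,\C}$, the top-left block of $D \phi_c(f) D^{-1}$ stays $\symp(f)$, while the top-right column is scaled by $z$, giving $\phi_{zc}(f)$ on the nose.

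There is no real obstacle here; the lemma is essentially a bookkeeping statement that translates the definition of $\coC$ and of the $\C^\times$-action into conjugacy in $\GL{2g+1,\C}$. The only point to watch is the sign of $m$ in the conjugator $P$, which must be chosen so that the principal cocycle appears with the correct sign after conjugation; this is the kind of minor issue that is easy to get backwards on a first attempt.
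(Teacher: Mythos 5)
Your proposal is correct and matches the paper's own proof: both parts are handled by conjugating with the same explicit block matrices (an upper-triangular unipotent element for the coboundary in (1), and $\operatorname{diag}(zI,1)$ for the scaling in (2)), with your choice of $-m$ just corresponding to the paper's $w_0$ taken with the opposite sign convention for the principal cocycle.
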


\begin{proof}
 (1) Let $c^\prime : \M{S} \to \C^{2g}$ be another crossed homomorphism representing $[c] \in H^1(\M{S};\C)$. 
 We can then choose $w_0 \in \C^{2g}$ so that 
 $$ c(f) - c^\prime(f) = \symp(f)w_0 - w_0 $$
 for each $f \in \M{S}$. Putting $A := 
       \left(\begin{array}{c|c} I  & w_0  \\ \hline 0 & 1\end{array}\right)$ where $I$ denotes the 
 identity matrix, a direct computation implies 
 $$ A \phi_c(f) A^{-1} = \phi_{c^\prime}(f) \quad \text{for each $f \in \M{S}$.}$$
 This shows $[\phi_{c^\prime}] = [\phi_c]$.
 \par
 (2) For any $z \in \C^\times$, let $A := 
       \left(\begin{array}{c|c} zI  & 0  \\ \hline 0 & 1\end{array}\right)$. Then for each $f \in \M{S}$,
 a direct computation implies 
 $$ A \phi_c(f) A^{-1} = \phi_{zc}(f).$$
Hence we have $[\phi_{zc}] = [\phi_c]$ in $X_0$.
\end{proof}
\ppar

In view of this lemma, the following is well-defined.

\begin{definition}
 We define $\sigma: H^1(\M{S}; H_\C)/\C^{\times} \to X_0$ as the mapping which sends the class represented by 
 a crossed homomorphism $c: \M{S} \to H_\C = \C^{2g}$ to $[\phi_c] \in X_0$. We also define 
 $\bar{\sigma}: H^1(\M{S}; H_\C)/\C^{\times} \to X_0/\langle \iota \rangle$ as the composition of $\sigma$
 with the quotient mapping $X_0 \to X_0/\langle \iota \rangle$ where $\iota$ denotes the involution 
induced by taking the dual representation. 
\end{definition}
\par

We remark that $\sigma$, and hence $\bar{\sigma}$, are independent of the identification $H_\C = \C^{2g}$, 
{\em i.e.}, the choice of a basis of $H_\C$.
\ppar

To prove Theorem \ref{clsf-thm}, the most crucial is the following.

\begin{theorem} \label{surjectivity-thm}
 For $g \geq 7$, the mapping $\bar{\sigma}$ is surjective.
\end{theorem}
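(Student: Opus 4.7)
The plan is to show that every nontrivial class in $X_0$ is realised (up to duality) by a representation of the form $\phi_c$ coming from a crossed homomorphism $c : \M{S} \to H_\C$. Given $\phi : \M{S} \to \GL{2g+1,\C}$ nontrivial, the block shape of $\phi_c$ makes it clear that it suffices to produce a $2g$-dimensional $\M{S}$-invariant subspace $W \subset \C^{2g+1}$ on which $\phi$ acts as the symplectic representation $\symp$, with the quotient $\C^{2g+1}/W$ being trivial; then a choice of complementary basis vector exhibits $\phi$ as $\phi_c$ for the corresponding cocycle, so $[\phi] = \sigma([c])$. If instead one only locates a $1$-dimensional trivial invariant subspace, passing to the dual $\phi^*$ converts it into a $2g$-dimensional invariant hyperplane, reducing to the previous situation for $\phi^*$.

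To carry out this reduction, I would analyse a proper nonzero $\M{S}$-invariant subspace $W \subset \C^{2g+1}$ of smallest possible dimension. Since $\dim W \leq 2g$, the Franks--Handel and Korkmaz classifications apply: for $\dim W < 2g$ the restriction is trivial, while for $\dim W = 2g$ the restriction is trivial or conjugate to $\symp$. Minimality of $\dim W$ then forces $\dim W \in \{1, 2g\}$ (any trivial subrepresentation splits off a line), and using that $\M{S}$ admits no nontrivial complex characters for $g \geq 3$, one arrives at the two cases above. A short additional step, eliminating a nontrivial character on the $1$-dimensional quotient when $W \cong \symp$, is formal from $H^1(\M{S};\C)=0$.

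The genuine obstacle is proving that such a proper invariant subspace exists, i.e.\ that $\phi$ is reducible. My plan here is to exploit rigidity of Dehn twist images. A Dehn twist $T_\alpha$ along a nonseparating curve has unipotent symplectic image of rank-one nilpotent type, so in any $(2g+1)$-dimensional extension $\phi(T_\alpha)$ must have a very restricted Jordan form. Combining many disjoint nonseparating curves yields commuting Dehn twists whose images must be simultaneously analysable, and the braid and lantern relations should then propagate any common eigenspace into a flag that is invariant under all of $\M{S}$. This is where Morita's explicit determination of $H^1(\M{S};H_\C)$ enters as the new ingredient: once partial reducibility is in hand, Morita's computation controls the finitely many possible extension classes and rules out putative irreducible $\phi$ by forcing the cocycle data to lie in the already-classified symplectic-block structure. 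Coupling the Jordan-form analysis of Dehn twist images with Morita's cohomology computation is where I expect the bulk of the work to lie.
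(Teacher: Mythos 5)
Your reduction at the end is sound and matches the endgame of the paper: once one knows that $\phi$ (or $\phi^*$) preserves a $2g$-dimensional subspace on which it acts symplectically with trivial $1$-dimensional quotient, the last column is a crossed homomorphism and $[\phi]$ or $\iota([\phi])$ lies in the image of $\sigma$; the bookkeeping with minimal invariant subspaces, Theorem \ref{fh-k}, Theorem \ref{2g-rep} and the vanishing of $H_1(\M{S};\Z)$ is correct. The genuine gap is that the heart of the theorem --- proving that every nontrivial $\phi$ admits such an invariant subspace, simultaneously and compatibly for the whole group --- is only sketched, and the sketch as stated would not work. Your proposed mechanism is to let a common eigenspace of commuting Dehn twist images ``propagate into an $\M{S}$-invariant flag.'' But in this setting, whenever $E_1^a=E_1^b$ for two curves meeting once, Theorem \ref{sru-lemma-4.3} combined with the triviality criterion (Theorem \ref{triviality}) forces $\phi$ to be trivial; so for nontrivial $\phi$ one has $E_1^a\neq E_1^b$, and the sought invariant $2g$-dimensional subspace is \emph{not} a common eigenspace of the generators (already for $\symp\oplus\trivial$ the $1$-eigenspaces of the generator images have trivial common intersection with the hyperplane). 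The paper instead (i) pins down the Jordan form of every $L_a$ to a single $2\times 2$ unipotent block plus the identity (Theorems \ref{eigen_2g+1} and \ref{eigen_dim}), and then (ii) carries out long explicit matrix computations with the braid and commuting relations among a specific generating set (Theorem \ref{sru-lemma-4.7} of Korkmaz plus the new Theorems \ref{prelim_thm:2g+1} and \ref{for_extra_gen}), followed by a consistency (dichotomy) argument showing all generators are simultaneously of upper- or lower-triangular block type. None of this machinery, which is where almost all the work lies, appears in your plan.

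Relatedly, your description of how Morita's computation enters is off target. It is not used to ``control extension classes once partial reducibility is in hand'' or to rule out irreducible $\phi$ at that stage --- that reading is circular. Its actual role is in step (i): in the critical case $\Dm{\ea{1}}=2g-1$, the restriction of $\phi$ to a subsurface group $\M{R}$ is forced into a block form whose off-diagonal columns are crossed homomorphisms $\M{R}\to H_1(\Rb;\C)$, and the property of Morita's generator $k_0$ (namely $k_0(t_l)=0$ for some nonseparating $l$, yielding Theorem \ref{crossed_hom}) shows any such crossed homomorphism sends a Dehn twist $t_d$ to a multiple of $[\tilde d]$; this gives $\rank(\phi(t_d)-I)=1$ and kills the case $\Dm{\ea{1}}=2g-1$, thereby fixing the Jordan form needed for the matrix-theoretic part. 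Without this step, and without the braid/commuting-relation computations for the images of the generators, the reducibility you need is not established, so the proposal does not yet constitute a proof.
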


The assumption $g \geq 7$ for Theorem \ref{clsf-thm} is necessary to apply this theorem.
In particular, the injectivity of $\bar{\sigma}$ holds for $g \geq 1$ as shown below.
The proof of this theorem occupies most of the remaining sections. 
\ppar

We proceed to complete the proof of Theorem \ref{clsf-thm} assuming Theorem \ref{surjectivity-thm}. 
\ppar

Suppose $g \geq 1$ again. We first check $\bar{\sigma}$ is injective. 
Any element of $H^1(\M{S};H_\C)$ is represented by a 
crossed homomorphism $c: \M{S} \to H_\C = \C^{2g}$. We denote its representing class in $H^1(\M{S};H_\C)$ 
and $H^1(\M{S};H_\C)/\C^{\times}$ by $[c]$ and $\overline{[c]}$, respectively.
\par

Let $c_1$, $c_2 : \M{S} \to H_\C = \C^{2g}$ be two crossed homomorphisms which satisfy 
$\bar{\sigma}(\overline{[c_1]}) = \bar{\sigma}(\overline{[c_2]})$. We then have either $[\phi_{c_1}] = [\phi_{c_2}]$ or
$[\phi_{c_1}] = \iota([\phi_{c_2}])$.
\par

In the case $[\phi_{c_1}] = [\phi_{c_2}]$, choose $A \in \GL{2g+1, \C}$ 
so that $\phi_{c_2}(f) = A \phi_{c_1}(f) A^{-1}$ for each $f \in \M{S}$. This equality is equivalent to 
$\phi_{c_2}(f) A = A \phi_{c_1}(f)$, and if we take a block decomposition of $A$ as 
       $\left(\begin{array}{c|c} A_0 & w  \\ \hline s & x \end{array}\right)$ with $A_0$ a $2g \times 2g$ 
matrix and $x \in \C$, it becomes
\begin{equation} \label{cls-1}
 \left(\begin{array}{c|c} \symp(f)A_0 + c_2(f)s & \symp(f)w + c_2(f)x  \\ 
       \hline s & x \end{array}\right) \\
 = \left(\begin{array}{c|c} A_0 \symp(f) & A_0 c_1(f) + w  \\ 
	 \hline s \symp(f) & sc_1(f) + x \end{array}\right)
\end{equation}
for each $f \in \M{S}$.
In view of the lower left block, we have 
$$s = s \symp(f).$$
Namely, $s$ is a $\M{S}$-homomorphism of $H_\C$ to the trivial $\M{S}$-module $\C$. Therefore, 
by the irreducibility of the symplectic representation $\symp$, $s=0$ ({\em c.f.} \/ Remark 
\ref{symp-trivial}). 
We now have $A = \left(\begin{array}{c|c} A_0 & w  \\ \hline 0 & x \end{array}\right)$ and hence 
$A_0 \in \GL{2g, \C}$. Then the upper left block of \eqref{cls-1} becomes $\symp(f) A_0 = A_0 \symp(f)$, 
which means $A_0$ is a $\M{S}$-endomorphism of $H_\C$. Therefore, Schur's lemma and the irreducibility 
of $\symp$ imply $A_0 = z I$ for some $z \in \C^{\times}$. Now the upper right block of \eqref{cls-1} becomes
$$ \symp(f) w + c_2(f) x = z c_1(f) + w.$$
Hence, for each $f \in \M{S}$, we have 
$$c_1(f) = (x/z) c_2(f) + \symp(f) (w/z) - (w/z),$$
which shows $\overline{[c_1]} = \overline{[c_2]}$.
\par

In the case $[\phi_{c_1}] = \iota([\phi_{c_2}])$, the representation $\phi_{c_1}$ has both an invariant 
subspace of dimension $2g$ and an invariant subspace of dimension $1$, since the dual of $\phi_{c_2}$ has
an invariant $1$-dimensional subspace. 
Then the irreducibility of $\symp$ implies that $\phi_{c_1}$ is conjugate to 
the direct sum of $\symp$ and a $1$-dimensional linear representation $\varepsilon^\prime$, 
so that we have $[\phi_{c_1}] = [\symp \oplus \varepsilon^\prime]$. By considering the 
determinants of the representations on both sides of this formula, we see $\varepsilon^\prime$ 
is actually the trivial representation $\varepsilon$, since for each $f \in \M{S}$ it can be 
easily seen $\det{\symp(f)} = 1$,  and then by \eqref{formula_phi_c}, $\det{\phi_{c_1}(f)} = 1$.
Hence we have $[\phi_{c_1}] = [\symp \oplus \varepsilon] = [\phi_0]$. We then have $[c_1] = 0$ by the 
argument of the previous case.
Since $\iota$ is an involution of $X_0$, the same argument implies $[c_2] = 0$. In particular, 
we have $\overline{[c_1]} = \overline{[c_2]}$. This proves the injectivity of $\bar{\sigma}$ for $g \geq 1$.
\ppar

Next, we prove the second part of Theorem \ref{clsf-thm}. We clearly have 
$[\symp \oplus \varepsilon] \in \Fix{\iota}$, since both $\symp$ and $\varepsilon$ are self-dual.
Conversely, let $\phi : \M{S} \to \GL{2g+1, \C}$ be a representative of a fixed point of $\iota$, 
and now suppose $g \geq 7$. Then by Theorem \ref{surjectivity-thm}, 
We have 
$$ [\phi] = [\phi_c] = \iota([\phi_c])$$
for some crossed homomorphism $c: \M{S} \to H_{\C}$. Then the argument above implies again $[c] = 0$, 
and hence $[\phi] = [\rho_0 \oplus \varepsilon]$.
\par
This completes the proof of Theorem \ref{clsf-thm}, assuming Theorem \ref{surjectivity-thm}. \qed
\ppar

\subsection{Outline of paper} \par

The rest of this paper is essentially devoted to the proof of Theorem \ref{surjectivity-thm} and is 
organized as follows. In Section \ref{preliminaries}, we review some fundamental
results we need in later sections. We then start with the analysis of eigenvalue and eigenspace of the 
image $\phi(t_a)$ of the Dehn twist along any nonseparating simple closed curve $a$ under a nontrivial 
$(2g+1)$-dimensional linear representation $\phi : \M{S} \to \GL{2g+1, \C}$. In section \ref{eigenvalue},
we review and slightly generalize results of Korkmaz \cite{sru} to show that $\phi(t_a)$ has a unique 
eigenvalue $1$, and to provide a certain restriction of the dimension of the corresponding eigenspace.
In Section \ref{further_restriction}, we give a further restriction on the eigenspace 
with the assistance of a certain twisted $1$-cohomology group of the mapping class group of a surface 
and see that the 
Jordan form of $\phi(t_a)$ is uniquely determined. We then apply this result to a finite generating set 
of $\M{S}$, which consists of Dehn twists along nonseparating simple closed curves and is given in 
Theorem \ref{generators}, to complete the proof of Theorem \ref{surjectivity-thm}.
An algebraic characterization for images of the generators is presented in Section \ref{image_generators}, 
and is used to prove Theorem \ref{surjectivity-thm} in Section \ref{dichotomy}.
The characterization for the images consists of a previously known theorem by Korkmaz \cite{sru} 
(Theorem \ref{sru-lemma-4.7}) and our new Theorems \ref{prelim_thm:2g+1} and \ref{for_extra_gen}. 
The proofs of the 
latter two theorems, which depend only on the braid and commuting relations among the generators, require
rather long computational argument and are postponed to Section \ref{braid_commuting}. 
Section \ref{straightforward} provides a remark on a 
straightforward proof of Korkmaz's classification theorem \cite{sru} of $2g$-dimensional 
linear representations. 
Finally in Section \ref{appendix}, we discuss a generalization toward higher dimensional linear 
representations.
\par

\subsection*{Notation} \par
For a simple closed curve $a$ on $S$, the Dehn twist along $a$ is denoted by $t_a$.
By a Dehn twist, we always mean the right-handed one. For a linear representation 
$\phi: \M{S} \to \GL{2g+1,\C}$, we denote by $L_a$ the image of the Dehn twist
$\phi(t_a)$. For an eigenvalue $\lambda$ of $L_a$, its eigenspace is denoted by $E_\lambda^a$.
For a square matrix $M$, the multiplicity of $\lambda$ in the characteristic 
polynomial of $M$ is denoted by $\lambda_\#(M)$. We will omit the symbol $M$ and simply write
$\lambda_\#$ if what it means is clear from the context. 
\par

For square matrices $M_1$, $M_2$, \ldots, $M_k$, $\diag{M_1, M_2, \ldots, M_k}$ denotes 
the block diagonal matrix with block diagonals $M_1$, $M_2$, \ldots, $M_k$.
The identity matrix of degree $n$ is denoted by $I_n$.
We consider an element of $\C^m$ as a {\em column} \/ vector.
\ppar

\subsection*{Acknowledgements} \par
The author is grateful to Mustafa Korkmaz for helpful discussions, reading drafts of this paper, 
and kindly sending to the author the revised and combined version of \cite{ldl} and \cite{sru}.
He is also grateful to  Nariya Kawazumi for a comment on the treatment of the twisted $1$-cohomology group
of $\M{S_{g,r}^p}$ for general $(p,r)$, and to Masatoshi Sato for communicating his computation result for  
the twisted $1$-cohomology group.  This work was partially supported by JSPS KAKENHI Grant Number 
19K03498. The author is grateful to the anonymous referee for valuable comments and suggestions 
for clarification.
\par

 \section{Preliminaries} \label{preliminaries}
\par

In this section, we collect the previous results on or related to low dimensional 
complex linear representations of the mapping class group $\M{S}$. We also recall some
generality result on mapping class groups. More technical results will be recalled 
in later sections when necessary.

\subsection{Statements of Franks--Handel and Korkmaz's works} \par

We begin with the precise statement of the classification results by Franks--Handel and Korkmaz.
We note the first homology group $H_1(\M{S}; \Z)$ is the abelianization of the group $\M{S}$.

\begin{theorem}[Franks--Handel \cite{franks-handel} and Korkmaz \cite{ldl}]
 \label{fh-k} Let $g \geq 1$ and $n \leq 2g-1$. Then any linear representation $\phi: \M{S} \to \GL{n, \C}$
 factors through the abelianization map $\M{S} \to H_1(\M{S}; \Z)$.
 In particular,  if $g \geq 3$, then $\phi$ is trivial. ({\em c.f.}\, Theorem \ref{abelianization}.)
\end{theorem}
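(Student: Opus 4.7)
The plan is to reduce the theorem to the single claim that $\phi(t_a)=\phi(t_b)$ for any two nonseparating simple closed curves $a$, $b$ on $S$. Once this is established, Humphries' theorem---which furnishes a finite generating set of $\M{S}$ by Dehn twists along nonseparating simple closed curves (augmented by boundary twists, which are central)---implies that the image of $\phi$ is cyclic and hence abelian. Consequently $\phi$ factors through $H_1(\M{S};\Z)$, and the triviality for $g\geq 3$ follows from Theorem \ref{abelianization}.

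To prove the common-value claim, I would first observe that any two nonseparating simple closed curves lie in a single $\M{S}$-orbit, so the matrices $\phi(t_a)$ and $\phi(t_b)$ are conjugate in $\GL{n,\C}$ and share the same eigenvalues with the same multiplicities. The strategy is then to exploit a chain configuration $c_0, c_1, \ldots, c_{2g}$ of nonseparating simple closed curves on $S$ satisfying $|c_i\cap c_{i+1}|=1$ and $c_i\cap c_j=\emptyset$ for $|i-j|\geq 2$. Setting $A_i:=\phi(t_{c_i})$, the braid relation $A_iA_{i+1}A_i=A_{i+1}A_iA_{i+1}$ together with the commutation $A_iA_j=A_jA_i$ for $|i-j|\geq 2$ yields a representation of the braid group $B_{2g+2}$ into $\GL{n,\C}$ with $n\leq 2g-1$. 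The classical chain relation on $S$ provides an additional identity among the $A_i$.

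The core of the proof---and the step I expect to be the main obstacle---is an eigenvalue/eigenspace count that exploits the bound $n\leq 2g-1$. By the commutation relations, matrices $A_i$ and $A_j$ with $|i-j|\geq 2$ admit a common eigenspace decomposition; the braid relation then imposes a definite correspondence between the eigenspaces of adjacent pairs $A_i$, $A_{i+1}$. If any $A_i$ admitted two or more distinct eigenvalues, propagating this structure along the chain of $2g+1$ conjugate matrices would force an eigenspace decomposition whose total dimension exceeds $2g-1$, yielding a contradiction via a pigeonhole count. Hence each $A_i$ has a single eigenvalue $\lambda$; a further argument---using, for instance, the lantern relation, or ruling out nontrivial Jordan blocks through the commuting relations---should then force $A_i=\lambda I$. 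Since the $A_i$ are pairwise conjugate, the common scalar gives $A_i=A_j$ for all $i$, $j$, completing the key step.

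The main obstacle is thus this dimension-counting argument, which constitutes the technical heart of the Franks--Handel and Korkmaz proofs; making it rigorous requires a careful choice of auxiliary curve configurations and a delicate interplay among the braid, commutation, and lantern relations on $S$. Once the key claim is in hand, the factorization through $H_1(\M{S};\Z)$ and the $g\geq 3$ vanishing are immediate.
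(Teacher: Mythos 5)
There is nothing to compare on the paper's side: Theorem \ref{fh-k} is imported by the paper from Franks--Handel \cite{franks-handel} and Korkmaz \cite{ldl} without proof, so your proposal has to be measured against those arguments (whose method is echoed in Sections \ref{eigenvalue}--\ref{further_restriction} of the paper). Your reduction step is sound as far as it goes: for $g=1$ the statement is immediate since $n\leq 1$ and $\GL{1,\C}$ is abelian, and for $g\geq 2$ the group $\M{S}$ is generated by Dehn twists about nonseparating curves (Theorem \ref{generators}), so if all such twists had the same image under $\phi$, the image would be abelian, $\phi$ would factor through $H_1(\M{S};\Z)$, and Theorem \ref{abelianization} would give triviality for $g\geq 3$.

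The gap is that the common-value claim, which you yourself flag as ``the main obstacle,'' is essentially the whole theorem, and the mechanism you propose for it cannot work as described. Restricted to your chain $c_0,\ldots,c_{2g}$ you only see a representation of the braid group $B_{2g+2}$, and braid plus commuting relations alone do not force a single eigenvalue: the representation $\sigma_i\mapsto \diag{-1,1,\ldots,1}$ (all generators sent to one diagonal matrix, factoring through the sign character) satisfies them in any dimension $\geq 2$, with two distinct eigenvalues, so no pigeonhole count based on those relations can succeed. Worse, every relation you invoke --- braid, commuting, chain, lantern --- is also satisfied by the $2g$-dimensional symplectic representation $\symp$, in which each twist image has the single eigenvalue $1$ yet is \emph{not} scalar (already visible in the $2\times 2$ pair $U$, $\Uh$ of \eqref{notation_U}); hence no manipulation of these relations by themselves can force $A_i=\lambda I$, and the hypothesis $n\leq 2g-1$ must enter through an argument of a genuinely different kind. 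In the actual Korkmaz proof (and in the analogous steps of this paper, cf.\ Theorems \ref{sru-lemma-4.3} and \ref{sru-lemma-4.5}, Theorem \ref{triviality}, and the proof of Lemma \ref{LemA}) that extra input is an induction on genus via subsurfaces: for a subsurface $R$ disjoint from $a$, the eigenspaces of $L_a$ are $\M{R}$-invariant, the inductive triviality statement applied to the resulting flags bounds the codimension of $E_\lambda^a$, equality $E_\lambda^a=E_\lambda^b$ for curves meeting once produces $\M{S}$-invariant subspaces, and only then does the lantern relation pin the eigenvalue to $1$. None of this subsurface induction appears in your sketch, so the dimension-counting step is not a count one can actually carry out from what you have written.
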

\par

\begin{theorem}[Korkmaz \cite{sru}] \label{2g-rep}
 Let $g \geq 3$. Then, any nontrivial representation $\phi: \M{S} \to \GL{2g, \C}$ is conjugate to the 
 symplectic representation $\rho_0$.
\end{theorem}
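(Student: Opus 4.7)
The plan is to analyze $L_a := \phi(t_a)$ for every Dehn twist along a nonseparating simple closed curve $a$, show that $L_a$ is a transvection (unipotent with $\rank(L_a - I) = 1$), and then reconstruct $\symp$ from this data. Since $\M{S}$ acts transitively on isotopy classes of nonseparating simple closed curves, all such $L_a$ are conjugate in $\GL{2g, \C}$ and share a common Jordan form; in particular it suffices to control a single $L_a$ qualitatively.

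First I would pin down the eigenvalues of $L_a$. For any curve $b$ disjoint from $a$, the commutation $t_a t_b = t_b t_a$ forces $L_b$ to preserve every eigenspace $E_\lambda^a$; combined with the braid relation $L_a L_b L_a = L_b L_a L_b$ for curves intersecting once, any eigenvalue $\lambda \neq 1$ would produce a proper invariant subspace on which the restriction of $\phi$ to a large subgroup of $\M{S}$ acts in dimension strictly less than $2g$. By Theorem \ref{fh-k}, for $g \geq 3$ such a restriction must be trivial on that subspace, which, together with the nontriviality of $\phi$, should force $\lambda = 1$. Hence $L_a$ is unipotent, with a common characteristic polynomial $(X-1)^{2g}$.

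Next I would show $\rank(L_a - I) = 1$. If instead $\rank(L_a - I) \geq 2$, then $\ker(L_a - I)$ has dimension at most $2g-2$ and is preserved by the centralizer of $t_a$ in $\M{S}$, which surjects onto the mapping class group of the surface cut along $a$ (of genus $g-1 \geq 2$). A dimension count, combined with Theorem \ref{fh-k} applied to this cut mapping class group, should contradict the existence of a nontrivial invariant subspace of dimension less than $2g$. This forces $L_a v = v + f_a(v) w_a$ for some $w_a \in \C^{2g}$ and $f_a \in (\C^{2g})^*$, unique up to a joint scalar.

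Finally, for a Humphries-type chain of nonseparating curves $c_1, \dots, c_{2g}$ generating $\M{S}$, the commutation and braid relations translate into $f_{c_i}(w_{c_j}) = 0$ when $c_i, c_j$ are disjoint and $f_{c_i}(w_{c_j}) f_{c_j}(w_{c_i}) = 1$ when they intersect once. Since the $w_{c_i}$ must span $\C^{2g}$ (else $\phi$ would factor through a smaller-dimensional representation, again contradicting Theorem \ref{fh-k}), a change of basis carrying $w_{c_i}$ to the standard symplectic basis of $H_\C$ conjugates $\phi$ to $\symp$. The main obstacle will be the transvection step: eliminating higher Jordan-block possibilities for $L_a$ requires a delicate interplay between Theorem \ref{fh-k} and the specific braid and commutation relations among Dehn twists, and this is where the hypothesis $g \geq 3$ is most essential.
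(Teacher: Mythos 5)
Note first that the paper does not prove Theorem \ref{2g-rep}: it is quoted from Korkmaz \cite{sru}, and the paper only supplies, in Section \ref{straightforward}, the one missing normalization step of Korkmaz's argument. Your outline follows the same broad strategy as the known proof (force each $L_a$ to be a transvection, then standardize a generating set), but it leaves unproved exactly the two steps that carry all the difficulty. First, the eigenvalue step: commuting and braid relations together with Theorem \ref{fh-k} do not force $\lambda=1$. Triviality of a restricted action on an invariant subspace says nothing about the scalar $\lambda$; since $\M{S}$ is perfect for $g\geq 3$ you only get $\det L_a=1$, so a priori $\lambda$ could be any $2g$-th root of unity. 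The known mechanism is the lantern relation, used twice: once to evaluate $\phi$ of both sides of the relation on a common eigenvector, giving $\lambda^4=\lambda^3$ (this is the paper's Lemma \ref{LemB}, Korkmaz's Lemma 5.2), and once to exhibit $t_a$ as a product of commutators inside a subsurface group (as in the proof of Lemma \ref{LemA-1}). Your sketch never invokes the lantern relation, and without it the conclusion ``should force $\lambda=1$'' has no proof. Second, the rank-one step: if $\dim E_1^a=2g-2$, the flag $0\subset E_1^a\subset\C^{2g}$ has a piece of dimension $2g-2>2(g-1)-1$, so Theorem \ref{fh-k} applied to the mapping class group of the surface cut along $a$ (genus $g-1$) simply does not apply to it; moreover for $g=3$ that cut genus is $2$, where Theorem \ref{fh-k} only gives abelian image and one needs the refinement of Remark \ref{triviality-abelian}. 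The actual argument requires a second curve $b$ meeting $a$ once and a case split on $E_1^a=E_1^b$ (then Theorem \ref{sru-lemma-4.3} gives $\M{S}$-invariance) versus $E_1^a\neq E_1^b$ (then one works with $E_1^a\cap E_1^b$). You explicitly flag this as ``the main obstacle,'' so the gap is acknowledged but not filled.

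The final step is also glossed over at the very point where care is needed. Even after all twist images are transvections, identifying $\phi$ with $\rho_0$ is not a formality: this is precisely where Korkmaz's original write-up was implicit, and why the paper proves Theorem \ref{sru-lemma-4.7} and the Proposition of Section \ref{straightforward}, conjugating by block-scalar matrices $\diag{I_2,p_2I_2,\ldots,p_gI_2}$ to fix the images of the $t_{c_k}$. Two smaller inaccuracies in your sketch: the braid relation for transvections $I+w_af_a$, $I+w_bf_b$ forces $f_a(w_b)f_b(w_a)=-1$, not $+1$ (test it on $U$ and $\Uh$); and a chain of $2g$ curves does not generate $\M{S}=\M{S_{g,r}^p}$ (even for closed surfaces one needs $2g+1$ Humphries curves, and here the generating set of Theorem \ref{generators} also contains the $c_k$, $e_i$, $f_j$), so the normalization must be carried out on a genuinely larger set of generators, as the paper does.
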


\subsection{Generalities on mapping class groups}\par

 We refer to Farb--Margalit \cite{farb-margalit}
as a basic reference.
\par

\begin{theorem}[\cite{korkmaz-mccarthy}, Theorem 1.2] \label{sru-theorem-3.1}
 Let $g \geq 1$, and let $a$ and $b$ be two nonseparating simple closed curves on $S$. Then, there exists 
 a sequence 
 $$a = a_1, a_2, \ldots, a_k = b$$
 of nonseparating simple closed curves on $S$ such that each consecutive pair $a_i$ and $a_{i+1}$ intersect 
 transversely at a single point.
\end{theorem}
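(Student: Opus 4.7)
The plan is to induct on the geometric intersection number $i(a,b)$, where $a$ and $b$ are placed in minimal position. The base cases $i(a,b)=0$ with $a$ isotopic to $b$ and $i(a,b)=1$ are immediate: take $k=1$ or $k=2$. The two substantive cases are $i(a,b)=0$ with $a$ and $b$ non-isotopic, and $i(a,b) \geq 2$; in each case the strategy is to exhibit a nonseparating simple closed curve $c$ with controlled intersection with both $a$ and $b$, and then recurse.

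For the disjoint non-isotopic case $i(a,b)=0$, I would cut $S$ along $a \cup b$. Since each of $a,b$ is nonseparating, the complement contains a component $\Sigma$ whose boundary meets at least one copy of $a$ and one copy of $b$ (otherwise one of the curves would separate). Choose a properly embedded arc $\alpha$ in $\Sigma$ from an $a$-boundary component to a $b$-boundary component and close it up by short arcs transverse to $a$ and $b$ to obtain a simple closed curve $c$ with $i(a,c)=i(b,c)=1$. The endpoints of $\alpha$ can be chosen on prescribed sides of $a$ and $b$, and using $g \geq 1$ one arranges these choices so that $c$ is nonseparating; then the sequence $a,c,b$ suffices.

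For the inductive step $i(a,b) \geq 2$, I would pick two consecutive intersection points $p,q$ of $a \cap b$ on $b$, and form candidate surgery curves from the subarc $\beta \subset b$ joining $p$ to $q$ together with each of the two subarcs $\alpha_1,\alpha_2 \subset a$ joining $p$ and $q$. After a small perturbation, each of the resulting simple closed curves $c_i$ meets $a$ and $b$ in strictly fewer points than $i(a,b)$. Working in $H_1(S;\Z)$ with suitable orientations, the two classes $[c_1]$ and $[c_2]$ satisfy a relation of the form $[c_1] \pm [c_2] = \pm[a]$, and since $[a] \neq 0$ at least one $c_i$ is non-null-homologous, hence nonseparating. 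Applying the induction hypothesis to $(a,c_i)$ and $(c_i,b)$ and concatenating the resulting chains yields the required sequence.

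The main obstacle is the nonseparation requirement: elementary surgery on simple closed curves readily produces separating output, and the real content of the theorem is that a nonseparating intermediate can always be found. The homological dichotomy in the surgery step and the side-choice in the disjoint case both rest essentially on the hypothesis that $a$ and $b$ are nonseparating together with $g \geq 1$, and form the portion of the argument that requires a careful case analysis.
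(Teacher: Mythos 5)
This statement is not proved in the paper at all: it is quoted verbatim from Korkmaz--McCarthy (their Theorem 1.2), so there is no in-paper argument to compare with. Your proposal is essentially the standard proof of that cited result: induct on $i(a,b)$, and in the step $i(a,b)\geq 2$ surger along a subarc $\beta\subset b$ between two intersections consecutive on $b$, using the two complementary arcs $\alpha_1,\alpha_2$ of $a$. That step is sound: after pushing off one gets $i(c_j,a)\leq 1$ and $i(c_j,b)\leq i(a,b)-1$, and with suitable orientations $[c_1]+[c_2]=[a]\neq 0$ in $H_1(S;\Z)$, so at least one $c_j$ is nonseparating; strong induction on the intersection number then concatenates chains for $(a,c_j)$ and $(c_j,b)$.

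The weak point is the case $i(a,b)=0$ with $a,b$ non-isotopic. As written, the construction fails: a single properly embedded arc $\alpha$ in one component $\Sigma$ of $S\smallsetminus(a\cup b)$ from an $a$-boundary to a $b$-boundary cannot be ``closed up by short arcs transverse to $a$ and $b$'' into a closed curve --- pushing each endpoint across $a$ and $b$ leaves you with an arc whose new endpoints lie on the far sides of $a$ and $b$, and joining them requires a \emph{second} arc disjoint from the first. One must take two arcs: if $S\smallsetminus(a\cup b)$ is connected, both arcs lie in it (one running $a^{+}\to b^{+}$, the other $b^{-}\to a^{-}$, with matching endpoints); if it is disconnected, a short argument using that $a$, $b$ are nonseparating and $S$ is connected shows each of the two components contains exactly one copy of $a$ and one copy of $b$ on its boundary, and one takes one arc in each component. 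This is routine but it is exactly where the hypotheses enter, and it is missing from your sketch. On the other hand, your worry about arranging $c$ to be nonseparating in this case is unnecessary: any simple closed curve with $i(c,a)=1$ is automatically nonseparating, since a separating curve meets every closed curve in an even number of points (the same remark shows the chain produced in the inductive step consists of nonseparating curves once the homological dichotomy is used). With the two-arc construction supplied, the proof goes through and coincides with the standard argument behind the cited theorem.
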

\par

The first homology group $H_1(\M{S};\Z)$ is given by Korkmaz \cite{ldh} as follows.
\par

\begin{theorem} \label{abelianization}
 For $g \geq 1$, 
 \begin{equation*}
   H_1(\M{S};\Z) \cong 
    \begin{cases}
     \Z/12\Z & \text{if $(g, p) = (1, 0)$;} \\
     {\Z}^p & \text{if $g=1$ and $p \geq 1$;} \\
     \Z/10\Z & \text{if $g=2$;} \\
     0 & \text{otherwise.}
    \end{cases}
 \end{equation*}
\end{theorem}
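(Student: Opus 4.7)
The approach is to use a finite presentation of $\M{S}$ by Dehn twists along nonseparating simple closed curves (supplemented, when $p \geq 1$, by boundary Dehn twists, and when $r \geq 1$, by point-pushing maps as needed to obtain the pure mapping class group), and then abelianize. By Theorem \ref{sru-theorem-3.1} together with the change-of-coordinates principle, every pair of Dehn twists along nonseparating simple closed curves is conjugate in $\M{S}$; hence in $H_1(\M{S};\Z)$ they all represent one and the same class $\tau$. Boundary Dehn twists contribute classes $\delta_1,\dots,\delta_p$, while point-pushing maps, being conjugate to commutators of Dehn twists along nonseparating curves, contribute nothing new to the abelianization. Thus $H_1(\M{S};\Z)$ is generated by $\tau,\delta_1,\dots,\delta_p$.

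The plan for the case $g \geq 3$ is to apply the lantern relation. In any genus $\geq 3$ surface one can embed a $4$-holed sphere so that all four boundary curves and all three interior curves are nonseparating in $S$; abelianizing the relation $t_{a_1}t_{a_2}t_{a_3}t_{a_4}=t_{b_1}t_{b_2}t_{b_3}$ yields $4\tau=3\tau$, hence $\tau=0$. A suitable chain relation then expresses each boundary Dehn twist as a product of twists along nonseparating curves (so as a power of $\tau$), killing the $\delta_i$ as well. Hence $H_1(\M{S};\Z)=0$.

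For $g=2$, one cannot realize the lantern with all nonseparating curves, so I would instead combine two chain relations on $S$: a $4$-chain gives $(t_{c_1}\cdots t_{c_4})^{10}=1$, contributing $40\tau=0$; a $5$-chain gives $(t_{c_1}\cdots t_{c_5})^{6}=1$, contributing $30\tau=0$; together $10\tau=0$. To match this upper bound with a lower bound, I would exhibit a surjection from $H_1(\M{S};\Z)$ onto $\Z/10\Z$, for example via the abelianized Wajnryb presentation, or by using the first Mumford--Morita--Miller class modulo $10$. For $(g,p)=(1,0)$, one uses the classical isomorphism $\M{S_{1,0}^0}\cong\mathrm{SL}(2,\Z)\cong \Z/4\Z\ast_{\Z/2\Z}\Z/6\Z$, whose abelianization is $\Z/12\Z$; the case $r \geq 1$ reduces to this via the Birman exact sequence after verifying that the point-pushing classes vanish in the abelianization. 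For $g=1$ and $p\geq 1$, the boundary Dehn twists are central and independent in $H_1$, and a direct calculation from a presentation (such as Gervais's) shows that $\tau$ is a $\Z$-linear combination of the $\delta_i$, so the abelianization collapses onto the free abelian group $\Z^p$ on the boundary classes.

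The main obstacle is the $g=2$ case: the upper bound from chain relations alone is not sharp, and showing that the order of $\tau$ is \emph{exactly} $10$ (and that no independent $2$-torsion hides in the abelianization) requires either a careful analysis of all relations in a complete presentation of $\M{S_2}$ or the construction of an explicit $\Z/10\Z$-valued invariant on $\M{S_2}$. A secondary technical point is the careful bookkeeping for $g=1$ with punctures or boundary, where the Birman exact sequence and the centrality of boundary twists must be combined consistently with the relations inherited from $\mathrm{SL}(2,\Z)$.
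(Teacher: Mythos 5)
The paper itself does not prove this statement; it is quoted from Korkmaz's survey \cite{ldh}, and your sketch follows essentially the same standard route as that source. For $g\geq 3$ your argument is fine: all generators can be taken to be twists about nonseparating curves (indeed Theorem \ref{generators} already gives such a generating set for every $g\geq 2$, so you do not need to adjoin boundary twists or point-pushing maps at all), these twists are all conjugate, so $H_1$ is cyclic on a single class $\tau$, and a lantern embedded with all seven curves nonseparating kills $\tau$.

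There are, however, two genuine gaps. First, in the $g=2$ case your chain relations only yield $10\tau=0$, i.e.\ an upper bound, and you explicitly leave open that the order of $\tau$ is exactly $10$. That is the actual content of the $g=2$ computation: it requires abelianizing a complete presentation of the genus-$2$ mapping class group (Bergau--Mennicke/Birman--Hilden for the closed case, or Wajnryb in general) or exhibiting an explicit surjection onto $\Z/10\Z$, and you do neither; as written, your argument only shows that $H_1$ is a quotient of $\Z/10\Z$ for $g=2$. Second, your treatment of $g=1$, $p\geq 1$ is incorrect as stated. Already for $p=1$, the group $\M{S_{1,0}^1}$ is isomorphic to the $3$-strand braid group, whose abelianization is $\Z$ generated by the class $\tau$ of a nonseparating twist, while the boundary twist maps to $12\tau$. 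Thus $\tau$ is \emph{not} a $\Z$-linear combination of the boundary classes, and $H_1$ does not ``collapse onto the free abelian group on the boundary classes'': it is indeed free abelian of rank $p$, but for $p=1$ the boundary class only generates a subgroup of index $12$. The bookkeeping from a presentation (Gervais, or Labru\`ere--Paris) has to be redone to get the correct generators and relations here. With the $g=2$ lower bound supplied and the $g=1$, $p\geq1$ computation corrected, your outline does reproduce the computation cited by the paper.
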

\ppar

For a simple closed curve $a$ on $S$, the right-handed Dehn twist 
along $a$ is denoted by $t_a$. We collect here some useful relations among Dehn twists.

\begin{theorem} \label{dtwist-rel}
	\begin{enumerate}
	 \item[(1)] For any simple closed curve $a \subset S$ and $f \in \M{S}$, 
		    $$f t_a f^{-1} = t_{f(a)}.$$
	 \item[(2)] (commuting relation) For any two disjoint simple closed curves $a$ and $b \subset S$, 
		    $$t_a t_b = t_b t_a.$$
	 \item[(3)] (braid relation) If two simple closed curves $a$ and $b \subset S$ intersect 
		    transversely at a single point, 
		    $$ t_a t_b t_a = t_b t_a t_b.$$
	 \item[(4)] (lantern relation) Suppose $S = S_{0,0}^4$. Let $a$, $b$, $c$, $d$ be the boundary curves of $S$, and let $x$, $y$, $z$ be the simple closed curves on $S$ depicted in Figure \ref{lantern-fig}. Then 
			$$t_a t_b t_c t_d = t_x t_y t_z.$$
	\end{enumerate}
\end{theorem}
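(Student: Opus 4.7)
The plan is to handle the four relations one at a time, exploiting the fundamental fact that each $t_a$ admits representatives supported in an arbitrarily small annular neighborhood of $a$. No machinery beyond this locality principle and the Alexander method for comparing mapping classes will be needed.

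For part (1), first I would note that if $N$ is an annular neighborhood in which a standard representative of $t_a$ is supported, then $f(N)$ is an annular neighborhood of $f(a)$, and the conjugate $f t_a f^{-1}$ is supported in $f(N)$. Since $f$ carries the standard model of the twist on $N$ to the standard model on $f(N)$ while preserving orientation, the resulting isotopy class is $t_{f(a)}$. For part (2), disjoint annular neighborhoods of $a$ and $b$ yield representatives with disjoint supports, and these commute pointwise. For part (3), the plan is to pass to a closed regular neighborhood $N$ of $a \cup b$; since $a$ and $b$ meet transversely at a single point, $N$ is a compact genus one surface with one boundary component, and both sides of the braid relation lie in the image of the natural homomorphism $\M{N} \to \M{S}$. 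The relation therefore reduces to a computation in $\M{N}$ that I would perform via the Alexander method on a carefully chosen pair of simple arcs in $N$ that together detect mapping classes.

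Part (4), the lantern relation, is the most delicate. Working entirely inside $S = S_{0,0}^4$, I would fix the curves $a$, $b$, $c$, $d$, $x$, $y$, $z$ as in Figure \ref{lantern-fig} and verify $t_a t_b t_c t_d = t_x t_y t_z$ by the Alexander method: choose a system of simple arcs joining pairs of boundary components that together determine any mapping class in $\M{S_{0,0}^4}$, and track their images under each composition to see that the two sides agree. The main obstacle is the bookkeeping in part (4), where following arcs through four successive twists on one side and three on the other requires careful drawing and case analysis; by contrast, parts (1)--(3) are essentially immediate from the definition of the Dehn twist, so the lantern verification is the only step in the theorem that is computationally nontrivial.
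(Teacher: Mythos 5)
The paper does not actually prove Theorem \ref{dtwist-rel}: it is stated as standard background, with Farb--Margalit cited as the basic reference, so there is no in-paper argument to compare against. Your outline is the standard textbook proof and is essentially correct. Parts (1) and (2) follow from locality of the twist exactly as you say. For (3), reducing to a regular neighborhood of $a \cup b$ (a one-holed torus) and checking there is fine, though the quicker standard route is to verify $t_a t_b(a) = b$ and then apply part (1): $(t_a t_b)\, t_a\, (t_a t_b)^{-1} = t_{t_a t_b(a)} = t_b$, which is the braid relation rearranged, avoiding any Alexander-method computation. For (4), the Alexander-method verification on $S_{0,0}^4$ is the argument in the cited reference; the one point you should make explicit is that mapping classes here fix the boundary pointwise, so the boundary twists $t_a, t_b, t_c, t_d$ act trivially on \emph{freely} isotoped arcs, and you must therefore track arcs up to isotopy fixing their endpoints on $\partial S$ (or otherwise account separately for boundary-parallel twists); this is precisely what makes the left-hand side of the lantern relation detectable, and omitting it is the only place your plan could silently go wrong.
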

\par

\begin{figure}
		\centering
		\includegraphics[scale=0.65]{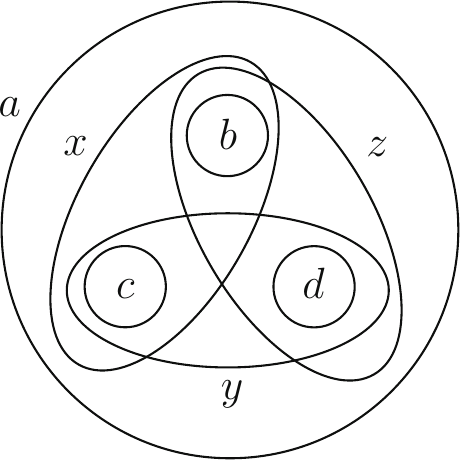}
		\caption{Lantern relation}
		\label{lantern-fig}
\end{figure}
\ppar

The following explicit generating set of $\M{S}$ can be derived from the Lickorish generators of $\M{\Sb}$
by repeated applications of the Birman exact sequence as well as the star relation, for instance 
({\em c.f.}\/ \cite{farb-margalit}):
\par

\begin{theorem} \label{generators}
	Let $g \geq 2$. The group $\M{S}$ is generated by the Dehn twists along the following 
	nonseparating simple closed curves depicted in Figures \ref{fig_gen_1} and \ref{fig_gen_2}:
	\begin{itemize}
		\item $a_1$, $b_1$, $a_2$, $b_2$, \ldots, $a_g$, $b_g$;
				 $c_1$, $c_2$, \ldots, $c_{g-1}$;
		\item $e_1$, $e_2$, \ldots, $e_p$;
				 $f_1$, $f_2$, \ldots, $f_r$.	
	\end{itemize}
\end{theorem}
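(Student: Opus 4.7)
The plan is to proceed by induction on the pair $(p,r)$, reducing to Lickorish's classical generation theorem for the closed surface $\Sb$ of the same genus $g$.

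The base case $(p,r)=(0,0)$ is Lickorish's theorem itself: since $g \geq 2$, $\M{\Sb}$ is generated by the Dehn twists along $a_1,b_1,\ldots,a_g,b_g,c_1,\ldots,c_{g-1}$, which is exactly the asserted generating set when $S = \Sb$. Thus I only need to carry a known generating set across one inductive step of two kinds.

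To reduce the number of punctures from $r$ to $r-1$, I would invoke the Birman exact sequence
\begin{equation*}
 1 \longrightarrow \pi_1\!\left(S_{g,r-1}^p,\,*\right) \longrightarrow \M{S_{g,r}^p} \longrightarrow \M{S_{g,r-1}^p} \longrightarrow 1,
\end{equation*}
in which the right-hand map forgets the new puncture located at $*$. Lifting the inductive generators of $\M{S_{g,r-1}^p}$ to the Dehn twists along the same-named curves on $S_{g,r}^p$, I would then need to verify that the kernel $\pi_1(S_{g,r-1}^p,*)$ is contained in the subgroup generated by those lifts together with $t_{f_r}$. For each standard generator of $\pi_1$, the corresponding point-push map decomposes as a bounding-pair expression $t_\alpha t_\beta^{-1}$ for the two boundary components $\alpha,\beta$ of a regular neighborhood of a representing simple loop; these can be chosen so that one of $\alpha,\beta$ is isotopic to $f_r$ and the other to a curve already available among the listed generators.

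To reduce the number of boundary components from $p$ to $p-1$, I would use the capping exact sequence obtained by gluing a disk onto the $p$-th boundary,
\begin{equation*}
 1 \longrightarrow \langle t_{e_p} \rangle \longrightarrow \M{S_{g,r}^p} \longrightarrow \M{S_{g,r}^{p-1}} \longrightarrow 1,
\end{equation*}
where $e_p$ denotes a simple closed curve parallel to the capped boundary; here the assumption $g \geq 2$ ensures that $t_{e_p}$ is of infinite order and generates the kernel. Lifting the inductive generators and adjoining $t_{e_p}$ produces a generating set for $\M{S_{g,r}^p}$. The star relation is used at precisely the points where a chosen lift of a previously listed Dehn twist differs from the canonical $a_i,b_i,c_j,e_i,f_k$ by a product of boundary-parallel twists, and allows one to rewrite the discrepancy in terms of the listed generators. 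The main obstacle I expect is not generation \emph{per se}, which drops out cleanly from the two exact sequences, but rather the careful bookkeeping required to identify, at each stage of the induction, the lifted Lickorish curves with the specific isotopy classes $a_i,b_i,c_j,e_i,f_k$ drawn in Figures \ref{fig_gen_1} and \ref{fig_gen_2}; this involves tracking basepoints through the Birman sequence and the handedness convention for the bounding-pair decompositions used above.
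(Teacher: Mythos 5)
Your overall route (Lickorish's theorem for $\Sb$, then an induction over punctures and boundary components via Birman-type exact sequences, invoking the star relation) is the same one the paper sketches with a reference to Farb--Margalit, but your boundary-capping step contains a genuine error. The sequence $1 \to \langle t_{e_p} \rangle \to \M{S_{g,r}^{p}} \to \M{S_{g,r}^{p-1}} \to 1$ is not correct: if the $p$-th boundary component is capped by a disk carrying a marked point, the kernel is indeed infinite cyclic, but it is generated by the twist about a curve \emph{parallel to that boundary component}, and the quotient is $\M{S_{g,r+1}^{p-1}}$ rather than $\M{S_{g,r}^{p-1}}$; if instead the capping disk is unmarked, the kernel is isomorphic to $\pi_1$ of the unit tangent bundle of the capped surface and is not cyclic at all. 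In either version the kernel element you must account for is a boundary-parallel twist, i.e.\ a twist about a \emph{separating} curve, which is not among the listed curves -- in particular it is not $t_{e_p}$, since the theorem's $e_p$ is nonseparating, exactly as the statement requires. So adjoining $t_{e_p}$ to the lifted generators does not close the induction. Repairing this is precisely where the star relation does real work: it expresses the boundary-parallel twist as a word in twists about nonseparating curves that lie among (or in the orbit of) the listed generators. In your write-up the star relation is relegated to ``bookkeeping of lifts,'' which misses its actual role, and the main content of the boundary step is skipped.

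There is also a smaller gap in the puncture step. For a full generating set of $\pi_1(S_{g,r-1}^{p},*)$ it is not the case that every point-pushing map decomposes as $t_\alpha t_\beta^{-1}$ with one of $\alpha,\beta$ isotopic to $f_r$ and the other to an already-listed curve; as stated this is unjustified. The standard fix is to choose simple representative loops so that $\alpha$ and $\beta$ are nonseparating, and then to show $t_\alpha, t_\beta$ lie in the subgroup $G$ generated by the listed twists by producing an element of $G$ carrying a listed curve to $\alpha$ (respectively $\beta$), via the change-of-coordinates principle applied \emph{inside} $G$, using the inductive surjection of $G$ onto the mapping class group already handled. Without an argument of this kind, containment of the Birman kernel in $G$ is not established.
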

\par

We remark that these generators are actually excessive, but are convenient for our purpose.
\par

\begin{figure}
		\centering
		\includegraphics[scale=1.0]{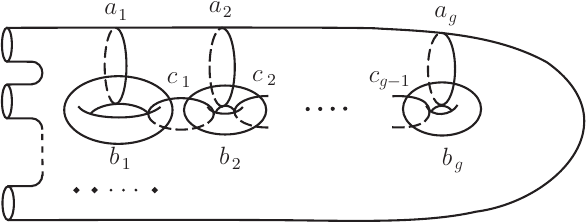}
		\caption{Explicit generators (1)}
		\label{fig_gen_1}
\end{figure}
\begin{figure}
		\centering
		\includegraphics[scale=1.0]{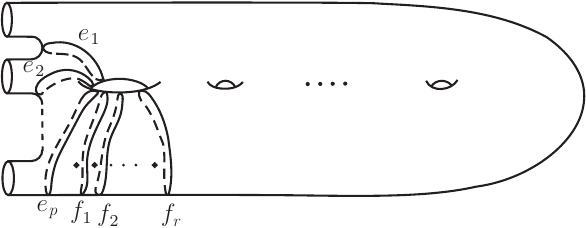}
		\caption{Explicit generators (2)}
		\label{fig_gen_2}
\end{figure}
\ppar

\subsection{Triviality of representation}\par 

The following triviality criterion for a linear representation was proved by Korkmaz \cite{sru} using 
Theorems \ref{fh-k} and \ref{abelianization} together with the nilpotency of the group of upper 
triangular unipotent matrices.

\begin{theorem}[\cite{sru}, Lemma 4.8] \label{triviality}
	Let $g \geq 3$, and $\psi: \M{S} \to \GL{m, \C}$ be an arbitrary homomorphism with 
	$m \geq 1$. Suppose there exists a flag 
	$$ 0 = W_0 \subset W_1 \subset W_2 \subset \cdots \subset W_k = \C^m$$
	which is $\M{S}$-invariant under the linear action $\psi$ such that 
	$\dim{W_{i+1}/W_i} \leq 2g-1$ for \/ $0 \leq i \leq k-1$. Then $\psi$ is trivial.
\end{theorem}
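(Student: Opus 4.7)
The plan is to reduce the statement to the vanishing of each induced representation on the successive quotients of the flag, then exploit that the image of $\psi$ sits inside an upper unipotent subgroup and that $\M{S}$ is perfect for $g \geq 3$.

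First, I would consider, for each $0 \leq i \leq k-1$, the linear representation
$$\bar{\psi}_i : \M{S} \to \GL{W_{i+1}/W_i}$$
induced by $\psi$ on the successive quotients via the $\M{S}$-invariance of the $W_i$. By hypothesis $\dim(W_{i+1}/W_i) \leq 2g-1$, so Theorem \ref{fh-k} tells us that $\bar{\psi}_i$ factors through the abelianization $H_1(\M{S};\Z)$. Since $g \geq 3$, Theorem \ref{abelianization} yields $H_1(\M{S};\Z) = 0$, whence each $\bar{\psi}_i$ is the trivial representation.

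Next, after choosing a basis of $\C^m$ adapted to the filtration $\{W_i\}$, the triviality of every $\bar{\psi}_i$ translates to the statement that, for each $f \in \M{S}$, the matrix $\psi(f)$ is block upper triangular with every diagonal block equal to an identity matrix. Consequently $\psi(\M{S})$ is contained in the group $U \subset \GL{m, \C}$ of upper triangular unipotent matrices, which is nilpotent of class at most $m-1$.

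Finally, I would invoke the general fact that a perfect group admits no nontrivial homomorphism into a nilpotent group. The identity $\M{S} = [\M{S},\M{S}]$ (again by Theorem \ref{abelianization} with $g \geq 3$), iterated along the lower central series of $U$, forces $\psi(\M{S})$ to lie in every term of that series; since $U$ is nilpotent, the series terminates at the identity, and therefore $\psi$ must be trivial. I do not anticipate a substantial obstacle: essentially all the content is packaged in Theorems \ref{fh-k} and \ref{abelianization}, and the remainder is standard group theory. The only point requiring minor care is to choose the basis compatibly with the flag so that the reduction to block upper triangular unipotent form is manifest.
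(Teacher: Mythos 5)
Your argument is correct and coincides with the proof the paper attributes to Korkmaz: triviality of the quotient representations via Theorems \ref{fh-k} and \ref{abelianization}, reduction to upper triangular unipotent matrices, and the fact that the perfect group $\M{S}$ (for $g \geq 3$) has no nontrivial homomorphism to a nilpotent group. No gaps to report.
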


\begin{remark} \label{triviality-abelian}
 In case of $g=2$, Theorem \ref{triviality} does not hold true, but becomes true if the 
 consequence is changed to that the image of $\psi$ is an abelian group ({\em ibid}).
 This is due to the less known fact that the commutator subgroup of $\M{S}$ is perfect
 for $g \geq 2$ (\cite[Theorem 4.2]{korkmaz-mccarthy}).
\end{remark}

\begin{remark} \label{symp-trivial}
 Theorem \ref{triviality} together with Remark \ref{triviality-abelian} 
 implies immediately that the symplectic representation $\rho_0$ is irreducible, for $g \geq 2$.
\end{remark}

 \section{Eigenvalue and eigenspace} \label{eigenvalue}
\par

In this section, following the argument of Korkmaz \cite{sru}, we determine the eigenvalue 
of the image of the Dehn twist along a nonseparating simple closed curve under any $(2g+1)$-dimensional
representation, and give a lower bound for the dimension of the corresponding eigenspace.
\par
In general, it is known that Theorem \ref{abelianization} 
implies such an eigenvalue under {\em any dimensional} representation must be a root of unity 
if $g \geq 3$ ({\em cf}\/ \cite{aramayona-souto} and \cite{button}).
In low dimensional case, however, the eigenvalue suffers from further restriction while 
the additional assumption on $g$ is necessary.

We begin with recalling necessary results by Korkmaz.
\par

\subsection{General results from \cite{sru}} \par 

Let $\phi: \M{S} \to \GL{m, \C}$ be a homomorphism. For a simple closed curve $a \subset S$,
the image $\phi(t_a)$ is denoted by $L_a$. For an eigenvalue $\lambda$ of $L_a$, the 
eigenspace of $L_a$ corresponding to $\lambda$ is denoted by $E_\lambda^a$.

\begin{theorem}[\cite{ldl}, Lemma 4.2] \label{ldl-lemma-4.2}
		Let $a$, $b$, $c$, $d$ be nonseparating simple closed curves on $S$ such 
		that $f(c) = a$ and $f(d) = b$ for some $f \in \M{S}$. Suppose $\lambda$ is an
		eigenvalue of $L_a = \phi(t_a)$. Then $E_\lambda^a = E_\lambda^b$ if and only if
		$E_\lambda^c = E_\lambda^d$.
\end{theorem}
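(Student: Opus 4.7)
The plan is to reduce the statement to the invertible change of basis given by $\phi(f)$. Setting $F := \phi(f)$, the hypothesis $f(c) = a$ together with Theorem \ref{dtwist-rel}(1) yields $t_a = f t_c f^{-1}$, so applying $\phi$ gives the matrix identity $L_a = F L_c F^{-1}$; likewise $L_b = F L_d F^{-1}$ from $f(d)=b$.

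From here the key observation is that conjugation by an invertible matrix permutes eigenspaces in a compatible way. Explicitly, if $v \in E_\lambda^c$ then
\[
L_a (Fv) = F L_c F^{-1}(Fv) = F(L_c v) = \lambda (Fv),
\]
so $F(E_\lambda^c) \subseteq E_\lambda^a$; applying the symmetric inclusion to $L_c = F^{-1} L_a F$ gives the reverse containment and hence the equality $F(E_\lambda^c) = E_\lambda^a$. The same reasoning yields $F(E_\lambda^d) = E_\lambda^b$. In particular, since $\lambda$ is an eigenvalue of $L_a$, it is also an eigenvalue of the conjugate matrix $L_c$ with the same algebraic and geometric multiplicities, so all four eigenspaces are genuine nonzero subspaces and the equivalence is meaningful.

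Finally, because $F \in \GL{m,\C}$ is invertible, it carries the lattice of subspaces of $\C^m$ bijectively to itself, and hence preserves equality of subspaces. Thus
\[
E_\lambda^c = E_\lambda^d \iff F(E_\lambda^c) = F(E_\lambda^d) \iff E_\lambda^a = E_\lambda^b,
\]
which is exactly the asserted equivalence. There is no substantive obstacle here: the content of the lemma is really just that the eigenspace data are equivariant under conjugation by $\phi(f)$, so packaging the simultaneous conjugacies $f(c)=a$ and $f(d)=b$ by a single element $f$ automatically transports any coincidence relation between $E_\lambda^c$ and $E_\lambda^d$ to the corresponding relation between $E_\lambda^a$ and $E_\lambda^b$, and vice versa.
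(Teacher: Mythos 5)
Your proof is correct and follows exactly the route the paper indicates: use Theorem \ref{dtwist-rel}(1) to get $L_a = F L_c F^{-1}$ and $L_b = F L_d F^{-1}$ with $F = \phi(f)$, then the fact that conjugation carries eigenspaces to eigenspaces, so the invertible $F$ transports the equality $E_\lambda^c = E_\lambda^d$ to $E_\lambda^a = E_\lambda^b$ and back. No discrepancies with the paper's argument.
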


The proof follows from Theorem \ref{dtwist-rel} (1) and the basic fact that the eigenspace 
of the conjugation of a linear mapping by a linear isomorphism coincides with the image of 
the eigenspace of the linear mapping under the linear isomorphism.
\par 

The next result follows from Theorems \ref{sru-theorem-3.1} and \ref{ldl-lemma-4.2} together
with the fact that $\M{S}$ is generated by Dehn twists along nonseparating simple closed
curves if $g \geq 2$.

\begin{theorem}[\cite{sru}, Lemma 4.3] \label{sru-lemma-4.3}
		Let $g \geq 2$ and $\phi: \M{S} \to \GL{m, \C}$ be a homomorphism.
		Suppose that $a$ and $b$ are two nonseparating simple closed curves
		on $S$ which intersect transversely at a single point. If $E_\lambda^a = E_\lambda^b$ 
		for an eigenvalue $\lambda$ of $L_a = \phi(t_a)$, then $E_\lambda^a$ is invariant 
		under the $\M{S}$-action via $\phi$.
\end{theorem}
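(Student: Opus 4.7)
The plan is to show that $E_\lambda^c = E_\lambda^a$ for every nonseparating simple closed curve $c \subset S$. Once this is established, each $L_c$ acts as multiplication by $\lambda$ on $E_\lambda^c = E_\lambda^a$, and so preserves $E_\lambda^a$. Because the Dehn twists along nonseparating simple closed curves generate $\M{S}$ when $g \geq 2$ (a standard consequence of the Lickorish generating set, and a fortiori of Theorem \ref{generators}), the subspace $E_\lambda^a$ will then be $\M{S}$-invariant under $\phi$.

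To establish $E_\lambda^c = E_\lambda^a$, I would first apply Theorem \ref{sru-theorem-3.1} to obtain a sequence $a = a_1, a_2, \ldots, a_k = c$ of nonseparating simple closed curves in which each consecutive pair $a_i$, $a_{i+1}$ meets transversely at exactly one point, and then propagate the eigenspace equality along this chain, starting from the hypothesis $E_\lambda^a = E_\lambda^b$.

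The propagation step is powered by Theorem \ref{ldl-lemma-4.2} combined with the change-of-coordinates principle for surfaces: for each $i$, both $(a, b)$ and $(a_i, a_{i+1})$ are ordered pairs of nonseparating simple closed curves intersecting transversely in a single point, so there is $f_i \in \M{S}$ with $f_i(a_i) = a$ and $f_i(a_{i+1}) = b$. Taking $c = a_i$ and $d = a_{i+1}$ in Theorem \ref{ldl-lemma-4.2} then transfers $E_\lambda^a = E_\lambda^b$ to $E_\lambda^{a_i} = E_\lambda^{a_{i+1}}$, and a telescoping induction yields $E_\lambda^a = E_\lambda^c$. The argument is really assembled from pieces already on the table: the combinatorial connectedness supplied by Theorem \ref{sru-theorem-3.1} and the algebraic transport supplied by Theorem \ref{ldl-lemma-4.2}, so I do not anticipate any genuine obstacle beyond verifying that the realizing map $f_i$ can be chosen orientation-preserving, which follows because any two such configurations admit homeomorphic tubular neighborhoods compatible with orientation.
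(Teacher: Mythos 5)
Your argument is correct and is essentially the paper's own route: the paper derives this statement precisely from Theorem \ref{sru-theorem-3.1} (the chain of once-intersecting nonseparating curves), Theorem \ref{ldl-lemma-4.2} together with the change-of-coordinates principle, and the fact that Dehn twists along nonseparating simple closed curves generate $\M{S}$ for $g \geq 2$. Nothing further is needed.
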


A careful analysis using Theorem \ref{fh-k} and some linear algebra implies the following.

\begin{theorem}[\cite{sru}, Lemma 4.5]  \label{sru-lemma-4.5}
	Let $g \geq 3$ and $\phi: \M{S} \to \GL{m, \C}$ be a homomorphism. Suppose
	$a$ is a nonseparating simple closed curve on $S$. If $\mu$ is an eigenvalue 
	of $L_a = \phi(t_a)$ with multiplicity $\mu_\# \leq 2g-3$, then $\mu = 1$ and 
	the dimension of $E_\mu^a$ coincides with $\mu_\#$.
\end{theorem}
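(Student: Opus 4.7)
The plan is to pin down $\mu$ and $\dim E_\mu^a$ simultaneously by showing that the generalized eigenspace $V_\mu^a := \ker(L_a - \mu I)^m \subseteq \C^m$, which has dimension exactly $\mu_\#$, is invariant under the full representation $\phi(\M{S})$. Once this invariance is in hand, the restriction $\phi|_{V_\mu^a} : \M{S} \to \GL{\mu_\#, \C}$ is a representation of dimension $\mu_\# \leq 2g-3 < 2g-1$, so Theorem \ref{fh-k} forces it to be trivial (using $g \geq 3$). In particular $L_a|_{V_\mu^a}$ is the identity, which simultaneously yields $\mu = 1$ and $V_\mu^a = \ker(L_a - \mu I) = E_\mu^a$, giving $\dim E_\mu^a = \mu_\#$ as claimed.

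To establish the $\M{S}$-invariance of $V_\mu^a$, I follow the strategy behind Theorems \ref{ldl-lemma-4.2} and \ref{sru-lemma-4.3}, but work with generalized eigenspaces in place of ordinary eigenspaces (the same proofs adapt verbatim). The conjugation identity $\phi(f) L_a \phi(f)^{-1} = L_{f(a)}$ gives $\phi(f) V_\mu^a = V_\mu^{f(a)}$ for any $f \in \M{S}$. Combined with Theorem \ref{sru-theorem-3.1} (producing a chain of nonseparating simple closed curves between any two, with consecutive pairs meeting transversely once) and the fact that $\M{S}$ is generated by Dehn twists along nonseparating simple closed curves for $g \geq 2$, the whole $\M{S}$-invariance reduces to the single local equality $V_\mu^a = V_\mu^b$ for some nonseparating simple closed curve $b$ meeting $a$ transversely at a single point.

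The crux is therefore this local equality. The braid relation $L_a L_b L_a = L_b L_a L_b$ (Theorem \ref{dtwist-rel}(3)) rewrites as $(L_a L_b) L_a (L_a L_b)^{-1} = L_b$, so $L_a L_b$ sends $V_\mu^a$ isomorphically onto $V_\mu^b$, giving $\dim V_\mu^a = \dim V_\mu^b = \mu_\#$; the genuine set-theoretic equality is the delicate point. My plan is to couple this braid identity with the commuting relations (Theorem \ref{dtwist-rel}(2)): since $g \geq 3$, one can choose mutually disjoint nonseparating simple closed curves $c_1, \ldots, c_{g-1}$ on $S$ that are also disjoint from both $a$ and $b$, and each $L_{c_i}$ commutes with $L_a$, with $L_b$, and with the other $L_{c_j}$. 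These $L_{c_i}$ jointly preserve both $V_\mu^a$ and $V_\mu^b$, and each $L_{c_i}$ has the same characteristic polynomial as $L_a$ (being conjugate to $L_a$ in $\M{S}$); a simultaneous-eigenvalue analysis on the sum $V_\mu^a + V_\mu^b$ (dimension at most $2\mu_\#$), together with the constraints imposed by these many pairwise-commuting conjugates, is designed to collapse the sum to $V_\mu^a = V_\mu^b$. The main obstacle is precisely this final eigenvalue/dimension count, and the slack afforded by the hypothesis $\mu_\# \leq 2g-3$ is exactly what places the resulting invariant subspace inside the Franks--Handel--Korkmaz trivialization range $\leq 2g-1$; once $V_\mu^a = V_\mu^b$ is secured, the rest of the argument is routine propagation.
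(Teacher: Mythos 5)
The decisive step of your plan --- the ``local equality'' $V_\mu^a = V_\mu^b$ for a pair of curves meeting once --- is precisely the point where you offer no argument, and it is not a removable technicality: it amounts to asserting that the generalized $\mu$-eigenspace of $L_a$ is invariant under all of $\M{S}$, i.e.\ is a subrepresentation, which is a much stronger assertion than the theorem itself. The mechanism you sketch (disjoint curves $c_1,\dots,c_{g-1}$ whose images commute with $L_a$ and $L_b$ and preserve both spaces, plus an ``eigenvalue/dimension count'' on $V_\mu^a+V_\mu^b$) has no way to force the two subspaces to coincide: commuting operators that preserve two subspaces impose no relation between those subspaces, and since $m$ is arbitrary there is no control on $\Dm{\C^m/(V_\mu^a+V_\mu^b)}$, so neither Theorem \ref{fh-k} nor Theorem \ref{triviality} can be applied to the ambient space or to any quotient to make the sum ``collapse.'' Your own wording (``is designed to collapse,'' ``the main obstacle is precisely this final count'') concedes the crux is unproven; as written, the proposal reduces the statement to an unproved claim that is at least as hard as the statement.

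A symptom of the misdirection is your reading of the hypothesis: if all one needed were $\M{S}$-invariance followed by Theorem \ref{fh-k}, the bound $2g-1$ would suffice and the theorem would be stated with it. The bound $2g-3=2(g-1)-1$ is the Franks--Handel--Korkmaz range for the genus $g-1$ subsurface $S_a$ obtained by removing a neighbourhood of $a$, and that is where the proof actually lives. The present paper only quotes this result from Korkmaz, but the argument it alludes to (``a careful analysis using Theorem \ref{fh-k} and some linear algebra''), and the analogous arguments it does carry out in Lemmas \ref{LemA} and \ref{LemA-1}, go through the commutant rather than through global invariance: every element of $\M{S_a}$ commutes with $t_a$, so $W:=\ker\left(L_a-\mu I\right)^{m}$, of dimension $\mu_\#\leq 2(g-1)-1$, is automatically $\M{S_a}$-invariant --- no comparison of the spaces attached to $a$ and $b$ is needed. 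Theorem \ref{fh-k}, applied to $\M{S_a}$, makes its action on $W$ factor through $H_1(\M{S_a};\Z)$; and since $t_a$ coincides with the twist along a parallel copy $a'$ of $a$ contained in $S_a$, a lantern relation in $S_a$ writes that twist as $(t_xt_b^{-1})(t_yt_c^{-1})(t_zt_d^{-1})$, a product which dies in the abelianization (for genus $g-1\geq 3$ one can instead just use $H_1(\M{S_a};\Z)=0$). Hence $L_a|_W=I$, which yields both conclusions simultaneously: $\mu=1$ and $\eigensp{\mu}{a}=W$, so $\Dm{\eigensp{\mu}{a}}=\mu_\#$. I recommend abandoning the global-invariance route and arguing along these lines.
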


This theorem immediately implies the following:

\begin{theorem}[{\em c.f.} \cite{sru}, Corollary 4.6]\label{sru-cor-4.6}
 Let $g \geq 3$ and $\phi: \M{S} \to \GL{m,\C}$ be a homomorphism. 
 If $m \leq 4g-4$, then $L_a = \phi(t_a)$ has at most two eigenvalues.
\end{theorem}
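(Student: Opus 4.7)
The plan is to deduce this quickly from Theorem \ref{sru-lemma-4.5} by a counting argument on multiplicities. Suppose for contradiction that $L_a = \phi(t_a)$ has at least three distinct eigenvalues $\lambda_1$, $\lambda_2$, $\lambda_3$, and write $m_i := (\lambda_i)_\#$ for their multiplicities in the characteristic polynomial of $L_a$. Then $m_i \geq 1$ for $i=1,2,3$, and
$$m_1 + m_2 + m_3 \leq m \leq 4g-4.$$

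If two of the multiplicities, say $m_1$ and $m_2$, both satisfied $m_1, m_2 \geq 2g-2$, then $m_1 + m_2 \geq 4g-4 \geq m$, forcing $m_3 \leq 0$, which contradicts $m_3 \geq 1$. Hence at least two of $m_1$, $m_2$, $m_3$ are bounded above by $2g-3$.

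Now I apply Theorem \ref{sru-lemma-4.5} to each such eigenvalue: any eigenvalue of $L_a$ whose multiplicity is at most $2g-3$ must be equal to $1$. Consequently at least two of the $\lambda_i$ are forced to equal $1$, which contradicts the assumption that $\lambda_1$, $\lambda_2$, $\lambda_3$ are pairwise distinct. This contradiction shows $L_a$ has at most two eigenvalues, completing the proof.

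There is no real obstacle here: the entire content is packaged inside Theorem \ref{sru-lemma-4.5}, and the only verification required is the pigeonhole-style observation that among three positive multiplicities summing to at most $4g-4$, at most one can be as large as $2g-2$.
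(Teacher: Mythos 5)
Your argument is correct and is exactly the argument the paper intends: the paper derives Theorem \ref{sru-cor-4.6} immediately from Theorem \ref{sru-lemma-4.5}, via the same pigeonhole observation that among three positive multiplicities summing to at most $4g-4$, at least two are bounded by $2g-3$, forcing two distinct eigenvalues to equal $1$.
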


\subsection{The case of dimension $2g+1$} \par

We now consider $(2g+1)$-dimensional representations.
The following is an analogue of Korkmaz \cite[Lemma 5.1]{sru}.
\begin{lemma} \label{LemA}
 Let $g \geq 5$, and $\phi: \M{S} \to \GL{2g+1, \C}$ be an arbitrary homomorphism.
 Let $a$ be a nonseparating simple closed curve on $S$, and $\lambda$ be an eigenvalue of
 $L_a = \phi(t_a)$. 
 \par
 Let $\lambda_\#$ denote the multiplicity of $\lambda$ in the characteristic polynomial
 of $L_a$.
 If $\lambda_\# \geq 4$, then  $\Dm{\ela} \geq 2g-1$; in particular,  it holds
 $\lambda_\# \geq 2g-1$.
\end{lemma}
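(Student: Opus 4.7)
The plan is to adapt Korkmaz's strategy for the $2g$-dimensional case (\cite{sru}, Lemma 5.1) to our setting. Three preliminary facts drive everything: (i) $\det L_a = 1$, since $g \geq 5 \geq 3$ makes $H_1(\M{S};\Z) = 0$ by Theorem \ref{abelianization}, so $t_a$ lies in the commutator subgroup; (ii) $L_a$ has at most two distinct eigenvalues, namely $\lambda$ and possibly $\mu$ with $\mu_\# = 2g+1 - \lambda_\#$, by Theorem \ref{sru-cor-4.6} since $2g+1 \leq 4g-4$ for $g \geq 5$; and (iii) any eigenvalue of algebraic multiplicity at most $2g-3$ must equal $1$ and have geometric multiplicity equal to its algebraic one, by Theorem \ref{sru-lemma-4.5}.

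I would first prove $\lambda_\# \geq 2g-1$. If $4 \leq \lambda_\# \leq 2g-3$, then $\mu_\# = 2g+1-\lambda_\#$ also lies in $[4, 2g-3]$, so fact (iii) applied to both eigenvalues forces $\lambda = \mu = 1$, a contradiction. The harder case $\lambda_\# = 2g-2$ gives $\mu_\# = 3$ and, via (iii), $\mu = 1$, $\Dm{\ea{1}} = 3$, and $\lambda \neq 1$. To rule this case out I would show that $\ela$ is $\M{S}$-invariant: since $\Dm{\ela} \leq 2g-2 < 2g$, Theorem \ref{fh-k} would then trivialize the induced representation of $\M{S}$ on $\ela$ (the target abelianization being trivial), forcing $L_a|_{\ela} = I$ and hence $\lambda = 1$, contradicting $\lambda \neq \mu$. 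By Theorem \ref{sru-lemma-4.3}, this invariance follows if I can produce a nonseparating simple closed curve $b$ meeting $a$ transversely in one point with $\ela = \elb$, which I expect to extract from the braid relation $L_a L_b L_a = L_b L_a L_b$ (Theorem \ref{dtwist-rel}(3)) together with a dimension count in the ambient $\C^{2g+1}$.

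Once $\lambda_\# \geq 2g-1$ is in hand, the same invariance-plus-triviality mechanism upgrades algebraic to geometric multiplicity. If $\Dm{\ela} \leq 2g-2$, then $\ela$ becomes, after using Theorem \ref{sru-lemma-4.3}, a proper $\M{S}$-invariant subspace of dimension $< 2g$, and Theorem \ref{fh-k} again forces $\lambda = 1$; combined with $\det L_a = 1$ and fact (iii) for the $\mu$-part (now $\mu_\# \leq 2$), this constrains $L_a$ to a form whose invariant flag of short quotients is handled by the triviality criterion Theorem \ref{triviality}, yielding a contradiction with $\phi(t_a)$ being the image of a nontrivial generator.

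The central obstacle will be the braid-relation step $\ela = \elb$ in the excluded case $\lambda_\# = 2g-2$: the naive intersection bound
\[
\Dm{\ela \cap \elb} \geq 2(2g-2) - (2g+1) = 2g-5
\]
is positive for $g \geq 5$ but does not on its own equate the two subspaces. Overcoming this will presumably require exploiting the braid relation together with an auxiliary nonseparating curve disjoint from one of $a$, $b$, so as to transfer $L_a$-stability of $\ela$ into $L_b$-stability and conclude equality, in the spirit of \cite{sru}.
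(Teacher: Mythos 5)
Your proposal has a genuine gap at exactly the point you flag, and it is not a technicality that can be patched in the way you suggest. Both your exclusion of the case $\lambda_\# = 2g-2$ and your final upgrade from algebraic to geometric multiplicity rest on invoking Theorem \ref{sru-lemma-4.3}, whose hypothesis is $\ela = \elb$ for some $b$ meeting $a$ once. There is no way to derive this equality from the braid relation: the relation only gives $\elb = \phi(t_at_b)\,\ela$, i.e.\ the two eigenspaces are carried to each other by an invertible matrix, which says nothing about coincidence. Indeed, for the representations that actually occur in this dimension (the symplectic block plus a trivial line, or the extensions $\phi_c$), the eigenspaces $E^a_1$ and $E^b_1$ for intersecting curves are genuinely different, so any strategy whose only exit is ``prove $\ela=\elb$'' cannot work; the case $\ela \neq \elb$ must be confronted by a separate mechanism, and your sketch offers none. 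The paper's proof supplies that mechanism: it passes to the subsurface $R$ obtained as the closure of the complement of a regular neighborhood of $a\cup b$ (genus $g-1\geq 4$), notes that since every element of $\M{R}$ commutes with $t_a$ and $t_b$, the subspaces $\ela\cap\elb$, and more generally the kernels $\ker(L_a-\lambda I)^k$, are $\M{R}$-invariant, and then applies the triviality criterion (Theorem \ref{triviality}) to a two-step flag whose successive quotients have dimension at most $2(g-1)-1$; this forces $\phi(\M{R})=\{I\}$, hence $L_a=I$ because $t_a$ is conjugate to a twist supported in $R$, contradicting the assumed bound on $\Dm{\ela}$. Your proposal never uses subsurface subgroups at all, and without some substitute for this step neither the $\lambda_\#=2g-2$ case nor the case $\Dm{\ela}\leq 2g-2$ with $\lambda_\#\geq 2g-1$ (where $\lambda$ may even be the unique eigenvalue, so your eigenvalue bookkeeping gives nothing) can be closed.

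Two smaller points. Your first reduction is fine and is a slightly different route from the paper's: combining Theorem \ref{sru-cor-4.6} with Theorem \ref{sru-lemma-4.5} does correctly exclude $4\leq\lambda_\#\leq 2g-3$; note, however, that this only bounds the algebraic multiplicity, whereas the paper's construction of the $\M{R}$-invariant subspace $W$ from the kernels of powers of $L_a-\lambda I$ yields the stronger conclusion $\Dm{\ela}\geq 2g-2$ directly, which is why the paper needs no separate ``upgrade'' stage of the kind you propose. Finally, the contradiction in the last stage cannot be ``with $\phi(t_a)$ being the image of a nontrivial generator'': $\phi$ is an arbitrary homomorphism and may be trivial; the contradiction must be with the assumed inequality on $\Dm{\ela}$, since triviality of $\phi$ (or of $\phi|_{\M{R}}$) forces $L_a=I$ and hence $\Dm{\ela}=2g+1$.
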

\par

\begin{proof}[Proof of Lemma \ref{LemA}]
 We choose a non-separating simple closed curve $b$ on $S$ which intersects with $a$ transversely 
 at a single point.
 We consider a regular neighbourhood of $a \cup b$ in the interior of $S$, 
 and denote the closure of its complement  in $S$ by $R$.
 Since $R$ and $S \smallsetminus R$ both have genus at least one, the inclusion map $R \hookrightarrow S$
 induces an injective homomorphism $\M{R} \to \M{S}$, and via this, we may consider 
$\M{R}$ as a subgroup of $\M{S}$ (see Paris--Rolfsen \cite{paris-rolfsen}).
\ppar

Now assume $\Dm{\ela} \leq 2g - 3$.  We set
$$
  W := \begin{cases}
	\ker{(L_a - \lambda I)^{\lambda_\#}} & \text{if $\lambda_\# \leq 2g-3$} \\
	\ker{(L_a - \lambda I)^{4}} & \text{if $\lambda_\# \geq 2g-2$ and $\Dm{\ela} =1$} \\
	\ker{(L_a - \lambda I)^{3}} & \text{if $\lambda_\# \geq 2g-2$ and $\Dm{\ela} =2$} \\
	\ker{(L_a - \lambda I)^{2}} & \text{if $\lambda_\# \geq 2g-2$ and $\Dm{\ela} =3$} \\
	\ela  & \text{if $\lambda_\# \geq 2g-2$ and $4 \leq \Dm{\ela} \leq 2g-3$} \\
       \end{cases}
$$
 Then, since any element of $\M{R}$ commutes with $t_a$, $W$ is $\M{R}$--invariant,  and since $g \geq 5$, its dimension satisfies
$$ 4 \leq \Dm{W} \leq 2(g-1)-1.$$
Since $g-1 \geq 3$, we may apply  Theorem \ref{triviality} for the $\M{R}$--invariant flag  
$0 \subset W \subset \C^{2g+1}$ to see $\phi(\M{R})$ is a trivial group.
Since $t_a$ is conjugate to a Dehn twist contained in $\M{R}$, we have $L_a = I$, which contradicts to 
the assumption $\Dm{\ela} \leq 2g-3$. We hence have $\Dm{\ela} \geq 2g-2$.
\ppar

Next, assume $\Dm{\ela} = 2g-2$. 
If $\ela \neq \elb$, we have $2g-5 \leq \Dm{\ela \cap \elb} \leq 2g-3$.
On the other hand, $\ela \cap \elb$ is clearly $\M{R}$-invariant. We then 
consider the  $\M{R}$-invariant flag 
$$ 0 \subset \ela \cap \elb \subset \C^{2g+1}.$$
Since $g \geq 5$, we have $\Dm{\ela \cap \elb} \leq 2g-3$ and
$\Dm{\C^{2g+1}/\ela \cap \elb} \leq 6 \leq 2g-3$. Therefore,  Theorem \ref{triviality} implies 
$\phi(\M{R})$ is trivial, which contradicts to $\Dm{\ela} = 2g-2$. 
\par

If $\ela = \elb$, then $\ela$ is $\M{S}$-invariant by Theorem \ref{sru-lemma-4.3}. We can therefore 
 apply Theorem \ref{triviality} to the flag $0 \subset \ela \subset \C^{2g+1}$ to see 
 $\phi$ is trivial. This contradicts the assumption $\Dm{\ela} = 2g-2$.

\ppar
This proves $\Dm{\ela} \geq 2g-1$.
\end{proof}
\ppar

The following is a slight generalization of Korkmaz \cite[Lemma 5.2]{sru}, and tells
that an eigenvalue of $L_a$ must be $1$ if its eigenspace has a small codimension:

\begin{lemma} \label{LemB}
 Let $\phi: \M{S} \to \GL{m,\C}$ be an arbitrary homomorphism.
 Let  $a$ be a nonseparating simple closed curve on 
 $S$, and $\lambda$ be an eigenvalue of $L_a = \phi(t_a)$. 
 Let $s = m - \Dm{\ela}$ where $\ela$ denotes the corresponding eigenspace. 
 \par

 If $g \geq 3$ and  $m > 7s$, then $\lambda = 1$.
\end{lemma}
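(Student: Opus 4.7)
The plan is to exploit the lantern relation from Theorem \ref{dtwist-rel}(4) to force a polynomial constraint on $\lambda$ that is already enough to pin it down to a root of unity with a very short list of possibilities -- in fact, just $\lambda = 1$.

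First I would observe that for any nonseparating simple closed curve $c$ on $S$, the Dehn twist $t_c$ is conjugate to $t_a$ in $\M{S}$ by the change of coordinates principle (equivalently, iterate Theorem \ref{dtwist-rel}(1) along a chain provided by Theorem \ref{sru-theorem-3.1}). Hence $L_c = \phi(t_c)$ is conjugate to $L_a$ and has the same Jordan form; in particular $\lambda$ is an eigenvalue of $L_c$ with $\dim E_\lambda^c = m - s$, so $E_\lambda^c$ has codimension exactly $s$ in $\C^m$.

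Next I would construct a lantern configuration inside $S$: a subsurface $P \cong S_{0,0}^4$ with boundary curves $a', b', c', d'$ and interior curves $x, y, z$ as in Figure \ref{lantern-fig}, chosen so that all seven curves are nonseparating in $S$. Since $g \geq 3$, one may embed $P$ in $S$ so that $\overline{S \smallsetminus P}$ is connected of genus $g - 2 \geq 1$, and then verify that the interior curves $x$, $y$, $z$ may be arranged to remain nonseparating in $S$. With this in hand, each of the seven eigenspaces $E_\lambda^{a'}, E_\lambda^{b'}, E_\lambda^{c'}, E_\lambda^{d'}, E_\lambda^x, E_\lambda^y, E_\lambda^z$ has codimension $s$ in $\C^m$ by the first step, so their common intersection $W$ satisfies $\dim W \geq m - 7s > 0$ by hypothesis. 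Choose any nonzero $v \in W$. Since $a', b', c', d'$ are mutually disjoint, $L_{a'}, L_{b'}, L_{c'}, L_{d'}$ pairwise commute and $v$ is simultaneously a $\lambda$-eigenvector for each, so $L_{a'} L_{b'} L_{c'} L_{d'} v = \lambda^4 v$. Applying $L_z$, then $L_y$, then $L_x$ in turn to $v$ and using $v \in E_\lambda^x \cap E_\lambda^y \cap E_\lambda^z$ at each stage, we similarly obtain $L_x L_y L_z v = \lambda^3 v$. The lantern relation $L_{a'} L_{b'} L_{c'} L_{d'} = L_x L_y L_z$ then forces $\lambda^4 v = \lambda^3 v$, and since $\lambda \neq 0$ (the image of a Dehn twist is invertible), we conclude $\lambda = 1$.

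The main obstacle is the topological step of placing a lantern in $S = S_{g,r}^p$ with all seven curves nonseparating, uniformly in $(p,r)$ and for all $g \geq 3$. The case $(p,r) = (0,0)$ is the standard construction; the general case is reduced to it by ignoring punctures and capping off boundary components, provided one verifies that the ambient surface still has enough room -- which is exactly the role played by the hypothesis $g \geq 3$.
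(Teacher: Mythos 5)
Your proposal is correct and takes essentially the same route as the paper: it embeds a lantern with all seven curves nonseparating (possible since $g \geq 3$), uses the codimension count to get $\dim\bigl(\bigcap_i E_\lambda^{c_i}\bigr) \geq m - 7s > 0$, and applies the lantern relation to a common eigenvector to force $\lambda^4 v = \lambda^3 v$ and hence $\lambda = 1$. (One cosmetic slip: a connected complement of the four-holed sphere has genus $g-3$, not $g-2$, which is exactly what $g \geq 3$ provides and does not affect the argument.)
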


\begin{proof}
 Since $g \geq 3$, we can apply Theorem \ref{dtwist-rel} (4) to 
 choose nonseparating simple closed curves
 $c_1 = a$, $c_2$, \ldots, $c_7$ on $S$ so that 
 they satisfy the lantern relation 
 \begin{equation} \label{LemB_lantern}
  t_{c_1} t_{c_2} t_{c_3} t_{c_4} = t_{c_5} t_{c_6} t_{c_7}. 
 \end{equation}
 For each $i = 1$, $2$, \ldots, $7$, we can choose $\psi_i : \C^m \to \C^s$ so that
 $\ker{\psi_i} = \eigensp{\lambda}{c_i}$ with $\rank{\psi_i} = s$.
 Then we see $\bigcap_{i=1}^{7}{\eigensp{\lambda}{c_i}} = 
          \ker{\left(\Psi := \oplus_{i=1}^{7}{\psi_i}: \C^m \to \oplus_{i=1}^{7}{\C^s}\right)}$.
 Therefore, we have 
  $$\Dm{\bigcap_{i=1}^{7}{\eigensp{\lambda}{c_i}} } = m - \rank{\Psi} \geq m - 7s > 0.$$ 
 We may thus choose $v_0 \in \bigcap_{i=1}^7{\eigensp{\lambda}{c_i}}$ with $v_0 \neq 0$.
 We multiply the images under $\phi$ of the left- and right-hand sides of \eqref{LemB_lantern} with 
 $v_0$ to obtain
 $$\lambda^4 v_0 = \lambda^3 v_0.$$
 Since $L_a$ is nonsingular, we have $\lambda = 1$.
\end{proof}
\ppar

Now we are ready to prove the following:
\begin{theorem} \label{eigen_2g+1}
 Suppose $g \geq 7$. Let $a$ be a non-separating simple closed curve on $S$.
 For an arbitrary homomorphism 
 $\phi: \M{S} \to \GL{2g+1,\C}$, the image $L_a = \phi(t_a)$ of the Dehn twist along $a$
 has exactly one eigenvalue $1$. Furthermore, the eigenspace $\eigensp{1}{a}$ has dimension
 at least $2g-1$.
\end{theorem}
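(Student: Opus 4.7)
The plan is to combine the three earlier results on eigenvalues and eigenspaces (Lemma \ref{LemA}, Lemma \ref{LemB}, Theorems \ref{sru-lemma-4.5} and \ref{sru-cor-4.6}) via a case analysis on the number of distinct eigenvalues of $L_a$. Since $g \geq 7$ implies $2g+1 \leq 4g-4$, Theorem \ref{sru-cor-4.6} immediately tells us that $L_a$ has at most two distinct eigenvalues, so I will rule out the two-eigenvalue case and then pin down the unique eigenvalue.

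First I would treat the hypothetical two-eigenvalue case. Suppose $L_a$ has two distinct eigenvalues $\lambda_1$ and $\lambda_2$, with multiplicities summing to $2g+1$. Since $2g+1 \geq 15$, at least one of the multiplicities, say $\lambda_{1\#}$, satisfies $\lambda_{1\#} \geq \lceil(2g+1)/2\rceil \geq 8 \geq 4$. Lemma \ref{LemA} then gives $\Dm{\eigensp{\lambda_1}{a}} \geq 2g-1$, so $s_1 := (2g+1) - \Dm{\eigensp{\lambda_1}{a}} \leq 2$ and in particular $\lambda_{2\#} \leq s_1 \leq 2 \leq 2g-3$. Applying Theorem \ref{sru-lemma-4.5} to $\lambda_2$ forces $\lambda_2 = 1$. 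On the other hand, since $g \geq 7$ we have $2g+1 > 14 \geq 7 s_1$, so Lemma \ref{LemB} applied to $\lambda_1$ yields $\lambda_1 = 1$ as well, contradicting $\lambda_1 \neq \lambda_2$. Hence $L_a$ has exactly one eigenvalue.

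Denote this unique eigenvalue by $\lambda$, so $\lambda_\# = 2g+1 \geq 4$. Lemma \ref{LemA} gives $\Dm{\ela} \geq 2g-1$, which is already the dimension lower bound the theorem claims. Setting $s = (2g+1) - \Dm{\ela} \leq 2$, we again have $2g+1 > 14 \geq 7s$ for $g \geq 7$, so Lemma \ref{LemB} identifies $\lambda = 1$. Both parts of the theorem follow.

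I do not anticipate a genuine obstacle here: every input is already in place, and the only balancing act is to keep the inequalities on $g$ consistent. The mildly delicate point is verifying that the multiplicity of the ``small'' eigenvalue in the two-eigenvalue case is small enough to trigger Theorem \ref{sru-lemma-4.5} (which needs multiplicity $\leq 2g-3$); this follows automatically because Lemma \ref{LemA} forces the eigenspace of the ``large'' eigenvalue to have codimension at most $2$, a much stronger bound than needed.
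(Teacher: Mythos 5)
Your proposal is correct and follows essentially the same route as the paper: Theorem \ref{sru-cor-4.6} limits $L_a$ to at most two eigenvalues, Theorem \ref{sru-lemma-4.5} pins the small one to $1$, and Lemma \ref{LemA} plus Lemma \ref{LemB} pin the large (or unique) one to $1$, yielding the contradiction and the dimension bound. The only cosmetic difference is that you bound the small eigenvalue's multiplicity by the codimension $s_1\leq 2$ coming from Lemma \ref{LemA}, whereas the paper simply notes the smaller multiplicity is at most $g\leq 2g-3$; both verifications are valid.
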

\par

\begin{proof}
 We first observe the eigenvalue of $L_a$ is unique. Since $g \geq 3$ and  $2g+1 \leq 4g-4$,
 The application of  Theorem \ref{sru-cor-4.6} implies the number of eigenvalues of $L_a$ is
 at most two.  Assume that $L_a$ has distinct eigenvalues $\mu$ and  $\lambda$ with
 $\mu_{\#} \leq \lambda_{\#}$. Here, $\mu_{\#}$ and $\lambda_{\#}$ denote the multiplicities
 of $\mu$ and $\lambda$, respectively, in the characteristic polynomial of $L_a$.
 Since $\mu_{\#} + \lambda_{\#} = 2g+1$, we see 
 $\mu_{\#} \leq g$ and $\lambda_{\#} \geq g+1$. In particular, since $\mu_{\#} \leq 2g-3$,
 by Theorem \ref{sru-lemma-4.5}, we have
 $\mu = 1$ and $\Dm{\ea{\mu}} = \mu_{\#}$.
 \par
 On the other hand, we see $\lambda_{\#} \geq 4$ since $\lambda_{\#} \geq g+1$, and also $g \geq 5$ by
 assumption. Therefore, Lemma \ref{LemA} implies
 $$\Dm{\ela} \geq 2g-1.$$
 This shows that the codimension $s = 2g+1 - \Dm{\ela}$ of $\ela$ satisfies 
 $s \leq 2$. Together with the assumption $g \geq 7$, we have
 $$ 2g+1 \geq 15 > 14 \geq 7s.$$
 Hence, we have $2g+1 > 7s$, and therefore Lemma \ref{LemB} implies $\lambda = 1$, which contradicts to
 $\lambda \neq \mu$. This proves the uniqueness of the eigenvalue of $L_a$.
\par

 Now, let $\lambda$ be the unique eigenvalue of $L_a$. Since $\lambda_{\#} = 2g+1 \geq 4$, 
 Lemma \ref{LemA} implies $\Dm{\ela} \geq 2g-1$ again, we hence have $\lambda = 1$ by
 Lemma \ref{LemB}. This completes the proof.
\end{proof}
\ppar

 \section{Further restriction of eigenspace} \label{further_restriction} \par

In this section, we show that the twisted cohomology group $H^1(\M{S_{g,0}^1}; H_{\C})$ 
gives a further restriction of $\Dm{\ea{1}}$ in Theorem \ref{eigen_2g+1}. We first review
necessary computation results. For generalities on cohomology of groups, we refer to 
\cite{brown}.

\par

\subsection{Twisted $1$-cohomology} \par

Recall that $S = S_{g,r}^p$ denotes the oriented surface of genus $g$ with $p$ boundary components
and $r$ punctures. We denote $H = H_1(\Sb;\Z)$ and $H_{\C} = H_1(\Sb; \C)$ where $\Sb$ denotes the closed 
surface of genus $g$ obtained from $S$  by by gluing a $2$-disk to each boundary component and 
forgetting the punctures. The homology group $H_1(\Sb)$ with coefficients in any abelian group is 
naturally a left $\M{S}$-module via the natural surjection $\M{S} \to \M{\Sb}$. In this setting, 
we first describe $H^1(\M{S};H_\C)$ for general $p$, $r$.

\begin{theorem} \label{coeff_H_C}
 Let $g \geq 2$. For $p$, $r \geq 0$, 
 \begin{equation*}
  H^1(\M{S}; H_\C) \cong \C^{p+r}
 \end{equation*}
 where $\C^{0}$ denotes $0$.
\end{theorem}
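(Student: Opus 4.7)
The plan is to prove this by induction on $n = p + r$, using the two base cases computed by Morita \cite{morita_jacobian_I}: $H^1(\M{\Sb}; H) = 0$ for the closed surface and $H^1(\M{S_{g,0}^1}; H) \cong \Z$, which upon tensoring with $\C$ (the universal coefficient theorem kills the Tor terms in characteristic $0$) give $H^1(\M{\Sb}; H_\C) = 0$ and $H^1(\M{S_{g,0}^1}; H_\C) \cong \C$, matching $\C^0$ and $\C^1$ respectively. The inductive step applies the five-term exact sequence of the Lyndon--Hochschild--Serre spectral sequence to two standard short exact sequences of mapping class groups.

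First, for $p \geq 1$, the capping exact sequence
\[
1 \to \Z \to \M{S_{g,r}^p} \to \M{S_{g,r+1}^{p-1}} \to 1
\]
has kernel generated by the Dehn twist along the capped boundary, which acts trivially on $H_\C$ because the curve is null-homotopic in $\Sb$. The five-term exact sequence becomes
\[
0 \to H^1(\M{S_{g,r+1}^{p-1}}; H_\C) \to H^1(\M{S_{g,r}^p}; H_\C) \to (H_\C)^{\M{S_{g,r+1}^{p-1}}} \to \cdots,
\]
and the irreducibility of $\symp$ for $g \geq 2$ (Remark \ref{symp-trivial}) forces $(H_\C)^{\M{S_{g,r+1}^{p-1}}} = 0$. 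Hence $H^1(\M{S_{g,r}^p}; H_\C) \cong H^1(\M{S_{g,r+1}^{p-1}}; H_\C)$, which preserves $p+r$; after finitely many reductions I may assume $p = 0$.

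Second, for $r \geq 1$, the Birman exact sequence
\[
1 \to \pi_1(S_{g,r-1}^p, \ast) \to \M{S_{g,r}^p} \to \M{S_{g,r-1}^p} \to 1
\]
has the point-pushing subgroup acting trivially on $H_\C$, since the push of a simple loop $\gamma$ equals $t_a t_b^{-1}$ for curves $a,b$ homologous in $\Sb$ and the two symplectic transvections cancel. Thus $H^1(\pi_1; H_\C) \cong \Hom(H_1(S_{g,r-1}^p; \C), H_\C)$. As the pure mapping class group $\M{S_{g,r-1}^p}$ fixes each peripheral loop setwise, the module $H_1(S_{g,r-1}^p; \C)$ decomposes $\M{S_{g,r-1}^p}$-equivariantly into the image $H_\C$ of the map induced by $S_{g,r-1}^p \hookrightarrow \Sb$ plus a trivial summand of dimension $\max(p+r-2, 0)$ spanned by peripheral loops. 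Schur's lemma together with the irreducibility of $\symp$ then yields
\[
H^1(\pi_1; H_\C)^{\M{S_{g,r-1}^p}} \cong \C,
\]
and the five-term exact sequence becomes
\[
0 \to H^1(\M{S_{g,r-1}^p}; H_\C) \to H^1(\M{S_{g,r}^p}; H_\C) \to \C \xrightarrow{d_2} H^2(\M{S_{g,r-1}^p}; H_\C).
\]

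The main obstacle is to show that the transgression $d_2$ vanishes, for then each Birman step contributes exactly one additional copy of $\C$, closing the induction. I would establish this by explicitly lifting a generator of the invariant line $H^1(\pi_1; H_\C)^{\M{S_{g,r-1}^p}}$ to a crossed homomorphism of $\M{S_{g,r}^p}$, using a Morita--Johnson type construction: choose a lift of $\M{S_{g,r}^p}$ to the automorphism group of $\pi_1$ together with a distinguished peripheral element, and take the induced homology class of the discrepancy. A direct check on the point-pushing subgroup shows the restriction realizes the identity endomorphism of $H_\C$, forcing $d_2 = 0$. Combining this vanishing with the two reduction steps and the base cases yields $H^1(\M{S}; H_\C) \cong \C^{p+r}$ in general.
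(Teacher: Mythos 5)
Your route is genuinely different from the paper's. The paper does not reprove this computation: it assembles it from the literature (Morita for $p+r\leq 1$, Hain and Putman for $g\geq 3$, and an unpublished communication of Sato for $g=2$, $p+r>1$) and only supplies the change of coefficients from $H$ and $H_1(\Sb;\Q)$ to $H_\C$, using $H_0(\M{S};H)=0$, the universal coefficient theorem and finite generation of $H_1(\M{S};H)$. Your induction on $p+r$ via the capping sequence and the Birman sequence would, if completed, give a self-contained derivation from Morita's $p+r\leq 1$ cases alone, and in particular would cover the case $g=2$, $p+r>1$ that the paper attributes to \cite{sato}. The capping step is correct as stated: the boundary twist is central and acts trivially on $H_\C$, and $(H_\C)^{\M{S}}=0$, so the five-term sequence gives an isomorphism. (Your parenthetical ``UCT kills the Tor terms'' is the wrong statement for twisted group cohomology; what is actually needed is the paper's argument via finite generation/presentation, but over $\C$ this is routine.)

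Two points need repair, one minor and one essential. First, your claimed $\M{S_{g,r-1}^p}$-equivariant decomposition of $H_1$ of the punctured surface into $H_\C$ plus a trivial peripheral summand is false as soon as the peripheral summand is nonzero: pushing another puncture $x_1$ along a loop $\gamma$ sends a class $c$ to $c+\langle \gamma,\bar{c}\rangle\,\delta$, where $\delta$ is the class of a loop around $x_1$ and $\bar{c}$ the image of $c$ in $H_\C$, so no invariant complement to the peripheral subspace exists. Fortunately the conclusion you need survives without splitting: applying $\Hom(-,H_\C)$ to $0\to K\to H_1\to H_\C\to 0$ and taking invariants, one has $\Hom(K,H_\C)^{Q}\cong\left((H_\C)^{Q}\right)^{\dim K}=0$ and $\Hom(H_\C,H_\C)^{Q}\cong\C$ by Schur's lemma and the irreducibility of $\symp$ (Remark \ref{symp-trivial}), so $H^1(N;H_\C)^{Q}\cong\C$. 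Second, and this is the crux, the vanishing of the transgression $d_2$ is only asserted through a vaguely described ``discrepancy'' construction; as it stands the induction only gives an inequality at each Birman step. The gap can be closed, for instance, as follows: let $\phi:\M{S_{g,r}^p}\to\M{S_{g,1}^0}$ be the map which caps all boundary components and forgets all punctures except the one being forgotten. The five-term sequence for $1\to\pi_1(\Sb)\to\M{S_{g,1}^0}\to\M{\Sb}\to 1$, combined with Morita's computations $H^1(\M{\Sb};H_\C)=0$ and $H^1(\M{S_{g,1}^0};H_\C)\cong\C$, shows that a generator of the latter group restricts on the point-pushing subgroup to a nonzero multiple of the natural map $\pi_1(\Sb)\to H_\C$; its pullback under $\phi$ is a crossed homomorphism on $\M{S_{g,r}^p}$ whose restriction to the point-pushing kernel is a nonzero multiple of the projection $H_1\to H_\C$, i.e.\ of the generator of the invariant line. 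Hence that line lies in the image of the restriction map and $d_2=0$. With these two repairs your induction does prove the theorem, and more self-containedly than the paper does.
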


This theorem is due to Morita \cite{morita_jacobian_I} for the case $p+r \leq 1$, 
and is due to Hain \cite[Proposition 5.2]{hain} for the case $g \geq 3$ and general $p$, $r \geq 0$.
See also Putman \cite[Theorem 3.2]{putman} for the case $g \geq 3$. 
The computation with the full generality on $(g,p,r)$ was communicated to the author
by Sato \cite{sato}. 
\par

We remark that the twisted coefficients used by Morita and Hain in their computations are 
slightly different from $H_\C$: Morita computed $H^1(\M{S};H) \cong \Z^{p+r}$ and Hain computed
$H^1(\M{S};H_1(\Sb;\Q)) \cong \Q^{p+r}$ in the indicated range of $(g,p,r)$. The cohomology with 
coefficients in $H_\C$ can be obtained from these computations as follows.
\par

For any $g \geq 2$ and $p$, $r \geq 0$, the homology group $H_0(\M{S};H)$ is by definition 
the coinvariant of the $\M{S}$-module $H$, and can be easily seen to be zero. Therefore, a 
standard argument of the universal coefficient theorem implies, for $k=\Z$, $\Q$, $\C$,
\begin{equation*}
 H^1(\M{S}; H_1(\Sb;k)) \cong \Hom_\Z(H_1(\M{S};H),k).
\end{equation*}
Since $\M{S}$ is finitely generated, so is $H_1(\M{S};H)$. We hence have
\begin{align*}
 H^1(\M{S};H_\C) & \cong H^1(\M{S};H) \otimes_\Z \C \\
                 &  \cong H^1(\M{S}; H_1(\Sb;\Q)) \otimes_\Q \C,
\end{align*}
which implies Theorem \ref{coeff_H_C} except for the case $g=2$ and $p+r > 1$.
\ppar

In addition to the computation mentioned above, Morita combinatorially defined a crossed homomorphism 
$$k_0: \M{S_{g,0}^1} \to H$$
which represents a generator of $H^1(\M{S_{g,0}^1};H)$. This crossed homomorphism has the property 
$k_0(t_l) = 0$ for a certain nonseparating simple closed curve $l$ on $S_{g,0}^1$ 
(see \cite{morita}, especially the proof of Proposition 6.15 therein). This particular property 
implies the following.

\begin{theorem} \label{crossed_hom}
 Let $R$ be a compact connected oriented surface of genus at least $2$ with nonempty connected boundary
 and no punctures.
 Also let $\Rb$ denote the closed surface obtained from $R$ by gluing a $2$-disk along the boundary.
 Suppose 
  $$c: \M{R} \to H_1(\Rb;\C)$$
 is an arbitrary crossed homomorphism. Let $d$ be a nonseparating simple closed curve on $R$.
 Let $\tilde{d}$ denote the oriented curve obtained from d by choosing an arbitrary
 orientation, and $[\tilde{d}]$ denote its representing class in $H_1(\Rb;\C)$.
 Then
 $$c(t_d) = z [\tilde{d}]$$
 for some $z \in \C$.
\end{theorem}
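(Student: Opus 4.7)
The plan is to exploit the fact that $H^1(\M{R}; H_1(\Rb;\C))$ is one-dimensional. Since $R = S_{g,0}^1$ with $g \geq 2$ satisfies $p+r = 1$, Theorem \ref{coeff_H_C} gives $H^1(\M{R}; H_1(\Rb;\C)) \cong \C$. As recalled just before the statement, Morita's combinatorially defined crossed homomorphism $k_0 : \M{R} \to H_1(\Rb;\Z) \hookrightarrow H_1(\Rb;\C)$ represents a generator of this cohomology group and satisfies the special identity $k_0(t_l) = 0$ for a certain nonseparating simple closed curve $l$ on $R$. Consequently $c$ admits a decomposition
\[
c(f) = z'\, k_0(f) + (f \cdot v - v) \qquad (f \in \M{R})
\]
for some $z' \in \C$ and some $v \in H_1(\Rb;\C)$. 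It then suffices to show that both summands, evaluated at $t_d$, lie in the line $\C [\tilde{d}] \subset H_1(\Rb;\C)$.

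For the principal summand, the Picard--Lefschetz transvection formula describing the induced action of $t_d$ on $H_1(\Rb;\C)$ reads $t_d \cdot x = x + \langle x, [\tilde{d}]\rangle [\tilde{d}]$, where $\langle\cdot,\cdot\rangle$ denotes the algebraic intersection pairing on $\Rb$. Therefore $t_d \cdot v - v = \langle v, [\tilde{d}]\rangle [\tilde{d}] \in \C [\tilde{d}]$. For the $k_0$-summand, the change-of-coordinates principle supplies some $f \in \M{R}$ with $f(l) = d$, since $\M{R}$ acts transitively on the set of isotopy classes of nonseparating simple closed curves on $R$. Then $t_d = f t_l f^{-1}$ by Theorem \ref{dtwist-rel}(1), and expanding $k_0(t_d) = k_0(f t_l f^{-1})$ via the crossed homomorphism identity \eqref{crossed-hom}, together with $k_0(t_l) = 0$ and the consequence $k_0(f^{-1}) = -f^{-1} \cdot k_0(f)$ of $0 = k_0(1) = k_0(f) + f \cdot k_0(f^{-1})$, produces
\[
k_0(t_d) = k_0(f) - t_d \cdot k_0(f) = (I - t_d)\, k_0(f).
\]
The transvection formula again places this vector in $\C [\tilde{d}]$, and summing the two contributions yields $c(t_d) = z [\tilde{d}]$ for some $z \in \C$.

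The main obstacle is one of bookkeeping rather than conceptual depth: one must expand $k_0(f t_l f^{-1})$ carefully using the cocycle identity and simplify down to $(I - t_d)\, k_0(f)$. The conceptual ingredients -- the one-dimensionality of $H^1(\M{R}; H_1(\Rb;\C))$, the specific vanishing $k_0(t_l) = 0$ of Morita's generator, and the fact that the image of $I - t_d$ on $H_1(\Rb;\C)$ is contained in $\C [\tilde{d}]$ by Picard--Lefschetz -- combine cleanly, and no deep new technique is required beyond the results already assembled.
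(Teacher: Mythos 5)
Your proposal is correct and follows essentially the same route as the paper: decompose $c$ as a scalar multiple of Morita's crossed homomorphism $k_0$ plus a principal part, use the change-of-coordinates principle and the cocycle identity together with $k_0(t_l)=0$ to get $k_0(t_d)=k_0(f)-(t_d)_*k_0(f)$, and finish with the transvection (Picard--Lefschetz) formula placing everything in $\C[\tilde{d}]$. The only differences are cosmetic (the paper combines the two contributions into a single principal-type expression, and your intersection-pairing convention differs by a sign, which does not affect the conclusion).
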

\par

\begin{proof}
 Let
 $k_0:\M{R} \to H_1(\Rb;\Z)$ be Morita's crossed homomorphism above for $R$.
 We may consider $k_0$ represents a generator of $H^1(\M{R}; H_1(\Rb;\C)) \cong \C$ 
 via the inclusion  $H_1(\Rb;\Z) \hookrightarrow H_1(\Rb;\C) \cong H_1(\Rb;\Z) \otimes_{\Z} \C$.
 Hence, there exist $A \in \C$ and $x \in H_1(\Rb;\C)$ such that 
 \begin{equation}\label{c(f)}
  c(f) = A k_0(f) + f_*x -x
 \end{equation}
 for each $f \in \M{R}$. As mentioned above, there exists a nonseparating simple
 closed curve $l$ on $R$ such that $k_0(t_l) = 0$. By the classification theorem for surfaces, there exists
 $\varphi \in \M{R}$ such that $\varphi(l) = d$. Then we have $t_d = \varphi t_l \varphi^{-1}$ by Theorem
 \ref{dtwist-rel}.
 On the other hand, the property of crossed homomorphism \eqref{crossed-hom} implies
 $k_0(f^{-1}) = -f_*^{-1} k_0(f)$ for any $f \in \M{R}$, and hence
 \begin{align*}
    k_0(t_d)  & =  k_0(\varphi t_l \varphi^{-1}) = k_0(\varphi) + \varphi_* k_0(t_l \varphi^{-1}) \\
     & = k_0(\varphi) + \varphi_*(k_0(t_l) + (t_l)_* k_0(\varphi^{-1})) \\
     & = k_0(\varphi) + (\varphi t_l)_*k_0(\varphi^{-1})
    = k_0(\varphi)   - (\varphi t_l \varphi^{-1})_*k_0(\varphi) \\
    	         & =  k_0(\varphi) - (t_d)_*k_0(\varphi). 
 \end{align*}
In view of \eqref{c(f)}, we then have
 $$c(t_d) = (t_d)_* ( x - A k_0(\varphi) ) - (x - A k_0(\varphi) ). $$
As is well-known, the action of $t_d$ on $H_1(\Rb;\C)$ is given by 
 \begin{align} \label{P-L}
  (t_d)_*x = x + \langle [\tilde{d}],x\rangle [\tilde{d}] \quad\text{for $x \in H_1(\Rb;\C)$}
 \end{align}

where $\langle \cdot, \cdot\rangle$ denotes the algebraic intersection form on $H_1(\Rb;\C)$.
We then conclude
 $$c(t_d) = \langle [\tilde{d}], x - A k_0(\varphi)\rangle [\tilde{d}]
      = z [\tilde{d}]$$
with $z = \langle [\tilde{d}], x - A k_0(\varphi)\rangle \in \C$.
This completes the proof of Theorem \ref{crossed_hom}.
\end{proof}

\subsection{The restriction of $\Dm{\ea{1}}$}  \label{restriction} \par

We can now prove the following.
\begin{theorem} \label{eigen_dim}
 Suppose $g \geq 7$. Let $a$ be a nonseparating simple closed curve on $S$.
 For any non-trivial homomorphism $\phi: \M{S} \to \GL{2g+1, \C}$, the image 
 $L_a = \phi(t_a)$ has a unique eigenvalue $1$, and $\Dm{\ea{1}} = 2g$.
\end{theorem}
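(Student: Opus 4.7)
The plan is to sharpen Theorem \ref{eigen_2g+1}'s lower bound $\Dm{\ea{1}} \geq 2g-1$ to the equality $\Dm{\ea{1}} = 2g$. Since $\phi$ is nontrivial and $\M{S}$ is generated by Dehn twists along nonseparating simple closed curves (Theorem \ref{generators}), $L_a \neq I$ for every such curve $a$, hence $\Dm{\ea{1}} \leq 2g$; combined with Theorem \ref{eigen_2g+1} this leaves only $\Dm{\ea{1}} \in \{2g-1, 2g\}$. Moreover, by Theorem \ref{sru-theorem-3.1} and Theorem \ref{dtwist-rel}(1), $\Dm{\eigensp{1}{c}}$ is independent of the nonseparating curve $c$, so it suffices to derive a contradiction from $\Dm{\ea{1}} = 2g-1$.

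The main tool will be Theorem \ref{crossed_hom}, and the strategy is to produce from $\phi$ a natural crossed homomorphism into $H_\C$ to which this theorem applies. By Theorems \ref{fh-k} and \ref{2g-rep}, the only nontrivial irreducible complex representations of $\M{S}$ of dimension at most $2g$ are the trivial representation and the symplectic representation $\symp$. Consequently the irreducible composition factors of $\phi$ sum to $2g+1$ in one of only three patterns: (a) entirely trivial, (b) one $\symp$ factor together with one trivial factor, or (c) $\phi$ itself irreducible. Pattern (a) makes $\phi$ trivial by Theorem \ref{triviality}, contradicting the hypothesis, and the argument now splits according to (b) and (c).

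For case (b), passing to the dual $\phi^\ast$ if necessary (an operation preserving $\Dm{\ea{1}}$ since $\phi(t_a)$ and $\phi^\ast(t_a)$ share a Jordan form) I may assume $\phi$ is conjugate to $\left( \begin{array}{c|c} \symp & c \\ \hline 0 & 1 \end{array} \right)$ with $c \colon \M{S} \to H_\C$ a crossed homomorphism. I then choose a subsurface $R_0 \subset S$ with $R_0 \cong S_{g,0}^1$ containing $a$ in its interior, which exists as a proper subsurface whenever $S$ has at least one puncture or boundary component; when $S$ is closed, I use instead the vanishing $H^1(\M{S}; H_\C) = 0$ from Theorem \ref{coeff_H_C} to conclude directly that $c$ is principal. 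Restricting $c$ to $\M{R_0}$ and invoking Theorem \ref{crossed_hom} yields $c(t_a) = z_a [a]$ for some $z_a \in \C$, and then a direct rank computation shows $\rank(L_a - I) = 1$ since both $\Im(\symp(t_a) - I)$ and $z_a[a]$ lie in the line $\C[a]$, so $\Dm{\ea{1}} = 2g$, contradicting the standing assumption.

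Case (c), with $\phi$ irreducible, is the main obstacle, since then no $\M{S}$-invariant proper subspace of $\C^{2g+1}$ is available from which to build a crossed homomorphism globally. Here I would fix a nonseparating $b$ intersecting $a$ transversely once and consider $V = \ea{1} \cap \eb{1}$. The case $\ea{1} = \eb{1}$ produces via Theorem \ref{sru-lemma-4.3} an $\M{S}$-invariant subspace of dimension $2g-1$, which contradicts irreducibility. Otherwise $\Dm{V} \in \{2g-3, 2g-2\}$ and $V$ is $\M{R}$-invariant for $R = S \setminus \mathrm{nbhd}(a \cup b)$ of genus $g-1 \geq 6$; when $\Dm{V} = 2g-3$, Theorem \ref{triviality} applied to $\phi|_{\M{R}}$ with the flag $0 \subset V \subset \C^{2g+1}$ (quotient dimensions $2g-3$ and $4$, both at most $2(g-1)-1$) forces $\phi|_{\M{R}}$ trivial, whence $L_a = I$ by conjugacy of $t_a$ to a Dehn twist in $\M{R}$, and a contradiction follows. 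The remaining subcase $\Dm{V} = 2g-2$ requires the most care: here the $\M{R}$-invariant hyperplane $W = \ea{1} + \eb{1}$ has a trivial induced $\M{R}$-action on the quotient $\C^{2g+1}/W$ by Theorem \ref{abelianization}, so $\phi|_{\M{R}}$ takes block upper-triangular form; further restricting to a subsurface $R_0$ of the type required by Theorem \ref{crossed_hom} and pinning down the $\M{R_0}$-representation on $W$ will again produce a crossed homomorphism amenable to that theorem, forcing the rank of $L_a - I$ to be $1$ and yielding the contradiction. Making this last identification rigorous---so as to extract a crossed homomorphism in the right cohomology group and rule out that the restricted representation stays irreducible on every such $R_0$---is the technically demanding part of the proof.
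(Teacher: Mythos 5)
Your reduction to ruling out $\Dm{\ea{1}} = 2g-1$ is fine, and your cases (a) and (b) are essentially correct (in (b), restricting the crossed homomorphism to a genus-$g$ subsurface with one boundary component, or using $H^1(\M{S};H_\C)=0$ in the closed case, does force $c(t_a) \in \C[a]$ and hence $\rank(L_a - I)=1$). But the proof has a genuine gap exactly where you acknowledge it: case (c), where $\phi$ is irreducible, in the subcase $\Dm{\ea{1}\cap\eb{1}} = 2g-2$. This case cannot be waved away --- at this stage nothing rules out an irreducible $\phi$ (that is only a corollary of the final classification) --- and your sketch does not close it. Moreover, the sketch as written points at the wrong subspace: you propose to pin down the $\M{R}$-action on the hyperplane $W=\ea{1}+\eb{1}$, which has dimension $2g$, whereas the only classification tool available, Theorem \ref{2g-rep} applied to $\M{R}$ with $R$ of genus $g-1$, governs representations of dimension $2(g-1)=2g-2$; it says nothing directly about a $2g$-dimensional $\M{R}$-module. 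There is also no need to hunt for a further subsurface $R_0$: the subsurface $R$ disjoint from $a\cup b$, of genus $g-1\geq 2$ with connected boundary and no punctures, already satisfies the hypotheses of Theorem \ref{crossed_hom}.

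For comparison, the paper needs no global trichotomy on $\phi$ at all; its argument is entirely local to $R$ and treats reducible and irreducible $\phi$ uniformly. Assuming $\Dm{\ea{1}}=2g-1$ and $\ea{1}\neq\eb{1}$ with $\Dm{\ea{1}\cap\eb{1}}=2g-2$, one takes the $\M{R}$-invariant flag $0\subset \ea{1}\cap\eb{1}\subset \C^{2g+1}$: the $3$-dimensional quotient carries a trivial $\M{R}$-action by Theorem \ref{fh-k}, while the action on $\ea{1}\cap\eb{1}$, being $2(g-1)$-dimensional, is by Theorem \ref{2g-rep} either trivial (then $\phi(\M{R})$ is abelian, hence trivial by Theorem \ref{abelianization}, forcing $L_a=I$, a contradiction) or conjugate to $\rho_0^R$. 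In the symplectic case the three columns of the off-diagonal block define crossed homomorphisms $c_i:\M{R}\to H_1(\Rb;\C)$, and Theorem \ref{crossed_hom} together with the transvection formula \eqref{P-L} gives $\rank\left(\phi(t_d)-I\right)=1$ for a nonseparating $d\subset R$; since $t_a$ is conjugate to $t_d$, this contradicts $\Dm{\ea{1}}=2g-1$. This is precisely the identification you left as ``the technically demanding part,'' so as it stands your proposal does not constitute a proof of the theorem.
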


\begin{proof}
 By Theorem \ref{eigen_2g+1}, the matrix $L_a$ has a unique eigenvalue $1$, and it holds
 $\Dm{\ea{1}} \geq 2g-1$. If $\Dm{\ea{1}} = 2g+1$, 
 then the homomorphism $\phi$ is trivial,  since $L_a = I$ and $\M{S}$ is generated by 
 conjugations of $t_a$. Therefore, we have only to prove $\Dm{\ea{1}} \neq 2g-1$.
 Suppose to the contrary that $\Dm{\ea{1}} = 2g-1$.
 Let $b$ be a non-separating simple closed curve which intersects $a$ transversely at a
 single point. We divide into two cases according to whether $\ea{1}$ coincides with $\eb{1}$ or not.
\ppar

{\bf (I) The case $\ea{1} = \eb{1}$.}
 By Theorem \ref{sru-lemma-4.3}, 
 $\ea{1}$ is a $\M{S}$-invariant $(2g-1)$-dimensional subspace of $\C^{2g+1}$.
 Therefore, we have a $\M{S}$-invariant flag $0 \subset \ela \subset \C^{2g+1}$, whose all
 successive quotients have dimensions less than or equal to $2g-1$. Therefore, since $g \geq 3$,
 Theorem \ref{triviality} implies $\phi$ is trivial, which contradicts to $\Dm{\ea{1}} = 2g-1$.
 \ppar

{\bf (II) The case $\ea{1} \neq \eb{1}$.}
 We choose a compact connected subsurface $R$ of $S$ so that $R$ is disjoint from 
 $a \cup b$ and has genus $g-1$,  nonempty connected boundary, and no punctures, as 
 schematically depicted in Figure \ref{fig_R}. We may consider $\M{R}$ as a subgroup of $\M{S}$
 ({\em c.f.}\/ \cite{paris-rolfsen}).
  \begin{figure}
   \centering
   \includegraphics[scale=1.0]{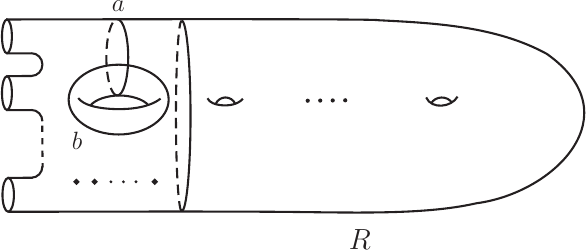}
   \caption{The subsurface $R$}\label{fig_R}
 \end{figure}
 Since the Dehn twists $t_a$ and $t_b$ commute with any element of $\M{R}$, both $\ea{1}$ and
 $\eb{1}$ are $\M{R}$-invariant, and so is $\ea{1} \cap \eb{1}$. Its dimension satisfies
 $$2g-3 \leq \Dm{\ea{1} \cap \eb{1}} \leq 2g-2.$$
 We divide into two subcases.
\ppar

{\bf (i) The case $\Dm{\ea{1} \cap \eb{1}} = 2g-3$.}
In the $\M{R}$-invariant flag  $0 \subset \ea{1} \cap \eb{1} \subset \C^{2g+1}$, we see
the dimensions of the successive quotients are less than or equal to $2(g-1)-1$, and 
$g-1 \geq 3$. Therefore, the restriction of $\phi$ to $\M{R}$ is trivial by Theorem \ref{triviality}.
Since $t_a$ is conjugate to an element of $\M{R}$, we have $L_a = \phi(t_a) = I$. This contradicts to
the assumption $\Dm{\ea{1}} = 2g-1$.
\ppar

{\bf (ii) The case $\Dm{\ea{1} \cap \eb{1}} = 2g-2$:}

In the $\M{R}$-invariant flag  $0 \subset \ea{1} \cap \eb{1} \subset \C^{2g+1}$, we see
$\Dm{\C^{2g+1}/\ea{1} \cap \eb{1}} = 3 \leq 2g-3$, and hence the induced action of $\M{R}$ on
$\C^{2g+1}/(\ea{1} \cap \eb{1})$ is trivial by Theorem \ref{fh-k}. On the other hand, since $g-1 \geq 3$, Theorem \ref{2g-rep}
implies that the action of $\M{R}$ on $\ea{1} \cap \eb{1}$ via $\phi$ is either A) trivial, or B)
conjugate to the symplectic representation of $\M{R}$, which we denote by
$\rho^R_0: \M{R} \to \GL{H_1(\bar{R};\C)}$. Here, we denote by $\bar{R}$ the closed surface
obtained from $R$ by gluing a $2$-disk along the boundary of $R$.

\par
In case A), we choose an arbitrary basis of $\ea{1} \cap \eb{1}$ and extend it arbitrarily
to a basis $\alpha$ of $\C^{2g+1}$. For each $f \in \M{R}$, we have
 $$\phi(f) = \left( \begin{array}{c|c}
		       I_{2g-2} &  * \\
		       \hline
			0 & I_{3}
		      \end{array}
		     \right)$$
according to $\alpha$. This implies that $\phi(\M{R})$ is an abelian group. Therefore, 
$\phi(\M{R})$ is trivial by Theorem \ref{abelianization}. As before, this implies
$L_a = \phi(t_a) = I$ and contradicts to $\Dm{\ea{1}} = 2g-1$.
\ppar

In case B), we may choose an isomorphism $u: \ea{1} \cap \eb{1} \to H_1(\Rb;\C)$
such that 
$$u(\phi(f) v) = f_* u(v) \quad \text{($f \in \M{R}$, $v \in \ela \cap \elb$)}.$$
Here, $f_*$ denotes the natural action of $f$ on $H_1(\Rb;\C)$.
\par

\ppar

We now fix a basis of $\C^{2g+1}$ extending an appropriate basis of $\ea{1} \cap \eb{1}$.
Then, under the identification of $\ea{1} \cap \eb{1}$ with $H_1(\Rb;\C)$ via $u$, the image of
$f \in \M{R}$ under $\phi$ has the form
$$\phi(f) = \left( \begin{array}{c|c}
		       \rho^R_0(f) &  \begin{matrix} w_1 & w_2 & w_3 \end{matrix} \\
		       \hline
			0 & I_{3}
		      	\end{array}\right) \qquad \text{($w_1$, $w_2$, $w_3 \in H_1(\Rb;\C)$)}.$$
For another $f^\prime \in \M{R}$ with
$$\phi(f^\prime) = \left( \begin{array}{c|c}
	       \rho^R_0(f^\prime) &  \begin{matrix} w_1^\prime & w_2^\prime & w_3^\prime \end{matrix} \\
	       \hline
	       0 & I_{3}
	      \end{array}\right) \qquad \text{($w_1^\prime$, $w_2^\prime$, $w_3^\prime
 		                                          \in H_1(\Rb;\C)$)},$$
we have
$$\phi(f f^\prime) = \left( \begin{array}{c|c}
		      \rho^R_0(f f^\prime) &  \begin{matrix} w_1 + f_* w_1^\prime &  w_2 + f_* w_2^\prime & 
					      w_3 + f_* w_3^\prime \end{matrix} \\
		       \hline
		       0 & I_3
		    \end{array}\right).$$
This formula shows for $i=1$, $2$, and $3$, that the correspondence
$f \in \M{R} \mapsto w_i$ defines a crossed homomorphism 
$$c_i: \M{R} \to H_1(\Rb;\C).$$
\par

Now let $d$ be a non-separating simple closed curve on $R$. We fix an orientation of
$d$ and denote its representing homology class by $[\tilde{d}] \in H_1(\Rb;\C)$.
Then, by Theorem \ref{crossed_hom}, there exists a complex number $z_i$ for each $i$ such that
$c_i(t_d) = z_i [\tilde{d}]$. On the other hand, the action of
$t_d$ on $H_1(\Rb;\C)$ is given by \eqref{P-L}.
Consequently, we have $\rank{(\phi(t_d) - I)}= 1$. Since $t_a$ is conjugate to $t_d$ in $\M{S}$,
we may conclude $\rank{(L_a - I)} = 1$, which contradicts the assumption $\Dm{\ea{1}}= 2g-1$.
\par
We may now conclude $\Dm{\ea{1}} \neq 2g-1$. This completes the proof of Theorem \ref{eigen_dim}.
\end{proof}

 \section{The images of generators of $\M{S}$}  \label{image_generators} \par

Let $\phi: \M{S} \to \GL{2g+1, \C}$ be any nontrivial homomorphism and $a$ be any nonseparating simple
closed curve on $S$. Theorem \ref{eigen_dim} in the previous section shows the uniqueness of the Jordan
form of $\phi(t_a)$ for $g \geq 7$. To prove our goal Theorem \ref{surjectivity-thm}, we have only to 
show that this restriction
on $\phi(t_a)$,  together with a certain nontriviality assumption, forces the image of a set of generators
of $\M{S}$ into the expected forms, with respect to some basis of 
$\C^{2g+1}$. As a set of generators, we use the one given by Theorem \ref{generators}.
\par

Among the generators of $\M{S}$, the Dehn twists along the curves
 \begin{itemize}
  \item $a_1$, $b_1$, $a_2$, $b_2$, \ldots, $a_g$, $b_g$;
	$c_1$, $c_2$, \ldots, $c_{g-1}$;
  \item $e_1$, $e_2$, \ldots, $e_p$;
	$f_1$, $f_2$, \ldots, $f_r$
 \end{itemize}
in Figures \ref{fig_gen_1} and \ref{fig_gen_2}, the images of $t_{a_i}$s' and $t_{b_i}$s' have
already been considered by Korkmaz. To state his result, we need to set up some notation.
\ppar

We use the symbol $U$ and $\Uh$ to denote the $2 \times 2$ matrices:
\begin{equation} \label{notation_U}
  U = \begin{pmatrix} 1 & 1 \\ 0 & 1  \end{pmatrix}, \quad
	\Uh = \begin{pmatrix} 1 & 0 \\ -1 & 1 \end{pmatrix}.
\end{equation}

\par 

\begin{definition} \label{A_and_B}
	For $i = 1$, $2$, \ldots, $g$, we define the $2g \times 2g$ matrices $A_i$ and $B_i$
	to be the block diagonal matrices
	\begin{align*}
		A_i & = \diag{I_2, I_2, \ldots, U, I_2, \ldots, I_2}, \\
		B_i & = \diag{I_2, I_2, \ldots, \Uh, I_2, \ldots, I_2},
	\end{align*}
	where $U$ and $\Uh$ are in the $i$th entry.
\end{definition}

We note that $A_i$ and $B_i$ are the images of the Dehn twists $t_{a_i}$ and $t_{b_i}$,
respectively under the symplectic representation 
$\rho_0$ with respect to the following basis $\{x_i, y_i \}$ of $H_1(\bar{S}; \C)$:

\begin{definition} \label{symp-basis}
	We define the basis $\{x_1, y_1, \ldots, x_g, y_g \}$ of 
	$H_\C = H_1(\bar{S};\C)$ as follows. Let $x_i$ and $y_i$ be the oriented curves on 
	$S$ depicted in Figure \ref{homology-basis}. For each $i$, the homology classes $x_i$ and $y_i$
	in $H_\C$ are defined to be the classes represented by the images of the oriented 
	curves denoted by the same symbols under the inclusion $S \hookrightarrow \bar{S}$.
\end{definition}
\begin{figure}
		\centering
		\includegraphics[scale=1.0]{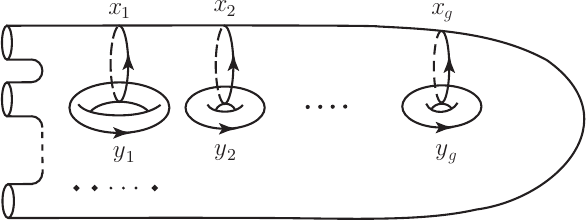}
		\caption{A basis of $H_\C$}
		\label{homology-basis}
\end{figure}

Now the result of Korkmaz can be stated as follows.

\begin{theorem}[\cite{sru}, Lemma 4.7] \label{sru-lemma-4.7}
	Let $g \geq 1$, $m \geq 2g$ and let $\phi: \M{S} \to \GL{m, \C}$ be a homomorphism.
	Let $a$ be any nonseparating simple closed curve on $S$. Suppose that the Jordan form
	of $L_a = \phi(t_a)$ is given by 
	$\left(\begin{array}{cc} U & 0 \\ 0 & I_{m-2}\end{array}\right)$.
	Suppose also that there exits a nonseparating simple closed curve $b \subset S$
	intersecting $a$ transversely at a single point such that $E_1^a \neq E_1^b$.
	Then there exists a basis of $\C^m$ with respect to which 
	$$
		L_{a_i} = \left(\begin{array}{cc}
					A_i & 0 \\
					0   & I_{m-2g}
				  \end{array}\right) 
		\quad \text{and} \quad
		L_{b_i} = \left(\begin{array}{cc}
					B_i & 0 \\
					0   & I_{m-2g}
				  \end{array}\right)
	$$
	for $i=1$, $2$, \ldots, $g$.
\end{theorem}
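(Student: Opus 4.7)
\noindent\emph{Proof proposal.} The plan is to induct on $g$, preceded by a change-of-coordinates reduction. Since any two pairs of nonseparating simple closed curves on $S$ intersecting transversely at a single point are equivalent under some $f \in \M{S}$, I would first replace $\phi$ by its conjugate under $\phi(f)^{-1}$ to reduce to the case $a = a_1$ and $b = b_1$. The base case $g = 1$ involves only $L_{a_1}$ and $L_{b_1}$ and will be treated by an explicit rank-one analysis exploiting the braid relation. For $g \geq 2$, commutation of $t_{a_i}, t_{b_i}$ ($i \geq 2$) with $t_{a_1}, t_{b_1}$ will confine $L_{a_i}, L_{b_i}$ to act as the identity on a distinguished $2$-plane and preserve a complementary $(m-2)$-dimensional subspace, on which the inductive hypothesis then applies.

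For the base case, the Jordan-form hypothesis yields $(L_{a_1} - I)^2 = 0$ and $\rank(L_{a_1} - I) = 1$, so I would write $L_{a_1} = I + v_1 \xi_1$ for a nonzero column vector $v_1 \in \C^m$ and nonzero linear functional $\xi_1 \colon \C^m \to \C$ with $\xi_1(v_1) = 0$, interpreting $v_1 \xi_1$ as the rank-one operator $w \mapsto \xi_1(w)\, v_1$; note $E_1^{a_1} = \ker \xi_1$. Since $b_1$ is $\M{S}$-conjugate to $a_1$, $L_{b_1}$ has the same Jordan form and analogously $L_{b_1} = I + v_1' \xi_1'$. The assumption $\ker \xi_1 \neq \ker \xi_1'$ makes the two rank-one pieces linearly independent. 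Expanding the braid relation $L_{a_1} L_{b_1} L_{a_1} = L_{b_1} L_{a_1} L_{b_1}$ using $(v_1 \xi_1)^2 = 0 = (v_1' \xi_1')^2$ and the identity $(v_1 \xi_1)(v_1' \xi_1')(v_1 \xi_1) = \xi_1(v_1')\, \xi_1'(v_1) \cdot v_1 \xi_1$ gives
$$
\left(1 + \xi_1(v_1')\, \xi_1'(v_1)\right)\left(v_1 \xi_1 - v_1' \xi_1'\right) = 0,
$$
whence $\xi_1(v_1')\, \xi_1'(v_1) = -1$. A scalar rescaling of the representatives (which leaves $L_{a_1}, L_{b_1}$ fixed) allows me to assume $\xi_1(v_1') = 1$ and $\xi_1'(v_1) = -1$. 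Then $v_1, v_1'$ are linearly independent, $\C^m = \langle v_1, v_1' \rangle \oplus V_1$ with $V_1 := \ker \xi_1 \cap \ker \xi_1'$, and in this basis $L_{a_1} = U \oplus I_{m-2}$, $L_{b_1} = \Uh \oplus I_{m-2}$.

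For the inductive step, fix $i \geq 2$; since $a_i, b_i$ are disjoint from $a_1 \cup b_1$, the matrices $L_{a_i}, L_{b_i}$ commute with $L_{a_1}$ and $L_{b_1}$, hence preserve $\langle v_1 \rangle = \im(L_{a_1} - I)$, $\langle v_1' \rangle = \im(L_{b_1} - I)$, and $V_1$. Having only the eigenvalue $1$ (being $\M{S}$-conjugate to $L_{a_1}$), they fix $v_1, v_1'$ pointwise and restrict to operators on $V_1$ of dimension $m-2$. These restrictions inherit Jordan forms $U \oplus I_{m-4}$, $\Uh \oplus I_{m-4}$, the ambient braid and commutation relations, and, for $i=2$, distinct $1$-eigenspaces: indeed $(a_2, b_2)$ is $\M{S}$-conjugate to $(a_1, b_1)$, so $E_1^{a_2} \neq E_1^{b_2}$, and since $\langle v_1, v_1' \rangle \subset E_1^{a_2} \cap E_1^{b_2}$, a dimension count forces the restrictions to $V_1$ to differ. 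The inductive hypothesis on $V_1$ then supplies a basis adapted to $L_{a_i}|_{V_1}, L_{b_i}|_{V_1}$ for $2 \leq i \leq g$, and prepending $v_1, v_1'$ completes the basis on $\C^m$. The principal obstacle will be the base-case braid computation: identifying the rank-one structure and tracking its algebra through the triple product requires the nilpotency-based cancellation above, after which the inductive step is a routine consequence of commutation together with the change-of-coordinates principle within $\M{S}$.
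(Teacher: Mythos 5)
Your argument is correct, and it is worth noting that the paper itself does not prove this statement: it is quoted from Korkmaz's preprint, so there is no in-paper proof to measure you against. Your route --- writing $L_{a_1}=I+v_1\xi_1$, $L_{b_1}=I+v_1'\xi_1'$ as rank-one perturbations, extracting $\xi_1(v_1')\,\xi_1'(v_1)=-1$ from the braid relation (your displayed identity is exactly what the expansion with $N^2=N'^2=0$ gives, and $N\neq N'$ follows from $E_1^{a_1}\neq E_1^{b_1}$), normalizing by the scaling freedom, and then inducting on genus through the splitting $\C^m=\langle v_1,v_1'\rangle\oplus V_1$ --- is the natural rank-one-plus-braid-relation argument and is sound, including the check that $L_{a_1},L_{b_1}$ act as the identity on $V_1$ so the later change of basis inside $V_1$ does not disturb them, and the observation that $E_1^{a_2}\neq E_1^{b_2}$ forces the restricted eigenspaces in $V_1$ to differ because eigenspaces split along an invariant direct sum. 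One point should be made explicit in a full write-up: the inductive hypothesis is a statement about a representation of the mapping class group of a genus-$(g-1)$ surface, not about a tuple of matrices satisfying braid and commuting relations. So you should pass through the complementary subsurface $S'$ of a regular neighbourhood of $a_1\cup b_1$ (genus $g-1$, one extra boundary component, containing $a_2,b_2,\ldots,a_g,b_g$): every element of $\M{S'}$ commutes with $t_{a_1}$ and $t_{b_1}$, hence its image preserves $\langle v_1\rangle$, $\langle v_1'\rangle$, $\ker\xi_1$, $\ker\xi_1'$ and therefore $V_1$, giving a homomorphism $\psi:\M{S'}\to\GL{V_1}\cong\GL{m-2,\C}$ with $m-2\geq 2(g-1)$, to which the genus-$(g-1)$ case applies with $(a,b)=(a_2,b_2)$; one also identifies $S'$ with the model genus-$(g-1)$ surface by a homeomorphism carrying its standard chain to $a_2,b_2,\ldots,a_g,b_g$ (change of coordinates for chain configurations). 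These are routine completions of exactly the mechanism you describe, not gaps in the idea.
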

\ppar

Next, we describe two theorems which will control the images of the remaining generators of $\M{S}$
under any nontrivial {\em $(2g+1)$-dimensional} representation. We need to set up some further 
notation.

\begin{definition} \label{L_and_Ck}
 We set 
 $$ L = \begin{pmatrix} 1 &  1 & 0 & -1 \\ 
	               0 &  1 & 0 &  0 \\
	               0 & -1 & 1 &  1 \\
	               0 &  0 & 0 &  1
	\end{pmatrix}$$
 and 
 $$C_k = \diag{I_{2k-2}, L, I_{2g-2k-2}} \in \GL{2g, \C}$$
 for  $k =1$, $2$, \ldots, $g-1$.
\end{definition}

We note $C_k$ is the image of $t_{c_k}$ under the symplectic representation $\rho_0$
with respect to the basis $\{ x_i, y_i \}$.
\par

The following theorem will provide the control of the images of $t_{c_k}$s'.

\begin{theorem} \label{prelim_thm:2g+1}
 Let $g \geq 2$, and let 
  $$\At_i = \begin{pmatrix} A_i & 0 \\ 0 & 1 \end{pmatrix}, \, 
 \Bt_i = \begin{pmatrix} B_i & 0 \\ 0 & 1 \end{pmatrix} \in \GL{2g+1,\C}$$
 for $1 \leq i \leq g$. For each $k$ with $1 \leq k \leq g-1$, 
 suppose $\Xt_k \in \GL{2g+1, \C}$ satisfies the following conditions (i)--(iv):
\begin{enumerate}
 \item[(i)] $\Xt_k$ has exactly one eigenvalue $1$.
 \item[(ii)] $\Xt_k \At_i = \At_i \Xt_k$ for $i=1$,$2$, \ldots, $g$.
 \item[(iii)] If $g \geq 3$, then $\Xt_k \Bt_j = \Bt_j \Xt_k$ for each $j$ satisfying $1 \leq j \leq g$ with
	      $j \neq k$, $k+1$. 
 \item[(iv)] $\Xt_k \Bt_j \Xt_k = \Bt_j \Xt_k \Bt_j$ for $j=k$, $k+1$.
\end{enumerate}
 Then, there exist nonzero complex numbers $p_1$, $p_2$, \ldots, $p_{g-1}$ such that
 for  
 $$P=\diag{I_2, p_1 I_2, p_2 I_2, \ldots,p_{g-1} I_2} \in \GL{2g,\C} \quad \text{and}\quad 
 \Pt = \begin{pmatrix}P & 0 \\ 0 & 1 \end{pmatrix},$$  it holds
 $\Pt^{-1} \At_i \Pt = \At_i$ and $\Pt^{-1} \Bt_i \Pt = \Bt_i$ for each $i=1$, $2$, \ldots $g$, 
 and furthermore, it also holds
 for each $k=1$, $2$, \ldots, $g-1$, 
\begin{equation} \label{conj_xtk}
 \Pt^{-1} \Xt_k \Pt = \begin{pmatrix} C_k & \vect{w_k} \\ \transpose{\vect{s_k}} & 1 \end{pmatrix}
\end{equation}
 where $\vect{w_k}$, $\vect{s_k} \in \C^{2g}$ with either $\vect{w_k} = \vect{0}$ or $\vect{s_k} = \vect{0}$.
\end{theorem}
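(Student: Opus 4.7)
The proof naturally divides into three stages: exploiting the commutation relations (ii) and (iii) to determine the block structure of $\Xt_k$; using condition (i) and the braid identities (iv) to pin down the remaining free entries; and constructing the conjugating matrix $\Pt$ inductively.

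First, I would write $\Xt_k$ in block form with $2 \times 2$ blocks $X_{ij}$ ($1 \leq i,j \leq g$) in the upper-left $2g \times 2g$ portion, a final column $\vect{u} \in \C^{2g}$, a final row $\vect{v}^T$, and corner $x \in \C$. Since $\At_i - I$ is supported on the $(i,i)$ block with value $U - I_2$, expanding $\Xt_k \At_i = \At_i \Xt_k$ block by block forces: each diagonal block $X_{ii}$ is of the form $a_i I_2 + b_i (U - I_2)$; each off-diagonal $X_{ij}$ is a scalar multiple of $U - I_2$; the coordinates of $\vect{u}$ in the second slot of each $2$-block vanish, and the coordinates of $\vect{v}$ in the first slot of each $2$-block vanish. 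Adjoining the commutations with $\Bt_j$ for $j \neq k, k+1$ (using that $\Bt_j - I$ is supported on the $(j, j)$-block with value $\Uh - I_2$) further kills every $X_{ij}$ with an index outside $\{k, k+1\}$ except for the scalar diagonal $a_i I_2$, and likewise the remaining coordinates of $\vect{u}$, $\vect{v}$ outside the $k$-th and $(k+1)$-th blocks. Thus $\Xt_k$ differs from $\diag{a_1 I_2, \ldots, a_g I_2, x}$ only within the $5 \times 5$ sub-block indexed by $\{2k-1, 2k, 2k+1, 2k+2, 2g+1\}$.

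The characteristic polynomial then factors as $(x - \lambda)\prod_{i=1}^g (a_i - \lambda)^2$, so condition (i) forces $a_1 = \cdots = a_g = x = 1$. Naming the remaining free parameters $b_k, b_{k+1}$ for the upper off-diagonals of the $(k,k)$- and $(k+1,k+1)$-blocks, $c_1, c_2$ for the scalars of $X_{k,k+1}$ and $X_{k+1,k}$, $u_1, u_2$ for the two potentially nonzero entries of $\vect{u}$, and $v_1, v_2$ for those of $\vect{v}$, I would match entries in $\Xt_k \Bt_k \Xt_k = \Bt_k \Xt_k \Bt_k$ restricted to the $5 \times 5$ block. This should yield $b_k = 1$, the linking relations $u_2 = c_2 u_1$ and $v_2 = c_1 v_1$, the vanishing $u_1 v_1 = 0$, and $b_{k+1} = c_1 c_2$ (after substituting the linking relations back). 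The symmetric identity with $\Bt_{k+1}$ analogously yields $b_{k+1} = 1$ and $b_k = c_1 c_2$. Combining gives $b_k = b_{k+1} = 1$, $c_1 c_2 = 1$, and either $u_1 = u_2 = 0$ or $v_1 = v_2 = 0$; this dichotomy is precisely the alternative ``$\vect{w_k} = \vect{0}$ or $\vect{s_k} = \vect{0}$'' of the conclusion.

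Finally, setting $p_0 = 1$ and defining $p_k = -p_{k-1}/c_1(k)$ inductively (valid because $c_1(k) c_2(k) = 1$ implies $c_1(k) \neq 0$), I would check that $\Pt = \diag{I_2, p_1 I_2, \ldots, p_{g-1} I_2, 1}$ simultaneously puts every $\Xt_k$ into the desired form: conjugation multiplies $c_1$ by $p_k/p_{k-1}$ to yield $-1$, multiplies $c_2$ by $p_{k-1}/p_k$ to yield $-1$, leaves each diagonal $b_k$ unchanged, and scales the coordinates of $\vect{u}$, $\vect{v}$ covariantly so that the dichotomy persists; moreover $\Pt$ commutes with every $\At_i$ and $\Bt_i$ because each $2 \times 2$ block of $P$ is a scalar matrix. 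The main obstacle will lie in the middle stage: the two braid identities on the $5 \times 5$ sub-block generate on the order of two dozen scalar equations, and it is only by careful bookkeeping that one extracts the short list of relations above. This is presumably why the authors defer the bulk of these computations to Section \ref{braid_commuting}.
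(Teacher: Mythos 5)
Your proposal is correct, and its computational core coincides with the paper's, but your route from the local computation to the full statement is genuinely different. The paper isolates the block analysis in Lemma \ref{key_lemma}, stated only for the block position $(1,2)$ and in the more general setting of an $m$-dimensional tail (where the dichotomy can fail; see Remark \ref{weakness}); it specializes to $m=1$ in Theorem \ref{consequence:2g+1_1}, and then reaches general $k$ by conjugating with powers of $\Gt$, the image of a $1/g$-rotation under the symplectic representation, constructing the conjugator inductively as a product $P_1P_2\cdots P_{g-1}$ and checking at each step---via the fact that $\vect{w_i}$, $\vect{s_i}$ are supported in rows $2i-1$ through $2i+2$---that the new factor does not disturb the already-normalized $\Xt_1,\ldots,\Xt_k$. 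You instead perform the commutation/braid analysis directly in the relevant $5\times 5$ block for every $k$ (legitimate, since the computation at indices $k,k+1$ is verbatim the one at $1,2$; your localization of $\Xt_k$, the factorization of its characteristic polynomial forcing $a_i=x=1$, and the derived relations $b_k=b_{k+1}=1$, $c_1c_2=1$, $u_1v_1=0$ together with the linking relations all match what the paper obtains in Section \ref{braid_commuting} in the case $m=1$), and you then build the single conjugator in one stroke via the recursion $p_k=-p_{k-1}/c_1(k)$, using that conjugation by $\Pt$ affects $\Xt_k$ only through the ratio $p_k/p_{k-1}$, that $c_1(k)c_2(k)=1$ makes the two rescaling requirements for a fixed $k$ equivalent, and that the ratios for distinct $k$ are independent. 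Your assembly is shorter and dispenses with the rotation matrix, the induction on $k$, and the non-interference bookkeeping; what the paper's organization buys is that the $k=1$ computation is literally reused rather than repeated by symmetry, and that its extra generality in $m$ is what underlies Remark \ref{weakness} on the limits of the method.
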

\par
\begin{remark}
 Conversely, if each $\Xt_k$ is given as the right-hand side of  \eqref{conj_xtk}, 
 it is easy to see the conditions (i)-(iv) are satisfied.
\end{remark}
\ppar

The next theorem will provide the control of the images of the rest of the generators.

\begin{theorem} \label{for_extra_gen}
 Let $g \geq 2$, and $\At_i$, $\Bt_i$ as in Theorem \ref{prelim_thm:2g+1}. 
 Suppose the matrix $\Ft \in \GL{2g+1,\C}$
 satisfies the following conditions (i)-(iv):
\begin{enumerate}
 \item[(i)] $\Ft$ has exactly one eigenvalue $1$,
 \item[(ii)] $\Ft \At_i = \At_i \Ft$ for $1 \leq i \leq g$.
 \item[(iii)] $\Ft \Bt_j = \Bt_j \Ft$ for $2 \leq j \leq g$.
 \item[(iv)] $\Ft \Bt_1 \Ft = \Bt_1 \Ft \Bt_1$.
\end{enumerate}
 \par
 Then, 
 $\Ft = \left(\begin{array}{cc}
	     A_1 & \vect{w} \\ 
	      \transpose{\vect{s}} & 1
	      \end{array}\right)$ 
where $\vect{w}$, $\vect{s} \in \C^{2g}$ with either $\vect{w} = \vect{0}$ or $\vect{s} = \vect{0}$.
\end{theorem}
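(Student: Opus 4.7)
The plan is to exploit conditions (ii) and (iii) as centralizer constraints that force $\Ft$ into an extremely rigid block form, and then to fix the remaining parameters using the eigenvalue hypothesis (i) and the braid relation (iv). Write
$$\Ft = \begin{pmatrix} F & \vect{w} \\ \transpose{\vect{s}} & x \end{pmatrix}, \qquad F \in M_{2g}(\C),\ \vect{w},\vect{s}\in\C^{2g},\ x\in\C.$$
Condition (ii) decouples into $FA_i = A_i F$, $A_i\vect{w}=\vect{w}$, $\transpose{\vect{s}}A_i=\transpose{\vect{s}}$ for $1\le i\le g$, and condition (iii) gives the analogous three equations with $B_j$ replacing $A_i$ for $2\le j\le g$. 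Viewing $F$ as a $g\times g$ array of $2\times 2$ blocks $F_{jk}$ and writing $E_{ij}$ for matrix units, I observe: for each off-diagonal pair $j\ne k$, the $A_j$- and $A_k$-commutation conditions yield $E_{12}F_{jk}=F_{jk}E_{12}=0$, so $F_{jk}=bE_{12}$ for some $b\in\C$; if either $j\ge 2$ or $k\ge 2$, a further $B$-commutation uses $E_{21}E_{12}=E_{22}$ or $E_{12}E_{21}=E_{11}$ to force $b=0$, whence $F_{jk}=0$. Each diagonal block $F_{jj}$ commutes with $U$, and also with $\Uh$ if $j\ge 2$, giving $F_{jj}=a_jI_2$ for $j\ge 2$ and $F_{11}=\begin{pmatrix} a_1 & b_1 \\ 0 & a_1 \end{pmatrix}$. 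The same centralizer analysis performed coordinate-by-coordinate to the invariance conditions on $\vect{w}$ and $\transpose{\vect{s}}$ shows $\vect{w}=w_1\vect{e}_1$ and $\transpose{\vect{s}}=s_2\transpose{\vect{e}_2}$.

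At this stage $\Ft$ has nontrivial entries outside its block-diagonal part only in rows and columns $1,2,2g+1$, so an explicit $3\times 3$ determinant yields the characteristic polynomial $(t-a_1)^2(t-x)\prod_{j\ge 2}(t-a_j)^2$; condition (i) thus forces $a_1=\cdots=a_g=x=1$, leaving only $b_1,w_1,s_2$ free. Expanding the braid relation $\Ft\Bt_1\Ft=\Bt_1\Ft\Bt_1$ into its four block equations, the $(2,2)$-entry reads $s_2w_1=0$, so either $\vect{w}=\vect{0}$ or $\vect{s}=\vect{0}$, and in particular $\vect{w}\transpose{\vect{s}}=0$; the $(1,1)$-block equation then collapses to $FB_1F=B_1FB_1$. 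Since $F$ is block diagonal with only $F_{11}$ nonscalar, this reduces to the $2\times 2$ identity $V\Uh V=\Uh V\Uh$ with $V=\begin{pmatrix} 1 & b_1 \\ 0 & 1 \end{pmatrix}$, whose $(2,1)$-entry reads $-1=b_1-2$ and thus forces $b_1=1$; hence $F_{11}=U$ and $F=A_1$, as required. The $(1,2)$- and $(2,1)$-block equations of the braid relation turn out to be automatically satisfied once $b_1=1$.

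The main obstacle is the centralizer bookkeeping in the first step, and in particular the asymmetry introduced by condition (iii) excluding the index $j=1$. Because $\Bt_1$-commutation is not assumed, the first block row and column of $F$ and the first two coordinates of $\vect{w},\transpose{\vect{s}}$ are underconstrained by $B$-commutation alone; one must verify carefully that the combination of $A$-commutation over all indices with $B$-commutation over $k\ge 2$ still suffices to kill every off-diagonal cross block $F_{1k},F_{k1}$ ($k\ge 2$) and every unwanted coordinate of $\vect{w}$ and $\transpose{\vect{s}}$. Once this asymmetric step is handled cleanly, the eigenvalue calculation and the braid-relation check are short symbolic manipulations.
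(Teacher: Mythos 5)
Your proposal is correct and takes essentially the same route as the paper: a block-matrix centralizer computation (commutation with the $\At_i$ for all $i$ and the $\Bt_j$ for $j\ge 2$) forcing $F$ to be block diagonal with only $F_{11}$ non-scalar and $\vect{w}$, $\vect{s}$ supported in the first two coordinates, then the single-eigenvalue hypothesis to set all diagonal parameters and the corner entry to $1$, and finally the braid relation with $\Bt_1$ to extract $w_1 s_2=0$ and $b_1=1$. The paper packages the identical computation as the general-$m$ Lemma \ref{an_analogue}, reusing the equations from the proof of Lemma \ref{key_lemma} and Theorem \ref{sru-lemma-2.3}, so the difference is only organizational.
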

\par

Conversely, it is clear that $\Ft$ given as the consequence of the theorem satisfies all the conditions (i)-(iv)
in the theorem.

\begin{remark}
 The above conditions (i)-(iv) for $\Ft$ in Theorem \ref{for_extra_gen} 
 resemble but do not coincide with the conditions for $\Xt_1$ in Theorem \ref{prelim_thm:2g+1}.
 In fact, due to the difference, the consequence of the former theorem does not 
 need to take conjugation of $\Ft$ unlike the latter theorem.
\end{remark}

The proofs of Theorems \ref{prelim_thm:2g+1} and \ref{for_extra_gen} are straightforward 
matrix computation, which are elementary but rather long. Therefore, we postpone them to 
Section \ref{braid_commuting}.

 \section{A dichotomy of representations} \label{dichotomy}
\par
In this section, we combine the results in previous sections to complete the proof of 
Theorem \ref{surjectivity-thm}. To do this, the following dichotomy result is crucial.

\begin{theorem} \label{dichotomy_lemma}
 Assume $g \geq 7$. Let $\phi: \M{S} \to \GL{2g+1, \C}$ be any nontrivial linear representation.
 Then, with respect to some basis of $\C^{2g+1}$, one of the following holds:
\begin{itemize}
 \item[(A)] For each $f \in \M{S}$, $\phi(f)$ has the form
		  $$\left(\begin{array}{cc} F & \vect{w} \\ 0 & 1 \end{array}\right)
	               \qquad \text{($F \in \GL{2g, \C}$, $\vect{w} \in \C^{2g}$)}.$$
 \item[(B)] For each $f \in \M{S}$, $\phi(f)$ has the form
	    $$\left(\begin{array}{cc} F & 0 \\ \transpose{\vect{s}} & 1 \end{array}\right)
	               \qquad \text{($F \in \GL{2g, \C}$, $\vect{s} \in \C^{2g}$)}.$$
\end{itemize}
\end{theorem}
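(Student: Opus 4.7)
The plan is to exhibit a basis of $\C^{2g+1}$ in which either the hyperplane $V=\mathrm{span}\{e_1,\ldots,e_{2g}\}$ is $\M{S}$-invariant with trivial $1$-dimensional quotient action (case (A)), or the line $L=\C e_{2g+1}$ is $\M{S}$-invariant and acted on trivially (case (B)). The starting point is Theorem \ref{eigen_dim}: for every nonseparating simple closed curve $a$ on $S$, the matrix $L_a=\phi(t_a)$ has unique eigenvalue $1$ with $\Dm{\ea{1}}=2g$, so its Jordan form is $\diag{U,I_{2g-1}}$. If there exist nonseparating $a,b$ intersecting transversely at a single point with $\ea{1}=\eb{1}$, then by Theorem \ref{sru-lemma-4.3} the subspace $\ea{1}$ is $\M{S}$-invariant of dimension $2g$; the induced character on the $1$-dimensional quotient sends every Dehn twist $t_a$ to its sole eigenvalue $1$, and is therefore trivial, so choosing a basis adapted to $\ea{1}$ yields case (A). Otherwise, Theorem \ref{sru-lemma-4.7} applies and provides a basis in which $L_{a_i}=\diag{A_i,1}$ and $L_{b_i}=\diag{B_i,1}$ for $i=1,\ldots,g$; in this basis both $V$ and $L$ are invariant under the subgroup generated by $t_{a_i},t_{b_i}$.

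With this basis fixed, I would apply Theorem \ref{prelim_thm:2g+1} to each $L_{c_k}$: hypothesis (i) follows from Theorem \ref{eigen_2g+1}, and (ii)--(iv) reflect that $c_k$ is disjoint from all $a_i$ and from $b_j$ for $j\notin\{k,k+1\}$ but intersects $b_k,b_{k+1}$ transversely at one point each. This yields a diagonal conjugation $\Pt$ preserving the $L_{a_i},L_{b_i}$ and bringing each $L_{c_k}$ into the form
\begin{equation*}
\Pt^{-1} L_{c_k}\Pt=\begin{pmatrix}C_k & \vect{w}_k\\ \transpose{\vect{s}_k} & 1\end{pmatrix}
\end{equation*}
with either $\vect{w}_k=\vect{0}$ or $\vect{s}_k=\vect{0}$. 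An analogous application of Theorem \ref{for_extra_gen} handles each $L_{e_j}$ and $L_{f_l}$. Call a generator outside $\{t_{a_i},t_{b_i}\}$ of \emph{type A} if the bottom-row block $\transpose{\vect{s}}$ of its image vanishes (so $V$ is preserved) and of \emph{type B} if the right-column block $\vect{w}$ vanishes (so $L$ is preserved and fixed); the theorem then reduces to showing that all such generators can be taken of a single common type.

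Suppose for contradiction some generator $\tau_A$ is strictly of type A ($\vect{w}_{\tau_A}\neq\vect{0}$, $\vect{s}_{\tau_A}=\vect{0}$) and another $\tau_B$ strictly of type B. If the underlying curves are disjoint, the commutation $\phi(\tau_A)\phi(\tau_B)=\phi(\tau_B)\phi(\tau_A)$, combined with the commutativity of the top-left symplectic blocks, collapses the $(1,1)$-block identity to the rank-$1$ equation $\vect{w}_{\tau_A}\transpose{\vect{s}_{\tau_B}}=\vect{0}$, forcing one of $\vect{w}_{\tau_A},\vect{s}_{\tau_B}$ to vanish -- a contradiction. Hence strict-A and strict-B generators must always correspond to intersecting curves. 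The principal obstacle is to close off this remaining intersecting case: the braid relation $\phi(\tau_A)\phi(\tau_B)\phi(\tau_A)=\phi(\tau_B)\phi(\tau_A)\phi(\tau_B)$ only imposes that $\vect{w}_{\tau_A}$ be a simultaneous $1$-eigenvector of the two top-left symplectic blocks, $\transpose{\vect{s}_{\tau_B}}$ a simultaneous left $1$-eigenvector, and $\transpose{\vect{s}_{\tau_B}}\vect{w}_{\tau_A}=-1$, none of which is immediately contradictory. My plan is to exploit the combinatorics of the generating set: for $g\geq 7$ the intersection graph of the generators $c_k,e_j,f_l$ is rich enough (e.g., among $c_1,\ldots,c_{g-1}$ only adjacent pairs intersect, so for any adjacent pair $(c_k,c_{k+1})$ there exists $c_m$ disjoint from both) that, by iteratively applying the disjoint-pair commutation argument to a third generator disjoint from $\tau_A$ or $\tau_B$, that third generator is forced to be ``neutral'' ($\vect{w}=\vect{s}=\vect{0}$); chaining these neutralizations through the generating set, together with the explicit eigenspace structure of the blocks $C_k$ (and the analogues for $L_{e_j},L_{f_l}$) arising from the braid constraints above, propagates enough vanishing to contradict the standing assumption that both $\vect{w}_{\tau_A}$ and $\vect{s}_{\tau_B}$ are nonzero.
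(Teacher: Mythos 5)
Your proposal follows the paper's route up to the crucial last step: Theorem \ref{eigen_dim} pins down the Jordan form, Theorem \ref{sru-lemma-4.7} normalizes $L_{a_i},L_{b_i}$, Theorems \ref{prelim_thm:2g+1} and \ref{for_extra_gen} put each $L_{c_k}$, $L_{e_j}$, $L_{f_l}$ into the form $\left(\begin{smallmatrix} * & \vect{w} \\ \transpose{\vect{s}} & 1\end{smallmatrix}\right)$ with $\vect{w}=\vect{0}$ or $\vect{s}=\vect{0}$, and the commutation computation $\vect{w}\transpose{\vect{s}}=0$ rules out a strictly type-A and a strictly type-B generator whose curves are disjoint. (Your opening variation, sending the case $\ea{1}=\eb{1}$ directly to case (A) via the trivial quotient character rather than deriving a contradiction with nontriviality as the paper does, is harmless.) The genuine gap is in how you try to finish. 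You claim that among $c_1,\ldots,c_{g-1}$ ``only adjacent pairs intersect'' and that the remaining obstacle is a pair of \emph{intersecting} generators of opposite strict types, for which you only offer an unexecuted plan (braid-relation constraints plus a propagation-of-vanishing scheme). As written, the proof stops at a sketch, and moreover the premise is wrong: in the configuration of Theorem \ref{generators} the curves $c_1,\ldots,c_{g-1}$ are pairwise disjoint, as are $e_1,\ldots,e_p,f_1,\ldots,f_r$, and every $e_j$, $f_l$ is disjoint from every $c_k$; the only intersections among the generating curves involve the $a_i$, $b_i$, whose images are block diagonal and hence of both types, so they impose no constraint. Thus the ``intersecting case'' you set out to treat never arises, and nothing you wrote handles the case you thought remained.

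The repair is immediate and is exactly how the paper concludes: since all of $c_k$, $e_j$, $f_l$ are pairwise disjoint and their upper-left blocks ($C_k$ resp.\ $A_1$) commute, being images under $\rho_0$ of commuting Dehn twists, your disjoint-pair computation already forces a single common type for \emph{all} of these generators simultaneously (generators with $\vect{w}=\vect{s}=\vect{0}$ are of both types and cause no trouble). Together with the block-diagonal $L_{a_i}$, $L_{b_i}$, every generator then lies in one of the two subgroups of $\GL{2g+1,\C}$ described in (A) and (B), which proves the dichotomy. One further small point of bookkeeping you should make explicit: apply Theorem \ref{for_extra_gen} \emph{after} the conjugation by $\Pt$ coming from Theorem \ref{prelim_thm:2g+1} (this is safe since $\Pt$ fixes the $\At_i$, $\Bt_i$ and Theorem \ref{for_extra_gen} requires no further change of basis), so that all normal forms hold in one and the same basis.
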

\par

\subsection{Proof of Theorem \ref{dichotomy_lemma}}\par

Assume $g \geq 7$. Let $\phi: \M{S} \to \GL{2g+1, \C}$ be an arbitrary non-trivial homomorphism.
We choose a nonseparating simple closed curve $a$ on $S$, and set 
$L_a := \phi(t_a)$. By Theorem \ref{eigen_dim},  $L_a$ has a unique eigenvalue $1$, 
and it holds $\Dm{\ea{1}} = 2g$.
We choose a nonseparating simple closed curve $b$ which intersects $a$ transversely at a single point.
\par
We first observe $\ea{1} \neq \eb{1}$. Indeed, if $\ea{1} = \eb{1}$, then 
Theorems \ref{sru-theorem-3.1} and \ref{ldl-lemma-4.2} imply
$\ea{1} = \eigensp{1}{x}$ for any nonseparating
simple closed curve $x$ on $S$. Since the Dehn twists along
such $x$s' generate $\M{S}$, $\ea{1}$ is $\M{S}$-invariant via $\phi$, and the action of $\M{S}$ on 
$\ea{1}$ is trivial. We may now change the basis of $\C^{2g+1}$ so that its first $2g$ 
elements form a basis of $\ea{1}$ to obtain 
$\phi(f) = \begin{pmatrix}	I & * \\ 0 & 1 	   \end{pmatrix}$ for each $f \in \M{S}$.
This shows $\im{\phi}$ is an abelian group. We then see $\im{\phi}$ is trivial by Theorem 
\ref{abelianization}. This contradicts to $\Dm{\ea{1}} = 2g$, and therefore, we have $\ea{1} \neq \eb{1}$.
\ppar

 Now, to complete the proof of Theorem \ref{dichotomy_lemma}, we have only to 
 prove the theorem for $f$ in the fixed generating set of $\M{S}$ given by Theorem \ref{generators}:
 the Dehn twists along the nonseparating simple closed curves 
 \begin{align*}
  & a_1, b_1, \ldots, a_g, b_g; c_1, c_2, \ldots, c_{g-1}; \\
  & e_1, e_2, \ldots, e_p; f_1, f_2, \ldots, f_r
 \end{align*}
 depicted in Figures \ref{fig_gen_1} and \ref{fig_gen_2}. As usual, we denote $L_c = \phi(t_c)$ 
 for a simple closed curve $c$ on $S$.
\par

 Since $\dim{(E_1^a)} = 2g$, the Jordan form of $L_a$ is given by 
 $$\left(\begin{array}{c|c}
   \begin{matrix}
    1 & 1 \\ 0 & 1
   \end{matrix} & 0  \\
	 \hline 0 & I_{2g-1}
	 \end{array}\right).$$
 Therefore, we can apply Theorem \ref{sru-lemma-4.7} to obtain
 \begin{equation*}
	   L_{a_i} = \left( 
		      \begin{array}{cc}
		       A_i &  0 \\
			0 & 1
		      \end{array}
		     \right), \quad
	   L_{b_i} = \left( 
		      \begin{array}{cc}
	               B_i & 0 \\
		        0  & 1 
		      \end{array}
		     \right) \quad \text{ for $i=1$, $2$, \ldots, $g$}
 \end{equation*}
 after changing the basis of $\C^{2g+1}$ appropriately.
 Here, the matrices $A_i$ and $B_i$ are the ones given in Definition \ref{A_and_B}.
 Then by putting $\Xt_k = L_{c_k}$ for each $k=1$, $2$, \ldots, $g-1$,
 we can apply Theorem \ref{prelim_thm:2g+1} to obtain 
 $$L_{c_k} = \begin{pmatrix} C_k & \vect{w_k} \\ \transpose{\vect{s_k}} & 1 \end{pmatrix}\, 
   \text{ for each $k$}$$
 after changing the basis of $\C^{2g+1}$ further, with $L_{a_i}$ and $L_{b_i}$ unchanged. 
 Here, the matrix $C_k$ is the one given in Definition \ref{L_and_Ck}, and $\vect{w_k}$, 
 $\vect{s_k} \in \C^{2g}$ with either $\vect{w_k} = \vect{0}$ or $\vect{s_k} = \vect{0}$.
 \ppar

 Now we note $c_i$ and $c_j$ are disjoint for any $i \neq j$, and therefore $t_{c_i}$  and $t_{c_j}$, 
 and hence $L_{c_i}$ and $L_{c_j}$, are commutative. By the same reason, the matrices $C_i$ and $C_j$
 are also commutative since they coincide with the images of the Dehn twists $t_{c_i}$ and $t_{c_j}$, 
 respectively, under $\rho_0$ with respect to the basis  $\{x_1, y_1, \ldots, x_g, y_g\}$ given in 
 Definition \ref{symp-basis}. This implies either 
 \begin{equation*}
  \vect{w_1} = \vect{w_2} = \cdots = \vect{w_{g-1}} = \vect{0}
 \end{equation*}
 or 
\begin{equation*}
 \vect{s_1} = \vect{s_2} = \cdots = \vect{s_{g-1}} = \vect{0}.
\end{equation*}
Indeed, for $X$, $Y \in \GL{2g, \C}$ and column vectors $\vect{w}$, $\vect{s} \in \C^{2g}$, 
 we see
\begin{align*}
 \left(\begin{array}{cc} X & \vect{w} \\ 0 & 1 \end{array}\right) 
 \left(\begin{array}{cc} Y &  0 \\  \transpose{\vect{s}} & 1 \end{array}\right) 
 & = \left(\begin{array}{cc} XY + \vect{w} \transpose{\vect{s}} & \vect{w} \\ \transpose{\vect{s}} & 1 \end{array}\right), \\[3pt]
 \left(\begin{array}{cc} Y &  0 \\ \transpose{\vect{s}} & 1 \end{array}\right) 
 \left(\begin{array}{cc} X & \vect{w} \\ 0 & 1 \end{array}\right) 
 & = \left(\begin{array}{cc} YX & Y \vect{w} \\ 
	   \transpose{\vect{s}}X & \transpose{\vect{s}} \vect{w} + 1 \end{array}\right).
\end{align*}
Hence, if $X$ and $Y$ are commutative, comparing the upper left blocks of these two matrix 
multiplications, we see the two matrices $\left(\begin{array}{cc} X & \vect{w} \\ 0 & 1 \end{array}\right)$
and $\left(\begin{array}{cc} Y & 0  \\ \transpose{\vect{s}} & 1 \end{array}\right)$ 
are commutative only if the $2g \times 2g$ matrix $\vect{w} \transpose{\vect{s}}$ 
is equal to the zero matrix, and in that case, it holds either $\vect{w} = \vect{0}$ 
or $\vect{s} = \vect{0}$.
\par
 
 Therefore, we see that either all of $L_{c_i}$s' are of type (A), or all of $L_{c_i}$s' are of type (B).
 
 Next, choose any simple closed curve $e \in \{ e_1, e_2, \ldots, e_p, f_1, f_2, \ldots, f_r \}$,
 and set $L_e := \phi(t_e)$. Then $e$ intersects $b_1$ transversely at a single point and is disjoint from 
 $a_1$, $a_2$, \ldots, $a_g$ and $b_2$, $b_3$, \ldots, $b_g$.
 Hence $t_e$ and $t_{b_1}$ satisfy the braid relation, and $t_e$ commutes with $t_{a_1}$,
 $t_{a_2}$, \ldots, $t_{a_g}$, and $t_{b_2}$, $t_{b_3}$, \ldots, $t_{b_g}$. Therefore, we can apply Theorem
 \ref{for_extra_gen} with $\Ft = L_e$ to obtain either
		  (A) $L_e = \left(\begin{array}{cc} A_1 & \vect{w} \\ 0 & 1 \end{array}\right)$
                          with some $\vect{w} \in \C^{2g}$, or
		  (B) $L_e = \left(\begin{array}{cc} A_1 & 0 \\ \transpose{\vect{s}} & 1 \end{array}\right)$
                          for some $\vect{s} \in \C^{2g}$.
 Furthermore, since $e_1$, $e_2$, \ldots, $e_p$, and $f_1$, $f_2$, \ldots, $f_r$ are pairwise disjoint, the images
 of Dehn twists along them are all commutative. Therefore, by the previous argument above, the types (A) or (B) for
 the images of these Dehn twists are all the same and do not depend on the choice of $e$. 
 \par

 Finally, since $e$ is disjoint from any $c_i$, $L_e$ commutes with $L_{c_i}$. Therefore, the type (A) or (B) for
 $L_e$ coincides with that for $L_{c_i}$s'. This completes the proof of Theorem \ref{dichotomy_lemma}.
\qed
\ppar

\subsection{Proof of Theorem \ref{surjectivity-thm}}\par

We now prove Theorem \ref{surjectivity-thm}. Let $\phi: \M{S} \to \GL{2g+1, \C}$ be any nontrivial linear
representation. We need to prove that either $[\phi] = \sigma([\phi_c])$ or 
$\iota([\phi]) = \sigma([\phi_c])$ for some crossed homomorphism $c: \M{S} \to H_\C = \C^{2g}$. By Theorem 
\ref{dichotomy_lemma}, we have only to consider the cases (A) and (B) in the theorem.

\subsubsection*{The case (A)} \par
For each $f \in \M{S}$, 
$\phi(f)$ has the form 
	 $$\phi(f) = \left(\begin{array}{cc} F &  \vect{w} \\ 0 & 1 \end{array}\right)
	 \text{with $F \in \GL{2g, \C}$ and $\vect{w} \in \C^{2g}$.}$$
The correspondence $f \mapsto F$ defines a linear representation
$\bar{\phi} : \M{S} \to \GL{2g,\C}$. By Theorem \ref{2g-rep}, 
$\bar{\phi}$ is trivial or conjugate to the symplectic representation $\rho_0$.

If $\bar{\phi}$ is trivial, then $\im{\phi}$ is abelian, and since $g \geq 3$, $\phi$ is trivial by
Theorem \ref{abelianization}, which contradicts to the assumption. Therefore,
we see $\bar{\phi}$ is conjugate to $\rho_0$.

By changing the basis of $\C^{2g+1}$ if necessary, we may assume $\bar{\phi}$ coincides with the matrix form
of $\rho_0$ with respect to the basis $\{ x_1, y_1, \ldots, x_g, y_g \}$ of $H_\C$.
Then we may consider $\vect{w} \in H_\C$, and the correspondence  $f \in \M{S} \mapsto \vect{w}$ defines
a crossed homomorphism 
$$c: \M{S} \to H_\C$$
with values in $H_\C$.
Namely, it holds 
$$c(f_1 f_2) = c(f_1) + \symp(f_1) c(f_2) \qquad \text{($f_1$, $f_2 \in \M{S}$).}$$
Therefore, we have $[\phi] = [\phi_c] = \sigma([c])$ in $X_0$.
\ppar

\subsubsection*{The case (B)}\par

For each $f \in \M{S}$,  $\phi(f)$ has the form 
$$\phi(f) = \left(\begin{array}{cc} F & 0 \\ \transpose{\vect{s}} & 1 \end{array}\right) \quad
     \text{with $F \in \GL{2g, \C}$ and $\vect{s} \in \C^{2g}$}.$$
\par

Let $\phi^* : \M{S} \to \GL{2g+1, \C}$ denote the dual representation of $\phi$ defined by 
$$ \phi^*(f) = \transpose{\phi(f)^{-1}} \quad \text{for each $f \in \M{S}$.}$$
Then $\phi^*$ is clearly a representation of Type (A), and hence we can apply the previous argument 
to obtain $[\phi^*] = \sigma([c])$ for some crossed homomorphism $c$. In other words, we have 
$\iota([\phi]) = \sigma([c])$ in $X_0$. This completes the proof of Theorem \ref{surjectivity-thm}.
\qed

 \section{Braid and commuting relations in matrices} \label{braid_commuting} \par

In this section we prove Theorems \ref{prelim_thm:2g+1} and \ref{for_extra_gen}. We first recall 
necessary results of Korkmaz, which was originally used for proving Theorem \ref{2g-rep}.

\subsection{Preliminary from \cite{sru}}\par

The next theorem follows from the irreducibility of the symplectic representation $\rho_0$ for $g=1$ 
together with Schur's lemma, or alternatively, can be verified by straightforward computation.
\par

\begin{theorem}[\cite{sru}, Lemma 2.2]\label{sru-lemma-2.2}
 Let $X$, $Y$ and $Z$ be $2 \times k$, $k \times 2$ and $2 \times 2$ matrices
 with entries in $\C$, respectively.
 \begin{itemize}
  \item[(1)] If \/ $UX = X$ and $\Uh X = X$, then $X=0$. 
  \item[(2)] If \/ $YU = Y$ and $Y \Uh = Y$, then $Y=0$.
  \item[(3)] If \/ $ZU = UZ$ and $Z \Uh = \Uh Z$, then $Z = \alpha I_2$ for some $\alpha \in \C$.
 \end{itemize}
\end{theorem}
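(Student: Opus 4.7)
The plan is to dispose of each of the three claims by a direct entry-level calculation, reducing everything to the nilpotent matrices
$$ U - I = \begin{pmatrix} 0 & 1 \\ 0 & 0 \end{pmatrix} \qquad \text{and} \qquad \Uh - I = \begin{pmatrix} 0 & 0 \\ -1 & 0 \end{pmatrix}. $$

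For (1), the two hypotheses $UX = X$ and $\Uh X = X$ are equivalent to $(U-I)X = 0$ and $(\Uh - I)X = 0$. Left multiplication of a $2 \times k$ matrix by $U - I$ shifts the second row of $X$ into the first slot and zeroes out the bottom, so the first equation forces the second row of $X$ to vanish; the second equation then forces the first row to vanish in the same manner, whence $X = 0$. Part (2) is the exact mirror image: the conditions become $Y(U - I) = 0$ and $Y(\Uh - I) = 0$, which respectively annihilate the first and second columns of $Y$, giving $Y = 0$. For (3), the idea is to write $Z = \begin{pmatrix} a & b \\ c & d \end{pmatrix}$ and simply expand the two commutators $ZU - UZ$ and $Z\Uh - \Uh Z$ by hand; the first vanishes precisely when $c = 0$ and $a = d$, and the second vanishes precisely when $b = 0$ and $a = d$, so $Z = aI_2$ for some $a \in \C$.

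There is no substantive obstacle here; the only thing to keep straight is that left multiplication by $U - I$ annihilates rows whereas right multiplication by $U - I$ annihilates columns, so that the arguments for (1) and (2) are parallel but not literally identical. Conceptually, all three statements reflect the fact that $U$ and $\Uh$ generate a subgroup of $\GL{2, \C}$ whose standard representation on $\C^2$ is irreducible and admits no nonzero fixed vector, so (3) would follow from Schur's lemma applied to $Z$ as a module endomorphism, while (1) and (2) would follow from the absence of a nonzero invariant vector in $\C^2$ and its dual. However, the direct two-by-two computation is shorter and fully self-contained, so that is the route I would take.
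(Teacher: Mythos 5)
Your computation is correct, and it is essentially the route the paper itself indicates: the paper gives no detailed proof of Theorem \ref{sru-lemma-2.2}, attributing it to Korkmaz and remarking that it follows either from irreducibility of $\rho_0$ for $g=1$ plus Schur's lemma or by straightforward computation, which is exactly the direct entry-level argument you carry out (and you also note the representation-theoretic alternative). Nothing further is needed.
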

This theorem can be generalized as follows by induction on $g$.

\begin{theorem}[\cite{sru},Lemma 2.3]\label{sru-lemma-2.3}
 Let $X$, $Y$, $Z$ be matrices with entries in $\C$ such that the multiplications given below 
 are all defined.
 \begin{itemize}
  \item[(1)] If \/ $A_i X = X$ and $B_i X = X$ \/ for $1 \leq i \leq g$, then \/ $X=0$.
  \item[(2)] If \/ $Y A_i = Y$ and $Y B_i = Y$ \/ for $1 \leq i \leq g$, then \/ $Y = 0$.
  \item[(3)] If \/ $Z A_i = A_i Z$ and $Z B_i = B_i Z$ \/ for $1 \leq i \leq g$, \/
	then \/ $Z = \diag{\alpha_1 I_2, \alpha_2 I_2, \ldots, \alpha_g I_2}$ \/ for some
	$\alpha_1$, $\alpha_2$, \ldots, $\alpha_g \in \C$.
 \end{itemize}
\end{theorem}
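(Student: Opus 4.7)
The plan is to prove all three parts uniformly by decomposing the matrices into $2\times 2$ blocks aligned with the block-diagonal structure of $A_i$ and $B_i$, reducing each statement to the $g=1$ case covered by Theorem \ref{sru-lemma-2.2}. This is the content of the induction on $g$ hinted at in the text; I find it cleanest to present as a direct block-wise argument.

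First I would handle part (1). Write $X$ as a column of $2\times k$ blocks $X = \transpose{(\transpose{X_1}, \transpose{X_2}, \ldots, \transpose{X_g})}$. Since $A_i = \diag{I_2,\ldots,U,\ldots,I_2}$ with $U$ in the $i$th diagonal slot, left-multiplication by $A_i$ replaces the $i$th block $X_i$ by $UX_i$ and fixes the rest; similarly for $B_i$ with $\Uh$. The hypothesis $A_iX=X$ and $B_iX=X$ thus forces $UX_i=X_i$ and $\Uh X_i=X_i$ for each $i$, whence $X_i=0$ by Theorem \ref{sru-lemma-2.2}(1). Hence $X=0$. Part (2) is dual: write $Y$ as a row of $k\times 2$ blocks, note that right-multiplication by $A_i$ (resp. $B_i$) right-multiplies only the $i$th block $Y_i$ by $U$ (resp. $\Uh$), and apply Theorem \ref{sru-lemma-2.2}(2) block by block.

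For part (3), decompose $Z$ into a $g\times g$ array of $2\times 2$ blocks $Z=(Z_{ij})$. Since $A_j$ is block-diagonal with $U$ only in the $j$th slot, a direct block computation yields, for $i\neq j$, that $(ZA_j)_{ij}=Z_{ij}U$ while $(A_jZ)_{ij}=Z_{ij}$; combining with the analogous identity from $ZB_j=B_jZ$, I get $Z_{ij}U=Z_{ij}$ and $Z_{ij}\Uh=Z_{ij}$, so $Z_{ij}=0$ by Theorem \ref{sru-lemma-2.2}(2). For the diagonal blocks, the same equations restricted to position $(i,i)$ give $Z_{ii}U=UZ_{ii}$ and $Z_{ii}\Uh=\Uh Z_{ii}$, whence $Z_{ii}=\alpha_iI_2$ for some $\alpha_i\in\C$ by Theorem \ref{sru-lemma-2.2}(3). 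Assembling the blocks yields $Z=\diag{\alpha_1I_2,\ldots,\alpha_gI_2}$.

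I do not anticipate any genuine obstacle; the only thing requiring care is the bookkeeping of indices, in particular noting that in part (3) one may use either the commutation with $A_j$ (producing a right-action relation on $Z_{ij}$) or with $A_i$ (producing a left-action relation), and that exactly one of these forces $Z_{ij}$ to be annihilated by $U-I$ and $\Uh-I$ from the appropriate side when $i\neq j$. The hypothesis that ``all multiplications are defined'' is used implicitly to ensure the block sizes match and the decompositions above make sense.
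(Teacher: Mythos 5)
Your block-wise reduction to Theorem \ref{sru-lemma-2.2} is correct, and it is essentially the argument the paper intends (the paper only remarks that the statement follows from Theorem \ref{sru-lemma-2.2} by induction on $g$, which is just your block decomposition phrased inductively). The diagonal and off-diagonal block computations are all accurate, so nothing is missing.
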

\par

We remark that Theorem \ref{sru-lemma-2.3} does not assume any of $X$, $Y$ and $Z$ represents a
$\M{S}$-homomorphism, unlike Theorem \ref{sru-lemma-2.2}.
In fact, the assumption of Theorem \ref{sru-lemma-2.2} implies, for instance, $Z$ represents a 
$\M{S_{1,0}^0}$-endomorphism of $H_\C$ because $U$ and $\Uh$ are, respectively, 
the images of the Dehn twists $t_{a_1}$ and $t_{b_1}$, which generate $\M{S_{1,0}^0}$, 
under the symplectic representation with respect to the basis $\{x_1, y_1\}$ given in Definition
\ref{symp-basis}. Therefore the consequence of the theorem follows from Schur's lemma. 
On the other hand, the assumption of Theorem \ref{sru-lemma-2.3}, for $g \geq 2$, 
does not imply $Z$ represents 
a $\M{S_{g,0}^0}$-endomorphism of $H_\C$ since otherwise the consequence of the theorem could 
be strengthened to $Z = aI$ rather than a block diagonal matrix, which is not necessarily true. 
The difference between the two theorems is due to the fact that the Dehn twists 
$t_{a_1}$, $t_{a_2}$, \ldots, $t_{a_g}$ and $t_{b_1}$, $t_{b_2}$, \ldots, $t_{b_g}$ are not sufficient 
to generate $\M{S_{g,0}^0}$ if $g \geq 2$.
\ppar

\subsection{A key lemma}\par

A key step to prove Theorem \ref{prelim_thm:2g+1} is the following, which 
is to characterize the matrix satisfying the conditions for $\Xt_1$ in Theorem \ref{prelim_thm:2g+1}.
\par
\begin{lemma} \label{key_lemma}
 Let $g \geq 2$, $m \geq 1$, and $\Xt \in \GL{2g+m, \C}$. Let 
 $$\At_i = \begin{pmatrix} A_i & 0 \\ 0 & I_m \end{pmatrix} \quad\text{and}\quad
    \Bt_i = \begin{pmatrix} B_i & 0 \\ 0 & I_m \end{pmatrix} \quad\text{for $1 \leq i \leq g$.}$$
 Suppose $\Xt$  satisfies the following conditions (i)-(iv).
\begin{enumerate}
   \item[(i)] $\Xt$ has a unique eigenvalue $1$.
   \item[(ii)] $\Xt \At_i = \At_i \Xt$ \/ for \/ $1 \leq i \leq g$.
   \item[(iii)] If $g \geq 3$, then $\Xt \Bt_j = \Bt_j \Xt$ \/ for \/ $3 \leq j \leq g$.
   \item[(iv)] $\Xt \Bt_j \Xt = \Bt_j \Xt \Bt_j$ \/ for $j=1$, $2$.
  \end{enumerate}
Then, there exists a nonzero complex number $p$ such that for 
 $$P = \diag{I_2, pI_2, I_{2g-4}} \quad\text{and } 
 \quad \Pt = \begin{pmatrix} P & 0 \\ 0 & I_m \end{pmatrix},$$ 
 it holds
 \begin{equation*}
  \Pt^{-1}\Xt\Pt = \left(\begin{array}{cc} C_1 & W_1 \\ S_1 & T \end{array}\right)
 \end{equation*}
 where $C_1$ and $T$ are, respectively, $2g \times 2g$ and $m \times m$ matrices, and 
 $$
 C_1  = \begin{pmatrix} L & 0 \\ 0 & I_{2g-4} \end{pmatrix} \, 
       \text{with $L$ the $4 \times 4$ matrix given in Definition \ref{L_and_Ck},}$$
 $$
 W_1 = \transpose{\begin{pmatrix} \vect{w} & \vect{0} & -\vect{w} & \vect{0} & 
		  \vect{0} & \cdots & \vect{0} \end{pmatrix}}, \quad
  S_1 = \begin{pmatrix} \vect{0} & \vect{s} & \vect{0} & -\vect{s} & 
	\vect{0} & \cdots & \vect{0} \end{pmatrix}$$
 with 
 $\vect{w}$, $\vect{s}$, $\vect{0} \in \C^m$; $\transpose{\vect{w}} \vect{s} = 0$; 
 $\transpose{\vect{w}}T = \transpose{\vect{w}}$, $T \vect{s} = \vect{s}$; 
and $T^2 - T = \vect{s} \transpose{\vect{w}}$.
\end{lemma}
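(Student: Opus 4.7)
The plan is to exploit the decomposition $\C^{2g+m} = \bigoplus_{i=1}^{g} \C^2 \oplus \C^m$ and write $\Xt = (X_{rs})$ with block indices $r, s \in \{1, \ldots, g, g+1\}$, where $X_{rs}$ is $2 \times 2$ for $r, s \leq g$, $X_{r, g+1}$ is $2 \times m$, $X_{g+1, s}$ is $m \times 2$, and $X_{g+1,g+1} = T$ is $m \times m$. The argument then proceeds in three stages: a \emph{block vanishing} stage from the commutation hypotheses, a \emph{scalar identification} stage from the two braid relations, and a \emph{normalization} stage choosing the diagonal conjugator $\Pt$.

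For the vanishing stage, fix $i \in \{3, \ldots, g\}$. Hypotheses (ii) and (iii) give $\Xt \At_i = \At_i \Xt$ and $\Xt \Bt_i = \Bt_i \Xt$, which expanded block-by-block force $U X_{is} = X_{is} = \Uh X_{is}$ and $X_{ri} U = X_{ri} = X_{ri} \Uh$ for $r, s \neq i$, together with $X_{ii}$ commuting with both $U$ and $\Uh$. Theorem \ref{sru-lemma-2.2} then yields $X_{is} = 0$ for $s \neq i$, $X_{ri} = 0$ for $r \neq i$, and $X_{ii} = \alpha_i I_2$; hypothesis (i) pins $\alpha_i = 1$. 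Only the commutations with $\At_1, \At_2$ apply to indices $1, 2$; the corresponding block equations together with the eigenvalue condition yield
\begin{equation*}
 X_{11} = \begin{pmatrix} 1 & \beta_1 \\ 0 & 1 \end{pmatrix}, \quad X_{22} = \begin{pmatrix} 1 & \beta_2 \\ 0 & 1 \end{pmatrix}, \quad X_{12} = \begin{pmatrix} 0 & b_{12} \\ 0 & 0 \end{pmatrix}, \quad X_{21} = \begin{pmatrix} 0 & b_{21} \\ 0 & 0 \end{pmatrix},
\end{equation*}
while $X_{1,g+1}$ and $X_{2,g+1}$ have vanishing second row (call their first rows $\transpose{\vect{w_1}}$, $\transpose{\vect{w_2}}$) and $X_{g+1,1}$, $X_{g+1,2}$ have vanishing first column (call their second columns $\vect{v_1}$, $\vect{v_2}$).

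Next I would expand the two braid identities $\Xt \Bt_j \Xt = \Bt_j \Xt \Bt_j$ for $j = 1, 2$ block-by-block; only the active $(4+m) \times (4+m)$ submatrix on indices $\{1, 2, g+1\}$ contributes. The $(1,1)$-block of the $\Bt_1$-relation yields $\beta_1 = 1$ and $\transpose{\vect{w_1}} \vect{v_1} = 0$; symmetrically the $(2,2)$-block of the $\Bt_2$-relation yields $\beta_2 = 1$ and $\transpose{\vect{w_2}} \vect{v_2} = 0$. The $(2,2)$-block of the $\Bt_1$-relation forces $b_{12} b_{21} = 1$, while the $(1,2)$- and $(2,1)$-blocks give $\transpose{\vect{w_1}} \vect{v_2} = \transpose{\vect{w_2}} \vect{v_1} = 0$. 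The $(1,g+1)$- and $(g+1,1)$-blocks of the $\Bt_1$-relation yield $\transpose{\vect{w_1}} T = \transpose{\vect{w_1}}$ and $T \vect{v_1} = \vect{v_1}$, while the analogous blocks of the $\Bt_2$-relation force the key proportionalities $\vect{w_2} = b_{12}^{-1} \vect{w_1}$ and $\vect{v_2} = b_{21}^{-1} \vect{v_1}$. Finally, the $(g+1, g+1)$-block of the $\Bt_1$-relation delivers $T^2 - T = \vect{v_1} \transpose{\vect{w_1}}$.

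To conclude, set $p := -b_{12}^{-1}$ (which equals $-b_{21}$ by $b_{12} b_{21} = 1$) and let $\Pt := \diag{I_2, pI_2, I_{2g-4}, I_m}$. A direct check shows $\Pt$ commutes with every $\At_i$ and $\Bt_i$ and fixes every diagonal block $X_{ii}$, while conjugation scales $b_{12}, b_{21} \mapsto -1$, $\vect{w_2} \mapsto -\vect{w_1}$, and $\vect{v_2} \mapsto -\vect{v_1}$. Setting $\vect{w} := \vect{w_1}$ and $\vect{s} := \vect{v_1}$ then realizes $\Pt^{-1} \Xt \Pt$ in exactly the stated form, with all the orthogonality and $T$-constraints already established. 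The main obstacle is purely bookkeeping: the braid relations couple many blocks simultaneously, and one must carefully disentangle the $m$-dimensional contributions from the $2 \times 2$ ones. Once the first stage reduces the non-trivial support of $\Xt$ to the nine active blocks, however, the braid computations present no conceptual difficulty.
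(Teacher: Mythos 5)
Your proposal is correct and takes essentially the same route as the paper's proof: you cut down the support of $\Xt$ using the commutations with the $\At_i$ and $\Bt_j$ (applying Theorem \ref{sru-lemma-2.2} blockwise where the paper invokes Theorem \ref{sru-lemma-2.3} on the grouped blocks $X$, $W$, $S$), pin the diagonal entries to $1$ by condition (i), and then extract from the two braid relations exactly the identities the paper records in \eqref{c_11}--\eqref{c_16} and \eqref{c_8_prime}--\eqref{c_10_prime}, namely $\beta_1=\beta_2=1$, $b_{12}b_{21}=1$, the vanishing inner products, the $T$-relations, and the proportionalities $\vect{w_2}=b_{12}^{-1}\vect{w_1}$, $\vect{v_2}=b_{21}^{-1}\vect{v_1}$. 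Your final conjugation by $\Pt$ with $p=-b_{12}^{-1}$ is precisely the paper's choice $p=-1/\alpha$, so the two arguments coincide up to bookkeeping.
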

\ppar

To prove the lemma, we first observe:
\begin{lemma} \label{Lem_commute_with_U}
 Let $Y = \begin{pmatrix}a & b\\ c & d \end{pmatrix}$ with $a$, $b$, $d$, $d \in \C$.
 \begin{enumerate}
  \item[(1)] If $YU = UY$, then $a=d$ and $c = 0$ so that $Y = \begin{pmatrix}a & b \\ 0 & a \end{pmatrix}$.
  \item[(2)] If $YU = UY = Y$, then $a=d=c=0$ so that $Y = \begin{pmatrix} 0 & b \\ 0 & 0 \end{pmatrix}$.
 \end{enumerate}
\end{lemma}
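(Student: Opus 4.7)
The plan is to settle both parts by direct $2\times 2$ matrix multiplication and a comparison of entries; this is routine linear algebra, so I do not anticipate any real obstacle beyond bookkeeping.

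For part (1), I would simply compute the two products
$$YU = \begin{pmatrix} a & a+b \\ c & c+d \end{pmatrix}, \qquad UY = \begin{pmatrix} a+c & b+d \\ c & d \end{pmatrix}.$$
Setting $YU = UY$, the $(1,1)$-entries give $c = 0$, the $(1,2)$-entries (equivalently, the $(2,2)$-entries) give $a = d$, and the $(2,1)$-entries then match automatically. This forces $Y = \begin{pmatrix} a & b \\ 0 & a \end{pmatrix}$, which is the claimed form.

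For part (2), I would impose the additional relation $UY = Y$ on top of what has just been established. Reading off the first row of the expression for $UY$ above against the corresponding row of $Y$ yields $c = 0$ (already known) and $d = 0$. Combined with the identity $a = d$ from part (1), this forces $a = d = 0$, and hence $Y = \begin{pmatrix} 0 & b \\ 0 & 0 \end{pmatrix}$, as required. The whole argument is essentially a two-line calculation, serving as an elementary base case that will be invoked repeatedly inside the proof of Lemma \ref{key_lemma} when analyzing $2 \times 2$ blocks of $\Xt$ against the distinguished blocks $U$ coming from the $\At_i$ and $\Bt_i$.
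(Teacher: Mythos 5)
Your proposal is correct and is the same direct entrywise computation the paper intends (its proof is simply ``Straightforward''). One tiny slip: comparing the $(2,2)$-entries gives $c+d=d$, i.e.\ $c=0$ again, not $a=d$ as your parenthetical claims; the relation $a=d$ comes only from the $(1,2)$-entries, so the argument stands as written once that aside is dropped.
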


\begin{proof}
 Straightforward.
\end{proof}
\ppar

\subsubsection*{Proof of Lemma \ref{key_lemma}}
\par

We write $\Xt = \begin{pmatrix} X & W \\ S & T \end{pmatrix}$
where $X$ and $T$ are, respectively, $2g \times 2g$ and $m \times m$ matrices.
By a straightforward computation, we see the condition (ii) implies
  \begin{align}
   & X A_i  = A_i X,  \label{c_1} \\
   & W = A_i W,  \label{c_2} \\
   & S A_i  = S \label{c_3} 
  \end{align}
for $1 \leq i \leq g$.
\par

In case $g \geq 3$, the condition (iii) similarly implies
  \begin{align}
   & X B_j  = B_j X, \label{c_4} \\
   & B_j W  = W,  \label{c_5} \\
   & S B_j  = S \label{c_6} 
  \end{align}
for $3 \leq j \leq g$.
\par

The condition (iv) implies
  \begin{align}
   & X B_j X + WS = B_j X B_j, \label{c_7} \\
   & X B_j W + WT = B_j W,  \label{c_8} \\
   & S B_j X + TS = S B_j, \label{c_9} \\
   & S B_j W + T^2 = T \label{c_10}
  \end{align}
for $j=1$, $2$.
\ppar

In view of \eqref{c_3} and \eqref{c_6}, we have 
\begin{itemize}
 \item $S(A_i - I_{2g}) = 0$ for $1 \leq i \leq g$,
 \item $S(B_j - I_{2g}) = 0$ for $3 \leq j \leq g$ if $g \geq 3$.
\end{itemize}
Therefore, we can easily see that all entries of $S$ are zero except in the second and the fourth columns.
Thus, we may write
\begin{equation} \label{form_S}
     S = \begin{pmatrix} \vect{0} & \vect{s_2} & \vect{0} & \vect{s_4} & \vect{0} 
	 & \cdots & \vect{0} \end{pmatrix} \quad 
        \text{where $\vect{s_2}$, $\vect{s_4}$, $\vect{0} \in \C^m$.}
\end{equation}
Similarly, in view of \eqref{c_2} and \eqref{c_5}, we can see all the entries of $W$ are zero except in the 
first and the third rows. We may thus write
\begin{equation} \label{form_W}
 W = \transpose{\begin{pmatrix}	\vect{w_1} & \vect{0} & \vect{w_3} & \vect{0} & 
		\vect{0} & \cdots & \vect{0} \end{pmatrix}} \quad
       \text{ where $\vect{w_1}$, $\vect{w_3}$, $\vect{0} \in \C^m$.}
\end{equation}

We then have
\begin{align}
 WS & = \left(\begin{array}{c|c}
     \begin{matrix}
      0 & \transpose{\vect{w_1}} \vect{s_2} & 0 & \transpose{\vect{w_1}} \vect{s_4} \\
      0 &  0 & 0  &  0 \\
      0 & \transpose{\vect{w_3}} \vect{s_2} & 0 & \transpose{\vect{w_3}} \vect{s_4} \\
      0 &  0 & 0  &  0 
     \end{matrix} & 0  \\
	 \hline 0 & 0
	\end{array}\right) \label{WS_formula}
\end{align}
where the upper left block is a $4 \times 4$ matrix.
\ppar

Next, we consider the form of $X$. Suppose $g \geq 3$ for the moment,  and we write 
$$X = \begin{pmatrix} X_0 & W_0\\ S_0 & T_0 \end{pmatrix}$$
where $X_0$ and $T_0$ are, respectively, $4 \times 4$ and $(2g-4) \times (2g-4)$ matrices. 
For $3 \leq i \leq g$, we set $(2g-4) \times (2g-4)$ matrices $\Ab_i$ and $\Bb_i$ by
$$ A_i =\begin{pmatrix} I_4 & 0 \\ 0 & \Ab_i \end{pmatrix}, \qquad 
	     B_i = \begin{pmatrix} I_4 & 0 \\ 0 & \Bb_i \end{pmatrix}.$$
Then the equalities \eqref{c_1} for $i \geq 3$ and \eqref{c_4} imply
\begin{itemize}
 \item $W_0 \Ab_i = W_0 = W_0 \Bb_i$,
 \item $S_0 = \Ab_i S_0 = \Bb_i S_0$,
 \item $T_0 \Ab_i = \Ab_i T_0$ and $T_0 \Bb_i = \Bb_i T_0$
\end{itemize}
for $3 \leq i \leq g$. We can then apply Theorem \ref{sru-lemma-2.3} to obtain
 $W_0 = 0$, $S_0 = 0$, and $T_0 = \diag{\alpha_1 I_2, \alpha_2 I_2, \ldots, \alpha_{g-2} I_2}$
for some $\alpha_1$, $\alpha_2$, \ldots, $\alpha_{g-2} \in \C$.
In view of the form of $S$ \eqref{form_S}, we see each $\alpha_i$ is an eigenvalue of $\Xt$, and 
therefore, $\alpha_i = 1$ by the condition (i). In case $g = 2$, we may simply set $X_0 = X$. In short, 
we conclude for $g \geq 2$ that
\begin{equation}
 X = \begin{pmatrix} X_0 & 0 \\ 0 & I_{2g-4} \end{pmatrix} \label{form_X}
\end{equation}
where $X_0$ is a $4 \times 4$ matrix.
\par

We next write $X_0 = \begin{pmatrix} X_1 & X_2 \\ X_3 & X_4 \end{pmatrix}$ where $X_1$ and 
$X_4$ are $2 \times 2$ matrices. Then the equality 
\eqref{c_1} for $i=1$, $2$ implies
\begin{itemize}
 \item $X_1 U = U X_1$,
 \item $X_2 U = X_2 = U X_2$,
 \item $X_3 U = X_3 = U X_3$,
 \item $X_4 U = U X_4$.
\end{itemize}
We can therefore apply Lemma \ref{Lem_commute_with_U} to obtain
\begin{equation*} \label{form_X_0}
 X_0 = \begin{pmatrix}
	a & b & 0  & \alpha \\
	0 & a & 0 & 0 \\
	0 & \beta & c  & d \\
	0 & 0 & 0  & c
       \end{pmatrix}
\quad \text{for some $\alpha$, $\beta$, $a$, $b$, $c$, $d \in \C$.}
\end{equation*}
In particular, in view of the first and the third columns of $X_0$ together with \eqref{form_X}
and \eqref{form_S}, we see $a$ and $c$ are both eigenvalues of $\Xt$, and therefore, we have 
$a = c = 1$ by the condition (i). We may thus write 
\begin{equation} \label{form_X0}
	X_0 = \begin{pmatrix}
	       I_2 + b (U - I_2) & \alpha (U - I_2) \\
	       \beta (U - I_2) & I_2 + d (U - I_2)
	      \end{pmatrix}.
\end{equation}
\par

Next, we consider the equality \eqref{c_7}. We set the $4 \times 4$ matrix $B_j^\prime$ as
$$B_j = \begin{pmatrix} B_j^\prime & 0 \\ 0 & I_{2g-4} \end{pmatrix} \quad\text{for $j=1$, $2$.}$$
In view of \eqref{form_X} and \eqref{WS_formula}, the equality \eqref{c_7} implies
\begin{equation} \label{c_7_modified}
 X_0 B_j^\prime X_0 + (WS)_{1,1} = B_j^\prime X_0 B_j^\prime \quad\text{for $j=1$, $2$}
\end{equation}
where $(WS)_{1,1}$ denotes the upper left block of $WS$ in \eqref{WS_formula}, and can be written as
\begin{equation*}
 (WS)_{1,1} = \begin{pmatrix}
	       \transpose{\vect{w_1}} \vect{s_2} (U - I_2) & \transpose{\vect{w_1}} \vect{s_4} (U - I_2) \\
	       \transpose{\vect{w_3}} \vect{s_2} (U - I_2) & \transpose{\vect{w_3}} \vect{s_4} (U - I_2)
	      \end{pmatrix}.
\end{equation*}
Then, together with \eqref{form_X0} and obvious equalities $( U - I_2 )^2 = (\Uh -I_2)^2 = 0$,
$(U-I_2) \Uh (U-I_2) = (-1)(U-I_2)$, and $(\Uh - I_2) U (\Uh - I_2) = (-1)( \Uh - I_2)$, \eqref{c_7_modified} 
for $j=1$ implies the following:
\begin{align}
   \begin{split} \label{c_11}
    & \Uh  +  b \Uh (U-I_2) + b (U-I_2) \Uh - b^2 (U-I_2) + \transpose{\vect{w_1}} \vect{s_2} (U-I_2) \\
     & \qquad  = \Uh^2 + b \Uh (U-I_2) \Uh,
   \end{split} \\
 &  \alpha \Uh (U-I_2)+ (1-b) \alpha (U-I_2) + \transpose{\vect{w_1}} \vect{s_4} (U-I_2)
    = \alpha \Uh (U-I_2), \label{c_12} \\
 & \beta (U-I_2) \Uh + \beta (1-b) (U-I_2) + \transpose{\vect{w_3}} \vect{s_2} (U-I_2) 
    = \beta (U-I_2) \Uh, \label{c_13} \\
 &  I_2 + (2d - \alpha \beta) (U-I_2) + \transpose{\vect{w_3}}\vect{s_4} (U-I_2) 
    = I_2 + d (U-I_2). \label{c_14}
\end{align}
\par

Similarly, the lower right block of \eqref{c_7_modified} for $j=2$ implies
\begin{align}
 \begin{split} \label{c_15}
   &  \Uh + d \Uh (U-I_2) + d (U-I_2) \Uh - d^2 (U-I_2) + \transpose{\vect{w_3}} \vect{s_4} (U-I_2) \\
   & \qquad = \Uh^2 + d \Uh (U-I_2) \Uh.
 \end{split}
\end{align}
\par

By straightforward computations of the $(2,1)$-entries of \eqref{c_11} and \eqref{c_15}, 
we obtain $b=1$ and $d=1$, respectively. Then, straightforward computations of the $(1,2)$-entries 
of \eqref{c_11}, \eqref{c_12}, \eqref{c_13}, and \eqref{c_15} imply in turn 
$\transpose{\vect{w_1}} \vect{s_2}$, $\transpose{\vect{w_1}} \vect{s_4}$, 
$\transpose{\vect{w_3}} \vect{s_2}$, and $\transpose{\vect{w_3}} \vect{s_4}$ are all zero.
This means simply 
\begin{equation} \label{c_16}
 WS = 0.
\end{equation}
Furthermore, in view of the $(1,2)$-entry of \eqref{c_14}, we obtain $\alpha \beta = 1$.
Therefore, we have 
$$  
   X= \left(\begin{array}{c|c}
    \begin{matrix}
     1 & 1 & 0 & \alpha \\
     0 & 1 & 0 & 0 \\
     0 & \frac{1}{\alpha} & 1 & 1 \\
     0 & 0 & 0 & 1
    \end{matrix} & 0 \\
\hline 0 & I_{2g-4}
      \end{array}\right) \quad\text{with $\alpha \neq 0$}$$
and 
$\Xt = \begin{pmatrix} X & W \\ S & T \end{pmatrix}$
where $W$ and $S$ are as \eqref{form_W} and  \eqref{form_S} with $WS = 0$.
\ppar

Now, we can see by tedious but straightforward computation that the equalities \eqref{c_8},
\eqref{c_9}, and \eqref{c_10} are, respectively, equivalent to 
\begin{align}
 & \vect{w_3}  = \frac{1}{\alpha} \vect{w_1} \quad\text{and}\quad \transpose{\vect{w_1}} T 
                  = \transpose{\vect{w_1}},\label{c_8_prime} \\
 & \vect{s_4}  = \alpha \vect{s_2} \quad\text{and}\quad T \vect{s_2} = \vect{s_2}, \label{c_9_prime} \\
 \intertext{and}
 & \vect{s_2} \transpose{\vect{w_1}} = T^2 - T = \vect{s_4} \transpose{\vect{w_3}}. \label{c_10_prime}
\end{align}
\ppar

Finally, let $p = -\frac{1}{\alpha}$, $P = \diag{I_2, p I_2, I_{2g-4}}$, and 
$\Pt = \begin{pmatrix} P & 0 \\ 0 & I_m \end{pmatrix}$. Also, let $\vect{s} = \vect{s_2}$ and 
$\vect{w} = \vect{w_1}$. Then a direct computation implies
$$\Pt^{-1} \Xt \Pt = \begin{pmatrix} C_1 & W_1 \\ S_1 & T \end{pmatrix},$$
where
\begin{align*}
 W_1 & 
      = P^{-1}W = \transpose{\begin{pmatrix} \vect{w} & \vect{0} & -\vect{w} & \vect{0} & 
			     \vect{0} & \cdots & \vect{0} \end{pmatrix}}, \\
 S_1 & = SP = \begin{pmatrix} \vect{0} & \vect{s} & \vect{0} & -\vect{s} & 
	      \vect{0} & \cdots & \vect{0} \end{pmatrix}.
\end{align*}
Furthermore, by \eqref{c_16} and \eqref{c_8_prime}-\eqref{c_10_prime}, we have
$$\transpose{\vect{w}} \vect{s} = 0, \quad \transpose{\vect{w}} T = \transpose{\vect{w}}, 
    T \vect{s} = \vect{s}, \quad\text{and}\quad T^2 - T = \vect{s} \vect{w}.$$
This completes the proof of Lemma \ref{key_lemma}. \qed
\ppar

\begin{remark} \label{weakness}
 In general, the conclusion of Lemma \ref{key_lemma} cannot be strengthened to 
 ``either $W_1 = 0$ or $S_1 = 0$.'' In fact, a counterexample is given for $m=2$ by 
  $$\Xt = \begin{pmatrix} C_1 & W_1 \\ S_1 & T\end{pmatrix} \, \text{ with } \,
	T = \begin{pmatrix} 2 & 1 \\ -1 & 0\end{pmatrix},$$ 
 and
 \begin{equation*}
  W_1  = \transpose{\begin{pmatrix} \vect{w} & \vect{0} & -\vect{w} & \vect{0} & 
		    \vect{0} & \cdots & \vect{0} \end{pmatrix}}, \quad
  S_1  = \begin{pmatrix} \vect{0} & \vect{s} & \vect{0} & -\vect{s} & 
	 \vect{0} & \cdots & \vect{0} \end{pmatrix}
 \end{equation*}
 where $\vect{w} = \begin{pmatrix} 1 \\ 1 \end{pmatrix}$ and 
	     $\vect{s} = \begin{pmatrix} 1 \\ -1 \end{pmatrix}$.
\end{remark}

\subsection{Proof of Theorem \ref{prelim_thm:2g+1}} \par

We first observe that the conclusion of Lemma \ref{key_lemma} can be strengthened if $m=1$.
Here, we denote
\begin{equation*}
 \At_i = \begin{pmatrix} A_i & 0 \\ 0 & 1 \end{pmatrix} \quad\text{and}\quad
 \Bt_i = \begin{pmatrix} B_i & 0 \\ 0 & 1 \end{pmatrix} \in \GL{2g+1, \C}
\end{equation*}
for $i=1$, $2$, \ldots, $g$.

\begin{theorem} \label{consequence:2g+1_1}
 Let $g \geq 2$. Suppose $\Xt \in \GL{2g+1, \C}$ satisfies the following conditions (i)--(iv).
\begin{enumerate}
   \item[(i)] $\Xt$ has a unique eigenvalue $1$.
   \item[(ii)] $\Xt \At_i = \At_i \Xt$ for $1 \leq i \leq g$.
   \item[(iii)] If $g \geq 3$, then $\Xt \Bt_j = \Bt_j \Xt$ for $3 \leq j \leq g$.
   \item[(iv)] $\Xt \Bt_j \Xt = \Bt_j \Xt \Bt_j$ for $j=1$, $2$.
  \end{enumerate}
 Then there exists a nonzero complex number $p$ such that for $P=\diag{I_2, p I_2, I_{2g-4}}$ 
 and $\Pt = \begin{pmatrix} P & 0 \\ 0 & 1 \end{pmatrix}$, it holds
 \begin{equation*}
  \Pt^{-1} \Xt \Pt = \begin{pmatrix} C_1 & \vect{w} \\ \transpose{\vect{s}} & 1 \end{pmatrix}
 \end{equation*}
 where $\vect{w}$, $\vect{s} \in \C^{2g}$ with either $\vect{w} = \vect{0}$ or $\vect{s} = \vect{0}$,
 and the entries of $\vect{w}$ and $\vect{s}$ are all zero except for the first through fourth rows.
\end{theorem}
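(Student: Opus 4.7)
The plan is to specialize Lemma \ref{key_lemma} to the situation $m=1$ and then use hypothesis (i) to force the bottom-right entry to equal $1$. Applying the lemma verbatim produces a conjugator $\Pt=\begin{pmatrix}P&0\\0&1\end{pmatrix}$ with $P=\diag{I_2,pI_2,I_{2g-4}}$ such that
$$\Pt^{-1}\Xt\Pt=\begin{pmatrix}C_1 & W_1\\ S_1 & t\end{pmatrix},$$
where now $t\in\C$ is a scalar, and $W_1$, $S_1$ have the sparse forms prescribed by the lemma, expressed in terms of scalars $w,s\in\C$.

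The auxiliary relations supplied by Lemma \ref{key_lemma} degenerate pleasantly when $m=1$: the orthogonality $\transpose{\vect w}\vect s=0$ collapses to the scalar identity $ws=0$, while the quadratic relation $T^2-T=\vect s\transpose{\vect w}$ collapses to $t^2-t=sw=0$. In particular at least one of $w$, $s$ vanishes, so either $W_1=0$ or $S_1=0$; this is exactly where the $m=1$ hypothesis saves us from the obstruction recorded in Remark \ref{weakness}.

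Whichever of $W_1$, $S_1$ is zero, the matrix $\Pt^{-1}\Xt\Pt$ is block triangular with diagonal blocks $C_1$ and $t$, so its spectrum is the spectrum of $C_1$ together with $\{t\}$. Since $C_1$ is upper triangular with all diagonal entries equal to $1$, its only eigenvalue is $1$, and hypothesis (i) forces every eigenvalue of $\Xt$ to be $1$ as well. Consequently $t=1$, and renaming $\vect w:=W_1\in\C^{2g}$ and $\transpose{\vect s}:=S_1\in\C^{2g}$ yields the asserted form; the vanishing of their entries outside the first four rows is already built into the definitions of $W_1$ and $S_1$ from the lemma.

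I expect no genuine obstacle. The entire argument is a direct specialization of Lemma \ref{key_lemma}, and the only real content is the short chain of implications $m=1\Rightarrow(ws=0$ and $t^2-t=0)\Rightarrow$ (block triangularity)$\Rightarrow t=1$ via hypothesis (i).
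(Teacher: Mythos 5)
Your proposal is correct and follows essentially the same route as the paper: apply Lemma \ref{key_lemma} with $m=1$, note that $\transpose{\vect{w}}\vect{s}=0$ collapses to $ws=0$ so one of the two off-diagonal vectors vanishes, and then pin down the corner entry as $1$ (the paper does this via the determinant of $\Pt^{-1}\Xt\Pt$, you via condition (i) and the spectrum of the resulting block-triangular matrix, which is equivalent). One cosmetic slip: $C_1$ is not upper triangular (the block $L$ has a subdiagonal entry), though it is unipotent since $(L-I_4)^2=0$; in any case your argument does not actually need the eigenvalues of $C_1$, only that $t$ lies in the spectrum of $\Xt$, which is $\{1\}$ by (i).
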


\begin{proof}
 By Lemma \ref{key_lemma} with $m=1$, there exists a nonzero complex number $p$ such that for $P$ and 
 $\Pt$ as  in the theorem, it holds
 \begin{equation*}
  \Pt^{-1} \Xt \Pt = \begin{pmatrix} C_1 & \vect{w} \\ \transpose{\vect{s}} & t \end{pmatrix}
 \end{equation*}
 where $\vect{w}$, $\vect{s} \in \C^{2g}$ and $t \in \C$ with the properties given in the lemma, 
 among which
 \begin{align*}
  \transpose{\vect{w}} & = \begin{pmatrix} w & 0 & -w &  0 & 0 & \cdots & 0 \end{pmatrix}, \\
  \transpose{\vect{s}} & = \begin{pmatrix} 0 & s &  0 & -s & 0 & \cdots & 0 \end{pmatrix}
 \end{align*}
 for some $w$, $s \in \C$ with $ws = 0$. This implies either $\vect{w} = \vect{0}$ or $\vect{s} = \vect{0}$.
 We then have $t=1$ by considering the determinant of $\Pt^{-1} \Xt \Pt$. This completes the proof.
\end{proof}
\ppar

 We note in case $g=2$, Theorem \ref{consequence:2g+1_1} implies Theorem \ref{prelim_thm:2g+1} 
 by putting $\Xt = \Xt_1$. 
 Hereafter in this subsection, we assume $g \geq 3$, and the index $i$ is considered modulo $g$.
We consider a certain periodic homeomorphism of the closed surface of $\Sb$. Recall that 
$\Sb$ is obtained from the surface $S$ by gluing a $2$-disk to each boundary component and 
forgetting all the punctures. We configure $\Sb$ as in Figure \ref{fig3} as well as simple closed curves 
$a_i$, $b_i$, $c_k$ for $1 \leq i \leq g$ and $1 \leq k \leq g-1$. We denote by $r \in \M{\Sb}$
the mapping class represented by the counter-clockwise $1/g$-rotation around the center. 
 \begin{figure}
  \centering
  \includegraphics[scale=1.0]{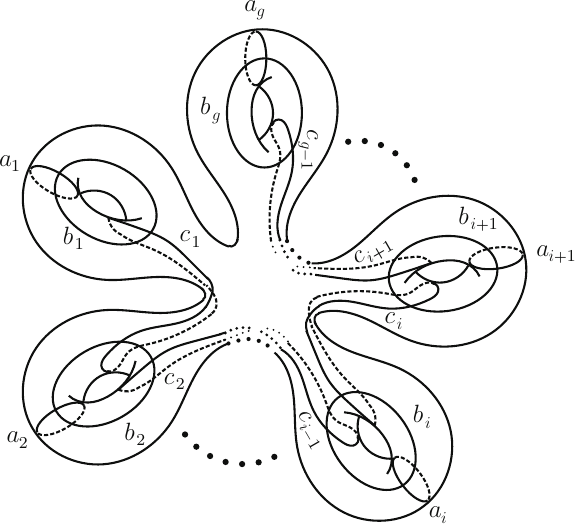}
  \caption{Periodic homeomorphism $r$}\label{fig3}
 \end{figure}
We see
 $$r(a_i) = a_{i+1}, \, r(b_i) = b_{i+1}, \, \text{ and } \, r(c_k) = c_{k+1}$$
for $1 \leq i \leq g$ and $1 \leq k \leq g-2$.
We now denote the symplectic representation of $\M{\Sb}$ by $\bar{\rho}_0: \M{\Sb} \to \GL{2g, \C}$.
Let $G = \bar{\rho}_0(r)$ with respect to the basis $\{x_i, y_i\}$ as in Definition \ref{symp-basis} with $S$
replaced by $\Sb$ where the curves $x_i$ and $y_i$ are also reconfigured in an obvious manner.
One can easily see $G = \begin{pmatrix} 0 & I_2 \\ I_{2g-2} & 0 \end{pmatrix}$.
Also, let $\Gt = \begin{pmatrix} G & 0 \\ 0 & 1\end{pmatrix}$. We note
\begin{equation*}
 A_i = \bar{\rho}_0(t_{a_i}), \, B_i = \bar{\rho}_0(t_{b_i}), \, \text{ and } \, C_k = \bar{\rho}_0(t_{c_k})
\end{equation*}
for $1 \leq i \leq g$ and for $1 \leq k \leq g-1$
with respect to the same basis. Therefore, by making use of Theorem \ref{dtwist-rel} (1), 
we see for instance 
$$G^{-1} A_i G = \bar{\rho}_0(r^{-1} t_{a_i} r) = \bar{\rho}_0(t_{r^{-1}(a_i)}) 
    = \bar{\rho}_0(t_{a_{i-1}}) = A_{i-1} \quad\text{for $1 \leq i \leq g$.}$$
Combining similar computations, we obtain for $1 \leq i \leq g$ \/ and $2 \leq k \leq g-1$,
\begin{align*}
 &  G^{-1} A_i G = A_{i-1}, \,  G^{-1} B_i G = B_{i-1}, \, \text{ and }\, G^{-1} C_k G = C_{k-1} \\
 \intertext{as well as}
 &  \Gt^{-1} \At_i \Gt = \At_{i-1}, \, \Gt^{-1} \Bt_i \Gt = \Bt_{i-1}. &
\end{align*}
\ppar

We can now begin the proof of Theorem \ref{prelim_thm:2g+1} for $g \geq 3$.
Suppose the matrices $\Xt_1$, $\Xt_2$, \ldots, $\Xt_{g-1} \in \GL{2g+1, \C}$ satisfy the conditions
(i)-(iv) in the theorem. 
\par

We first take $\Xt = \Xt_1$ and apply Theorem \ref{consequence:2g+1_1} to obtain $p_1 \in \C^\times$,
$P_1 = \diag{I_2, p_1 I_2, I_{2g-4}}$, and $\Pt_1 = \begin{pmatrix} P & 0 \\ 0 & 1 \end{pmatrix}$ so that
$$\Pt_1^{-1} \Xt_1 \Pt_1 = \begin{pmatrix} C_1 & \vect{w_1} \\ \transpose{\vect{s_1}} & 1 \end{pmatrix}$$
where $\vect{w_1}$, $\vect{s_1} \in \C^{2g}$ with either $\vect{w_1} = \vect{0}$ or $\vect{s_1} = \vect{0}$, 
and the entries of $\vect{w_1}$ and $\vect{s_1}$ are all zero except for the first through fourth rows.
\par

Suppose next that nonzero complex numbers $p_i$ for $1 \leq i \leq k$ with $1 \leq k < g-1$ are 
provided so that for 
\begin{align*}
 P_i & = \diag{I_{2i}, p_i I_2, I_{2g-2i-2}}, \\
 Q_k & = P_1 P_2 \cdots P_k, \, \text{and } \, \Qt_k = \begin{pmatrix} Q_k & 0  \\ 0 & 1 \end{pmatrix},
\end{align*}
$$\Qt_k^{-1} \Xt_i \Qt_k = \begin{pmatrix} C_k & \vect{w_i} \\ \transpose{\vect{s_i}} & 1 \end{pmatrix}
  \quad\text{for $1 \leq i \leq k$}$$
where $\vect{w_i}$, $\vect{s_i} \in \C^{2g}$ with either $\vect{w_i} = \vect{0}$ or $\vect{s_i} = \vect{0}$, 
and the entries of $\vect{w_i}$ and $\vect{s_i}$ are all zero except for the $(2i-1)$st through 
$(2i+2)$nd rows. Under this assumption, we seek $p_{k+1} \in \C^\times$ which produces appropriate 
$P_{k+1}$, $Q_{k+1}$, and $\Qt_{k+1}$. 

We first observe that all of $A_i$ and $B_i$ are block diagonal matrices with each diagonal block a 
$2 \times 2$ matrix. Therefore, all of $A_i$ and $B_i$ commute with $P_1$, $P_2$, \ldots, $P_k$.
We hence have
$$ \Qt_k^{-1} \At_i \Qt_k = \At_i \, \text{ and } \, \Qt_k^{-1} \Bt_i \Qt_k = \Bt_i 
        \quad\text{for $1 \leq i \leq g$.}$$
Now, let $\Xt_{k+1}^\prime = \Qt_k^{-1} \Xt_{k+1} \Qt_k$. Then by taking the conjugation of the conditions 
(ii)-(iv) by $\Qt_k$, we see that $\Xt_{k+1}^\prime$ satisfies the same conditions (i)-(iv) for $\Xt_{k+1}$.
We set further 
$$ \Xt = \left( \Gt^k \right)^{-1} \Xt_{k+1}^\prime \Gt^k.$$
Since $\left( \Gt^k \right)^{-1} \Bt_{k+1} \Gt^k = \Bt_1$ and 
$\left( \Gt^k \right)^{-1} \Bt_{k+2} \Gt^k = \Bt_2$ in particular, the matrix $\Xt$ satisfies the assumption 
of Theorem \ref{consequence:2g+1_1}. Therefore, we can apply Theorem \ref{consequence:2g+1_1} to 
obtain $p_{k+1} \in \C^\times$ so that for 
 $P  = \diag{I_2, p_{k+1} I_2, I_{2g-4}}$ and $\Pt = \begin{pmatrix} P & 0 \\ 0 & 1 \end{pmatrix}$,
$$\Pt^{-1} \left( \Gt^k \right)^{-1} \Xt_{k+1}^\prime \Gt^k \Pt 
     = \begin{pmatrix} C_1 & \vect{w_{k+1}^\prime} \\ \transpose{\vect{s_{k+1}^\prime}} & 1 \end{pmatrix}$$
where $\vect{w_{k+1}^\prime}$, $\vect{s_{k+1}^\prime} \in \C^{2g}$ with either 
$\vect{w_{k+1}^\prime} = \vect{0}$, or $\vect{s_{k+1}^\prime} = \vect{0}$, and the entries of 
$\vect{w_{k+1}^\prime}$ and $\vect{s_{k+1}^\prime}$ are all zero except for the first through fourth rows.
On the other hand, it is easy to see 
$$ G^k P (G^k)^{-1} = \diag{I_{2k+2}, p_{k+1} I_2, I_{2g-2k-4}},$$
which we denote by $P_{k+1}$. We further set
\begin{equation*}
 \Pt_{k+1}  = \begin{pmatrix} P_{k+1} & 0 \\ 0 & 1 \end{pmatrix}, \, Q_{k+1} = P_1 P_2 \cdots P_{k+1}, \,
\text{and }\, \Qt_{k+1}  = \begin{pmatrix} Q_{k+1} & 0 \\ 0 & 1 \end{pmatrix}.
\end{equation*}
We then compute
\begin{align*}
 \Qt_{k+1}^{-1} \Xt_{k+1} \Qt_{k+1} & = \Pt_{k+1}^{-1} \Xt_{k+1}^\prime \Pt_{k+1} \\
       & = \Gt^k \left( \Pt^{-1} \left(\Gt^k \right)^{-1} \Xt_{k+1}^\prime \Gt^k \Pt \right)
           \left( \Gt^k \right)^{-1} \\
       & = \begin{pmatrix} G^k & 0 \\ 0 & 1\end{pmatrix} 
           \begin{pmatrix} C_1 & \vect{w_{k+1}^\prime} \\ \transpose{\vect{s_{k+1}^\prime}} & 1 \end{pmatrix}
           \begin{pmatrix} ( G^k )^{-1} & 0 \\ 0 & 1\end{pmatrix} \\
       & = \begin{pmatrix} G^k C_1 (G^k)^{-1} & G^k \vect{w_{k+1}^\prime} \\ 
               \transpose{\vect{s_{k+1}^\prime}} ( G^k )^{-1} & 1 \end{pmatrix} \\
       & = \begin{pmatrix} C_{k+1} & G^k \vect{w_{k+1}^\prime} \\ 
	    \transpose{\left( G^k \vect{s_{k+1}^\prime}\right)} & 1 \end{pmatrix}.
\end{align*}
Here, we used an obvious relation $\transpose{G} = G^{-1}$ in the last equality.
\par

Now, we set 
$$\vect{w_{k+1}} = G^k \vect{w_{k+1}^\prime} \, \text{ and }\, \vect{s_{k+1}} = G^k \vect{s_{k+1}^\prime}$$
so that we have
$$\Qt_{k+1}^{-1} \Xt_{k+1} \Qt_{k+1} 
  = \begin{pmatrix} C_{k+1} & \vect{w_{k+1}} \\ \transpose{\vect{s_{k+1}}} & 1 \end{pmatrix}$$
where $\vect{w_{k+1}}$, $\vect{s_{k+1}} \in \C^{2g}$ with either $\vect{w_{k+1}} = \vect{0}$ or 
$\vect{s_{k+1}} = \vect{0}$. It is easy to see that the entries of $\vect{w_{k+1}}$ and $\vect{s_{k+1}}$ 
are all zero except for the $(2k+1)$st through $(2k+4)$th rows.
\par

Finally, for $1 \leq i \leq k$, we note we may write 
$$C_i = \begin{pmatrix} \bar{C}_i & 0 \\ 0 & I_{2g-2k-2} \end{pmatrix} \, 
\text{ and }\, 
P_{k+1} = \begin{pmatrix} I_{2k+2} & 0 \\ 0 & \bar{P}_{k+1} \end{pmatrix}$$
for some $(2k+2) \times (2k+2)$ matrix $\bar{C}_i$ and $(2g-2k-2) \times (2g-2k-2)$ matrix
$\bar{P}_{k+1}$. We hence have
$$P_{k+1}^{-1} C_i P_{k+1} = C_i \, \text{ for $1 \leq i \leq k$.}$$
This implies for $1 \leq i \leq k$ that 
\begin{equation*}
 \Qt_{k+1}^{-1} \Xt_i \Qt_{k+1} = \Pt_{k+1}^{-1} ( \Qt_k^{-1} \Xt_i \Qt_k) \Pt_{k+1}
  = \begin{pmatrix} C_i & P_{k+1}^{-1} \vect{w_i} \\ \transpose{\vect{s_i}} P_{k+1} & 1 \end{pmatrix}.
\end{equation*}
For $i \leq k$, since the entries of $\vect{w_i}$ are all zero except for the first $2k+2$ rows, we can
easily see $P_{k+1}^{-1} \vect{w_i} = \vect{w_i}$. Similarly, we see for $i \leq k$, 
$$\transpose{\vect{s_i}} P_{k+1} = \transpose{(P_{k+1} \vect{s_i})} = \transpose{\vect{s_i}}.$$
\par

We can now conclude 
$$\Qt_{k+1}^{-1} \Xt_i \Qt_{k+1} 
  = \begin{pmatrix} C_i & \vect{w_i} \\ \transpose{\vect{s_i}} & 1 \end{pmatrix} \, 
  \text{ for $1 \leq i \leq k+1$}$$
where $\vect{w_i}$, $\vect{s_i} \in \C^{2g}$ with either $\vect{w_i} = \vect{0}$ or $\vect{s_i} = \vect{0}$,
and the entries of $\vect{w_i}$ and $\vect{s_i}$ are all zero except for the $(2i-1)$st through 
$(2i+2)$nd rows.
\ppar

Now, we apply this process repeatedly starting from $k=1$ to $k=g-2$. We then obtain 
nonzero complex numbers $p_1$, $p_2$, \ldots, $p_{g-1}$ so that for 
$P = Q_{g-1} = \diag{I_2, p_1 I_2, \ldots, p_{g-1} I_2}$ and 
$\Pt = \begin{pmatrix} P & 0 \\ 0 & 1 \end{pmatrix}$ have the desired property
$$\Pt^{-1} \Xt_k \Pt = \begin{pmatrix} C_k & \vect{w_k} \\ \transpose{\vect{s_k}} & 1 \end{pmatrix}\, 
  \text{ for $1 \leq k \leq g-1$}$$
where $\vect{w_k}$, $\vect{s_k} \in \C^{2g}$ with either $\vect{w_k} = \vect{0}$ or 
$\vect{s_k} = \vect{0}$. This completes the proof of Theorem \ref{prelim_thm:2g+1}. \qed
\ppar

\subsection{Proof of Theorem \ref{for_extra_gen}} \par

We first prove an analogue of Lemma \ref{key_lemma}.

\begin{lemma} \label{an_analogue}
 Let $g \geq 2$, $m \geq 1$, and $\Ft \in \GL{2g+m, \C}$. Let 
 $$\At_i = \begin{pmatrix} A_i & 0 \\ 0 & I_m \end{pmatrix} \quad\text{and}\quad
    \Bt_i = \begin{pmatrix} B_i & 0 \\ 0 & I_m \end{pmatrix} \quad\text{for $1 \leq i \leq g$.}$$
 Suppose $\Ft$  satisfies the following conditions (i)-(iv).
\begin{enumerate}
   \item[(i)] $\Ft$ has a unique eigenvalue $1$.
   \item[(ii)] $\Ft \At_i = \At_i \Ft$ \/ for \/ $1 \leq i \leq g$.
   \item[(iii)] $\Ft \Bt_j = \Bt_j \Ft$ \/ for \/ $2 \leq j \leq g$.
   \item[(iv)] $\Ft \Bt_1 \Ft = \Bt_1 \Ft \Bt_1$.
  \end{enumerate}
Then, it holds
 \begin{equation*}
  \Ft = \left(\begin{array}{cc} A_1 & W \\ S & T \end{array}\right)
 \end{equation*}
 where $T$ is an $m \times m$ matrix, and 
 $$
  W = \transpose{\begin{pmatrix} \vect{w} & \vect{0} & \vect{0} & \vect{0} & 
		 \vect{0} & \cdots & \vect{0} \end{pmatrix}}, \quad
  S = \begin{pmatrix} \vect{0} & \vect{s} & \vect{0} & \vect{0} & 
      \vect{0} & \cdots & \vect{0} \end{pmatrix}$$
 with $\vect{w}$, $\vect{s}$, $\vect{0} \in \C^m$; $\transpose{\vect{w}} \vect{s} = 0$; 
 $\transpose{\vect{w}}T = \transpose{\vect{w}}$, $T \vect{s} = \vect{s}$; 
and $T^2 - T = \vect{s} \transpose{\vect{w}}$.
\end{lemma}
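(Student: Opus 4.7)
My plan is to mirror the proof of Lemma~\ref{key_lemma}, exploiting the stronger hypothesis that $\Ft$ commutes with $\Bt_j$ for every $j \geq 2$, not only $j \geq 3$. This extra commutation will confine all non-trivial behavior of $\Ft$ to the first two coordinates of $\C^{2g}$, so the eventual braid calculation should live on a very small block, and no conjugation by a scaling matrix (as was needed in Lemma~\ref{key_lemma}) should be required.

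I would begin by writing $\Ft = \bigl(\begin{smallmatrix} X & W \\ S & T \end{smallmatrix}\bigr)$ with $X$ of size $2g \times 2g$. Conditions~(ii) and~(iii) translate into commutation of $X$ with every $A_i$ and with $B_j$ for $j \geq 2$, together with the absorption equations $A_i W = W$, $B_j W = W$, $S A_i = S$, $S B_j = S$ in the respective index ranges. Since each $A_i - I$ and $B_j - I$ is supported on a single off-diagonal entry, reading off which row of $W$ and which column of $S$ is killed by all of these differences forces $W$ to have only its first row nonzero and $S$ only its second column nonzero, giving exactly the claimed shapes. For $X$, decompose into $2 \times 2$ blocks $X = (X_{ab})_{1 \leq a,b \leq g}$. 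Commutation with $A_i$ kills the second row of $X_{ab}$ when $a = i \neq b$ and the first column when $b = i \neq a$; commutation with $B_j$ (for $j \geq 2$) analogously kills the first row or second column of off-diagonal blocks involving index $j$. Combining these, every off-diagonal block $X_{ab}$ with $a \neq b$ vanishes, since at least one of $a$ and $b$ lies in $\{2, \ldots, g\}$. Lemma~\ref{Lem_commute_with_U}(1) then pins down $X_{11}$ to the form $\bigl(\begin{smallmatrix} x & y \\ 0 & x \end{smallmatrix}\bigr)$, while Theorem~\ref{sru-lemma-2.2}(3) gives $X_{jj} = \alpha_j I_2$ for $j \geq 2$. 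The middle rows and columns of $\Ft$ (indexed by $3, \ldots, 2g$) thus decouple as a direct summand, and hypothesis~(i) collapses every $\alpha_j$ and the repeated diagonal entry of $X_{11}$ to $1$, and forces $T$ to have only $1$ as an eigenvalue as well.

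Only the free scalar $y$ then remains, and I invoke the single braid relation~(iv). Because the decoupled middle block is fixed by both $\Ft$ and $\Bt_1$, (iv) reduces to an $(m+2) \times (m+2)$ braid equation on the invariant subspace spanned by $\{e_1, e_2, e_{2g+1}, \ldots, e_{2g+m}\}$; on this subspace the restriction of $\Ft$ has top-left $2 \times 2$ block $\bigl(\begin{smallmatrix} 1 & y \\ 0 & 1 \end{smallmatrix}\bigr)$, the vectors $\vect{w}$ and $\vect{s}$ as its only nontrivial off-diagonal entries, and $T$ as its bottom-right block, while $\Bt_1$ restricts to $\Uh \oplus I_m$. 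A direct block-entry computation of this small braid equation then extracts, in turn, $y = 1$ from the $(2,1)$-entry, the orthogonality $\transpose{\vect{w}}\,\vect{s} = 0$ from the $(1,2)$-entry, the fixed-vector identities $\transpose{\vect{w}} T = \transpose{\vect{w}}$ and $T \vect{s} = \vect{s}$, and finally $T^2 - T = \vect{s}\,\transpose{\vect{w}}$. With $y = 1$ we conclude $X = A_1$, and the desired form of $\Ft$ drops out. The main obstacle is purely bookkeeping: the commutation reductions amount to repeated application of the rank-one structure of $A_i - I$ and $B_j - I$, and the final braid check is a small, finite matrix equation with only a handful of non-trivial entries. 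Conceptually this lemma is simpler than Lemma~\ref{key_lemma}, since the extra commutation with $\Bt_2$ prevents any interaction between the first two symplectic blocks, collapsing the $4 \times 4$ core there to a $2 \times 2$ core governed by a single braid relation.
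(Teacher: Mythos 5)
Your proof is correct and takes essentially the same route as the paper: block-decompose $\Ft$, use (ii)--(iii) to pin down the shapes of $W$, $S$ and $X$, use (i) to normalize the diagonal to $1$, and read off $y=1$, $\transpose{\vect{w}}\vect{s}=0$, $\transpose{\vect{w}}T=\transpose{\vect{w}}$, $T\vect{s}=\vect{s}$ and $T^2-T=\vect{s}\transpose{\vect{w}}$ from the braid relation (iv). The only differences are cosmetic: you eliminate the off-diagonal $2\times 2$ blocks of $X$ directly from the commutation with $\Bt_2$ instead of importing the $4\times 4$ form \eqref{form_X0} and then applying \eqref{c_4} for $j=2$, and you package the final computation as the restriction of (iv) to the invariant subspace spanned by $e_1, e_2, e_{2g+1}, \ldots, e_{2g+m}$, which reproduces the paper's equations \eqref{c_7}--\eqref{c_10} entry by entry.
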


\subsubsection*{Proof of Lemma \ref{an_analogue}}\par

The proof is basically the same as Lemma \ref{key_lemma}.
We write $\Ft = \begin{pmatrix} X & W \\ S & T \end{pmatrix}$ where $X$ and $T$ are, respectively,
$2g \times 2g$ and $m \times m$ matrices. Then the conditions (ii) to (iv), in turn, imply
\begin{itemize}
 \item \eqref{c_1}-\eqref{c_3} for $1 \leq i \leq g$;
 \item \eqref{c_4}-\eqref{c_6} for $2 \leq j \leq g$;
 \item \eqref{c_7}-\eqref{c_10} for $j=1$.
\end{itemize}

By the same argument for Lemma \ref{key_lemma}, by making use of \eqref{c_3}, \eqref{c_6} and
\eqref{c_2}, \eqref{c_5}, respectively, we obtain 
$$S = \begin{pmatrix} \vect{0} & \vect{s} & \vect{0} & \vect{0} & 
      \vect{0} & \cdots & \vect{0}\end{pmatrix}, \, 
  \text{ and } \, 
	 W = \transpose{\begin{pmatrix} \vect{w} & \vect{0} & \vect{0} & \vect{0} & 
			\vect{0} & \cdots & \vect{0} \end{pmatrix}}$$
for some $\vect{s}$, $\vect{w} \in \C^{m}$ where $\vect{0} \in \C^m$. Here, we note the entries in the 
fourth column of $S$ and the third row of $W$ must be zero because the condition (iii) is 
assumed for $j \geq 2$ rather than $j \geq 3$.
\par

Next, by the same argument for Lemma \ref{key_lemma} again, by using \eqref{c_1}, and \eqref{c_4} 
for $j \geq 3$ if $g \geq 3$, we obtain 
$$X = \begin{pmatrix} X_0 & 0 \\ 0 & I_{2g-4} \end{pmatrix}$$
where $X_0$ is the $4 \times 4$ matrix given by \eqref{form_X0} for some $\alpha$, $\beta$, $b$, $d \in \C$.
Furthermore, a direct computation shows that the equality \eqref{c_4} for $j=2$ implies 
$\alpha = \beta = d = 0$. Therefore, we may write
$$X_0 = \begin{pmatrix} I_2 + b (U - I_2) & 0 \\ 0 & I_{2g-4} \end{pmatrix}\, \text{ with $b \in \C$.}$$
Then, a straightforward computation shows that the equality \eqref{c_7} implies for 
$X_1 = I_2 + b (U - I_2)$, 
$$X_1 \Uh X_1 + \begin{pmatrix} 0 & \transpose{\vect{w}}\vect{s} \\ 0 & 0 \end{pmatrix} = \Uh X_1 \Uh.$$
By computing the matrix entries of the both sides, we can easily see 
$b=1$ and $\transpose{\vect{w}} \vect{s} = 0$. In particular, we have $X = A_1$.
\par

Finally, the equalities \eqref{c_8}-\eqref{c_10} for $j=1$ can be rewritten as 
\begin{itemize}
 \item $A_1 B_1 W + W T = B_1 W$,
 \item $S B_1 A_1 + T S = S B_1$,
 \item $S B_1 W + T^2 = T$.
\end{itemize}
These imply, in turn, 
$$\transpose{\vect{w}} T = \transpose{\vect{w}}, \quad T \vect{s} = \vect{s}, \, \text{ and } \,
  \vect{s} \transpose{\vect{w}} = T^2 - T.$$
This completes the proof of Lemma \ref{an_analogue}. \qed
\ppar

We can now prove Theorem \ref{for_extra_gen}. Suppose $\Ft \in \GL{2g+1, \C}$ satisfies 
the assumption of Theorem \ref{for_extra_gen}. Then we can apply Lemma \ref{an_analogue} with $m=1$
to obtain 
$$\Ft = \begin{pmatrix} A_1 & \vect{w} \\ \transpose{\vect{s}} & t \end{pmatrix}$$
where $\vect{w}$, $\vect{s} \in \C^{2g}$ and $t \in \C$ with the property given in the lemma, among which
\begin{align*}
 \vect{w} & = \transpose{\begin{pmatrix} w_1 & 0 & 0 & \cdots & 0 \end{pmatrix}},\\
 \vect{s} & = \transpose{\begin{pmatrix} 0 & s_2 & 0 & \cdots & 0 \end{pmatrix}}
\end{align*}
for some $w_1$, $s_2 \in \C$ with $w_1 s_2 = 0$. This implies either $\vect{w} = \vect{0}$ or 
$\vect{s} = \vect{0}$. We then have $t = 1$ from $\det{\Ft} = 1$. This completes the proof of 
Theorem \ref{for_extra_gen}. \qed
\par
 \section{A straightforward proof of Theorem \ref{2g-rep}} \label{straightforward} \par

The proof of Theorem \ref{2g-rep} given in \cite{sru} seems rather implicit in its final step, in the 
sense that it assumes without any reference to the literature 
that any injective homomorphism $f : \Sp{2g, \Z} \to \GL{2g, \C}$ is conjugate to the inclusion
	    $\Sp{2g, \Z} \hookrightarrow \GL{2g, \C}$ if it satisfies $f(A_i) = A_i$ and $f(B_i) = B_i$ 
	    for $1 \leq i \leq g$ where $\Sp{2g, \Z}$ is considered as the subgroup of $\GL{2g, \C}$ 
	    generated by $A_1$, $A_2$, \ldots, $A_g$, $B_1$, $B_2$, \ldots, $B_g$ given in 
	    Definition \ref{A_and_B} and $C_1$, $C_2$, \ldots, $C_{g-1}$ given in Definition \ref{L_and_Ck}.
One can avoid this and give a straightforward proof of Theorem \ref{2g-rep} by showing 
the following $2g$-dimensional analogue of Theorem \ref{prelim_thm:2g+1}, 
which is actually almost the same as proving the assumption just mentioned.
\par

\begin{prop}
 Let $g \geq 2$. For each $k$ with $1 \leq k \leq g-1$, suppose $X_k \in \GL{2g, \C}$ satisfies the 
 following conditions (i)-(iv).
 \begin{enumerate}
  \item[(i)] $X_k$ has exactly one eigenvalue $1$.
  \item[(ii)] $X_k A_i = A_i X_k$ for $i=1$, $2$, \ldots, $g$.
  \item[(iii)] If $g \geq 3$, then $X_k B_j = B_j X_k$ for each $j$ satisfying $1 \leq j \leq g$ and
	       $j \neq k$, $k+1$. 
  \item[(iv)] $X_k B_j X_k = B_j X_k B_j$ for $j=k$, $k+1$.
 \end{enumerate}
 Then, there exist nonzero complex numbers $p_2$, $p_3$, \ldots, $p_g$ such that for 
 $$P=\diag{I_2, p_2 I_2, p_3 I_2, \ldots, p_g I_2} \in \GL{2g, \C},$$ 
 it holds $P^{-1} A_i P = A_i$ and
 $P^{-1} B_i P = B_i$ for each $i=1$, $2$, \ldots, $g$, and furthermore, it also holds for 
 each $k=1$, $2$, \ldots, $g-1$, $P^{-1} X_k P = C_k$.
\end{prop}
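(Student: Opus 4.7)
The plan is to transpose the argument of Theorem \ref{prelim_thm:2g+1} to the present $2g$-dimensional setting. Since here there are no extra rows or columns beyond the symplectic block, the vectors $\vect{w_k}$ and $\vect{s_k}$ that appeared in the conclusion of that theorem have no room to appear, and we expect each $X_k$ to be conjugated exactly to $C_k$. The proof should therefore be strictly simpler than in the $(2g+1)$-dimensional case.

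First I would establish the base case $k=1$ by repeating the matrix computation underlying Lemma \ref{key_lemma}, but with the trailing rows and columns (the $W$, $S$, $T$ data) deleted throughout. The commutation $X_1 A_i = A_i X_1$ for $1 \leq i \leq g$ together with $X_1 B_j = B_j X_1$ for $j \geq 3$ (when $g \geq 3$), combined with Theorem \ref{sru-lemma-2.3}, forces the block decomposition $X_1 = \diag{X_0, I_{2g-4}}$ with $X_0$ a $4 \times 4$ matrix. Lemma \ref{Lem_commute_with_U} together with the single-eigenvalue condition then pins down
$$X_0 = \begin{pmatrix} I_2 + b(U - I_2) & \alpha(U - I_2) \\ \beta(U - I_2) & I_2 + d(U - I_2) \end{pmatrix}$$
for some $\alpha, \beta, b, d \in \C$, and the braid relations $X_1 B_j X_1 = B_j X_1 B_j$ for $j = 1, 2$ force $b = d = 1$ and $\alpha \beta = 1$ by the same entry-by-entry computation used in Lemma \ref{key_lemma}. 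Setting $p_2 = -\alpha^{-1}$ and conjugating by $\diag{I_2, p_2 I_2, I_{2g-4}}$ then turns $X_1$ exactly into $C_1$.

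For the inductive step, I would use the rotation matrix $G = \bar{\rho}_0(r) \in \GL{2g, \C}$ of the proof of Theorem \ref{prelim_thm:2g+1}, which satisfies $G^{-1} A_i G = A_{i-1}$, $G^{-1} B_i G = B_{i-1}$, and $G^{-1} C_k G = C_{k-1}$. Assuming $p_2, \ldots, p_{k+1}$ have already been produced so that conjugation by $Q_k := \diag{I_2, p_2 I_2, \ldots, p_{k+1} I_2, I_{2g-2k-2}}$ takes $X_1, \ldots, X_k$ respectively to $C_1, \ldots, C_k$ while fixing every $A_i$ and $B_i$, I would form $X'_{k+1} := Q_k^{-1} X_{k+1} Q_k$ (which still satisfies all four hypotheses), then conjugate by $G^k$ to land on a matrix satisfying the base-case hypotheses (since $G^k$ pulls back $B_{k+1}$ and $B_{k+2}$ to $B_1$ and $B_2$), apply the base case to extract some $p_{k+2} \in \C^{\times}$, and finally transport the resulting conjugation back through $G^k$ to obtain a conjugating matrix of the form $\diag{I_{2k+2}, p_{k+2} I_2, I_{2g-2k-4}}$. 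This is the correct additional factor to append to $Q_k$ in order to form $Q_{k+1}$.

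The only subtlety to verify is that the newly appended factor does not disturb the previously normalized $X_1, \ldots, X_k$: this follows because each $C_j$ with $j \leq k$ has the block form $\diag{\bar{C}_j, I_{2g-2j-2}}$ with $\bar{C}_j$ acting only on the first $2j+2 \leq 2k+2$ coordinates, and so commutes with $\diag{I_{2k+2}, p_{k+2} I_2, I_{2g-2k-4}}$. I do not expect any real conceptual obstacle beyond this bookkeeping, which mirrors exactly the argument of Theorem \ref{prelim_thm:2g+1} with the rank-one $W$, $S$, $T$ pieces removed.
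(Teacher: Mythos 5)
Your proposal is correct and follows exactly the route the paper intends: the paper itself only remarks that the Proposition ``can be proved by the same line of arguments'' as Theorem \ref{prelim_thm:2g+1}, and your write-up carries out precisely that plan (the key-lemma computation with the $W$, $S$, $T$ blocks deleted, then the induction via the rotation matrix $G$). The only cosmetic gloss is that the single-eigenvalue condition is also what turns the $\diag{\alpha_1 I_2,\ldots,\alpha_{g-2}I_2}$ block from Theorem \ref{sru-lemma-2.3} into $I_{2g-4}$, which you use implicitly; otherwise the argument is complete.
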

This proposition does not follow directly from Theorem \ref{prelim_thm:2g+1}, but can be proved by 
the same line of arguments, which is simpler because of lower degree of matrix.
\ppar

After completing the work in this paper, the author was informed that Korkmaz revised and combined
his two papers \cite{ldl} and \cite{sru} into a single paper with the same title as \cite{ldl}, 
which contained the proof of the assumption mentioned above.
 \section{Appendix} \par \label{appendix}

In this appendix, we slightly generalize Lemma \ref{LemA}, which originated from Korkmaz \cite{sru} as
mentioned before. Rather complicated arguments below, together with Remark \ref{weakness},  
might suggest the limitation of the approach adopted in this paper. We first show:

\begin{lemma} \label{LemA-1}
 Let $g \geq 3$ and $m \geq 0$. Suppose
 $$\phi: \M{S} \to \GL{2g+m, \C}$$
 is an arbitrary homomorphism.
 Let $a$ be a non-separating simple closed curve on $S$, and $t_a$
 the right-handed Dehn twist along $a$.
 Let $\lambda$ be an eigenvalue of $L_a = \phi(t_a)$.
 \par
 If  $\lambda_\# \geq m+3$, then $\Dm{\ela} \geq 2g-2$. In particular,
 $\lambda_\# \geq 2g-2$. Furthermore, if $g \geq m+4$, then  $\Dm{\ela} \geq 2g-1$.
\end{lemma}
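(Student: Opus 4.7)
The plan is to extend the strategy of Lemma \ref{LemA}, incorporating more flexible flag constructions to handle the extra ambient dimension $m$. First I fix a nonseparating simple closed curve $b$ intersecting $a$ transversely once, and let $R$ be the closure in $S$ of the complement of a regular neighborhood of $a \cup b$, so that $R$ has genus $g-1$ and $\M{R}$ embeds in $\M{S}$. Every element of $\M{R}$ commutes with both $t_a$ and $t_b$, so each subspace $V_k := \ker(L_a - \lambda I)^k$ is $\M{R}$-invariant, as are $\ela$, $\elb$, and $\ela \cap \elb$. The sequence $d_k := \Dm{V_k}$ is non-decreasing with non-increasing consecutive differences (a standard Jordan-form fact), so in particular every $d_{k+1} - d_k$ is bounded above by $d_1 = \Dm{\ela}$.

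For the first bound $\Dm{\ela} \geq 2g-2$, suppose for contradiction that $\Dm{\ela} \leq 2g-3$. Consider the $\M{R}$-invariant flag
$$0 = V_0 \subset V_1 \subset V_2 \subset \cdots \subset V_{\lambda_\#} \subset \C^{2g+m}.$$
Each intermediate quotient has dimension at most $\Dm{\ela} \leq 2g-3$, while the top quotient $\C^{2g+m}/V_{\lambda_\#}$ has dimension $2g+m-\lambda_\# \leq 2g-3$ thanks to $\lambda_\# \geq m+3$. All successive quotients thus have dimension at most $2(g-1)-1$, and Theorem \ref{triviality} applied to $\M{R}$ (whose surface has genus $g-1 \geq 3$ when $g \geq 4$) forces $\phi|_{\M{R}}$ to be trivial. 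Since $t_a$ is conjugate in $\M{S}$ to a Dehn twist along a nonseparating curve lying in $R$, this yields $L_a = I$, contradicting $\Dm{\ela} < 2g+m$.

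For the second bound $\Dm{\ela} \geq 2g-1$ under $g \geq m+4$, I additionally assume $\Dm{\ela} = 2g-2$ and split on whether $\ela$ equals $\elb$. In the coincidence case, Theorem \ref{sru-lemma-4.3} makes $\ela$ $\M{S}$-invariant, and the flag $0 \subset \ela \subset \C^{2g+m}$ has quotient dimensions $2g-2$ and $m+2$, both $\leq 2g-1$ under $g \geq m+4$; Theorem \ref{triviality} applied to $\M{S}$ trivializes $\phi$, a contradiction. Otherwise $\ela \cap \elb$ is $\M{R}$-invariant of dimension in $[\,2g-4-m,\ 2g-3\,]$, and the two-step flag $0 \subset \ela \cap \elb \subset \C^{2g+m}$ has quotient dimensions at most $2g-3$ and $2m+4$. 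The bound $2m+4 \leq 2g-3$ is exactly the content of $g \geq m+4$, so Theorem \ref{triviality} applied to $\M{R}$ yields the same contradiction.

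The main technical obstacle is the boundary case $g=3$ of the first bound, where $R$ has genus $2$ and Theorem \ref{triviality} is not directly applicable. For $m=0$ the result is immediate from Theorem \ref{2g-rep}, since any nontrivial $\phi$ is conjugate to $\rho_0$, for which one reads off $\Dm{\ela} = 2g-1$ directly from $\rho_0(t_a)$. For $m \geq 1$ I would invoke Remark \ref{triviality-abelian} in place of Theorem \ref{triviality} to deduce that $\phi(\M{R})$ is abelian; combined with $H_1(\M{R};\Z) = \Z/10\Z$ from Theorem \ref{abelianization}, this forces $L_a$ to have finite order, hence to be diagonalizable. Then $\Dm{\ela} = \lambda_\# \geq m+3 \geq 4 > 2g-3$, contradicting $\Dm{\ela} \leq 2g-3$ and completing the edge case.
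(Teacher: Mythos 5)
Your proposal is correct, and its second half (ruling out $\Dm{\ela}=2g-2$ when $g\geq m+4$ via the two flags $0\subset\ela\subset\C^{2g+m}$ and $0\subset\ela\cap\elb\subset\C^{2g+m}$) coincides with the paper's argument. The first half takes a genuinely different route at the low-genus step. The paper cuts along $a$ alone: it passes to $S_a$, the complement of a regular neighbourhood of $a$, which has genus $g-1\geq 2$ and still contains a curve $a'$ isotopic to $a$ in $S$. Applying Theorem \ref{triviality} together with Remark \ref{triviality-abelian} to the flag of generalized eigenspaces only yields that $\phi(\M{S_a})$ is abelian, but that already suffices: by the lantern relation inside $S_a$, the twist $t_{a'}$ is a product of commutators of twists along nonseparating curves, so $\phi(t_{a'})=L_a=I$. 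This makes the argument uniform in $g\geq 3$ and never requires full triviality on the subsurface group. You instead cut along $a\cup b$; since $a$ is not isotopic into $R$, you must transfer the conclusion back to $L_a$ by conjugacy, which forces you to use genuine triviality of $\phi|_{\M{R}}$ and hence $g\geq 4$, and you then patch $g=3$ by two separate arguments. Both patches are valid: for $m\geq 1$, abelianness of $\phi(\M{R})$ plus $H_1(\M{R};\Z)\cong\Z/10\Z$ gives that the image of a twist along a nonseparating curve in $R$ has finite order, hence is diagonalizable, and transporting this to $L_a$ by conjugation yields $\Dm{\ela}=\lambda_\#\geq m+3>2g-3$, a nice alternative to the lantern trick (do state the conjugation step explicitly there). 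For $m=0$ you import Theorem \ref{2g-rep}; this is admissible within the paper, which quotes that theorem as known, but it is a much heavier input and circular in spirit, since Korkmaz's proof of Theorem \ref{2g-rep} rests on the $m=0$ case of this very lemma. In short, the paper's choice of subsurface buys uniformity in $g$ and needs only abelianness; your choice costs an edge-case analysis at $g=3$ but otherwise runs on the same flag-plus-triviality machinery.
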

\par

 We remark that the case $m=0$ is nothing but Korkmaz \cite[Lemma 5.1]{sru}, and the case
 $m=1$ corresponds to Lemma \ref{LemA}.
 If $m \geq 1$,  the last consequence of the lemma can be strengthened to $\Dm{\ela} \geq 2g$, as
 we show in Proposition \ref{A-1ex}.
\ppar

\begin{proof}[Proof of Lemma \ref{LemA-1}]
 We first show $\Dm{\ela} \geq 2g-2$. To do so, assume to the contrary that $\Dm{\ela} \leq 2g-3$.
 \par

 Consider a regular neighborhood of $a$,  take the closure of its complement, and denote it by $S_a$.
 Note the genus of $S_a$ is $g-1 \geq 2$. The inclusion $S_a \hookrightarrow S$
 induces a homomorphism $\M{S_a} \to \M{S}$. Consider its  composition with $\phi$ and denote it by the
 same symbol as $\phi : \M{S_a} \to \GL{2g+m,\C}$.
 For each $i$ with $1 \leq i \leq \lambda_\#$, we set
 $W_i := \ker{\left( L_a - \lambda I \right)^i}$. We also set $W_0 = 0$ and $W_{\lambda_\# +1} = \C^{2g+m}$.
 Then, since the elements of $\M{S_a}$ commute with $t_a$, 
 we obtain an $\M{S_a}$-invariant flag
 $$0 = W_0 \subset W_1 \subset W_2 \subset \cdots \subset W_{\lambda_\#} \subset W_{\lambda_\# +1}=\C^{2g+m}.$$
 The dimension of each successive quotient $W_{i+1}/W_i$ ($0 \leq i \leq \lambda_\# - 1$)
 is equal to the number of the
 Jordan blocks for $L_a$ with eigenvalue $\lambda$ and with degree $\geq i+1$, which is at most the number
 of all the Jordan blocks with eigenvalue $\lambda$. We thus have $\Dm{W_{i+1}/W_i} \leq \Dm{\ela}$.
 Therefore, we have $\Dm{W_{i+1}/W_i} \leq 2g-3$. Furthermore, by the assumption $\lambda_\# \geq m+3$,
 we have $\Dm{W_{\lambda_\#+1}/W_{\lambda_\#}} = 2g+m-\lambda_\# \leq 2g-3$.
 Then by Theorem \ref{triviality} together with 
 Remark \ref{triviality-abelian}, $\phi(\M{S_a})$ is an abelian group. Hence $\phi$ is trivial 
 on the commutator subgroup of $\M{S_a}$.
 \par

 Next, choose a simple closed curve $a^\prime$ on $S_a$ so that $a^\prime$ is isotopic to $a$ in $S$.
 Since the genus of $S_a$ is at least two, we may choose simple closed curves $b$, $c$, $d$, $x$, $y$, $z$
 on $S_a$ which satisfy the lantern relation
 $$ t_{a^\prime} t_b t_c t_d = t_x t_y t_z.$$
 Then we may write  $t_{a^\prime} = (t_x t_b^{-1})(t_y t_c^{-1})(t_z t_d^{-1})$, and hence
 $t_{a^\prime}$ is contained in the commutator subgroup of $\M{S_a}$. Hence we have $\phi(t_{a^\prime}) = I$.
 Since $t_{a^\prime} = t_a$ in $\M{S}$, we have $L_a = I$, which contradicts to $\Dm{\ela} \leq 2g-3$.
 This shows $\Dm{\ela} \geq 2g-2$.
 \ppar

 Next, we prove the latter part of the lemma. Suppose $g \geq m+4$.
 Assume $\Dm{\ela} = 2g-2$ on the contrary.
 \par
 Let $b$ be a non-separating simple closed curve on $S$ which intersects with $a$
 transversely at a single point. Consider a regular neighbourhood of $a \cup b$ in the interior of $S$,
 and denote the closure of its complement in $S$ by $R$, whose genus is $g-1$. We divide into two cases
 according to whether $\ela$ coincides with $\elb$ or not.
 \par

 (I) The case $\ela = \elb$. Since $g \geq 2$, $\M{S}$ is generated by Dehn twists along non-separating
 simple closed curves. Therefore, by Theorem \ref{sru-lemma-4.3}, $\ela$ is $\M{S}$-invariant and has dimension
 $2g-2$. We also see $\Dm{\C^{2g+m}/\ela} = 2g+m - ( 2g-2 ) = m+2 \leq g-2 < 2g-1$. Therefore, we can apply
 Theorem \ref{triviality} to the $\M{S}$-invariant flag  $0 \subset \ela \subset \C^{2g+m}$ to see
 $\phi$ is trivial. This contradicts to $\Dm{\ela} = 2g-2$. 
 \par
 
 (II) The case $\ela \neq \elb$. Observe $\ela \cap \elb$ is $\M{R}$-invariant and its dimension satisfies
 $$ 2g - (m+4) \leq \Dm{\ela \cap \elb} \leq 2g-3.$$
 Therefore, we have a $\M{R}$-invariant flag
 $$0 \subset \ela \cap \elb \subset \C^{2g+m}$$
 which satisfies $\Dm{\ela \cap \elb} \leq 2g-3$ and $\Dm{\C^{2g+m}/(\ela \cap \elb)} \leq 2g-3$.
 Then by Theorem \ref{triviality}, $\phi(\M{R})$ is trivial, since $g-1 \geq 3$.
 Since $t_a$ is conjugate to an element of $\M{R}$ in $\M{S}$, we have $L_a = I$. This contradicts to
 $\Dm{\ela} = 2g-2$.
 \ppar
 This completes the proof.
 \end{proof}
\ppar

Finally, we show that the lower bound for $\Dm{\ela}$ can be improved by $1$, if $m \geq 1$.

 \begin{prop} \label{A-1ex}
  Suppose the assumption of Lemma \ref{LemA-1} with $g \geq m+4$.
  If $m \geq 1$, then it holds $\Dm{\ela} \geq 2g$.
 \end{prop}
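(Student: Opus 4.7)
\textbf{Proof plan for Proposition \ref{A-1ex}.} Lemma \ref{LemA-1} already yields $\Dm{\ela} \geq 2g-1$, so I only need to rule out $\Dm{\ela} = 2g-1$ when $m \geq 1$, and the plan is to follow closely the proof of Theorem \ref{eigen_dim} (which is essentially the case $m=1$). Assume for contradiction that $\Dm{\ela} = 2g-1$, and choose a nonseparating simple closed curve $b$ meeting $a$ transversely at a single point.

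If $\ela = \elb$, Theorem \ref{sru-lemma-4.3} makes $\ela$ an $\M{S}$-invariant subspace of codimension $m+1$; since $m+1 \leq 2g-1$ by $g \geq m+4$, Theorem \ref{triviality} forces $\phi$ trivial, contradicting $\Dm{\ela} = 2g-1$. Otherwise $\ela \neq \elb$, and I would take a compact connected subsurface $R \subset S$ of genus $g-1$ with connected nonempty boundary and no punctures, disjoint from $a \cup b$, as in the proof of Theorem \ref{eigen_dim}, viewing $\M{R}$ as a subgroup of $\M{S}$. The subspace $\ela \cap \elb$ is $\M{R}$-invariant with $2g-2-m \leq \Dm{\ela \cap \elb} \leq 2g-2$. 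If $\Dm{\ela \cap \elb} \leq 2g-3$, both this dimension and the codimension (at most $2m+2 \leq 2g-3$, since $g \geq m+4$) are bounded by $2(g-1)-1$, so Theorem \ref{triviality} applied to $\M{R}$ of genus $g-1 \geq 4$ makes $\phi(\M{R})$ trivial; since $t_a$ is conjugate to a Dehn twist in $\M{R}$, we get $L_a = I$, a contradiction.

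The remaining subcase $\Dm{\ela \cap \elb} = 2g-2$ is the main obstacle, and is where the hypothesis $m \geq 1$ enters. Here the induced $\M{R}$-action on $\C^{2g+m}/(\ela \cap \elb)$ of dimension $m+2 \leq 2g-3$ is trivial by Theorems \ref{fh-k} and \ref{abelianization}, while the action on the $2(g-1)$-dimensional $\ela \cap \elb$ is either trivial or conjugate to the symplectic representation $\rho_0^R$ of $\M{R}$ by Theorem \ref{2g-rep}. The trivial case makes $\phi(\M{R})$ abelian, and hence trivial by Theorem \ref{abelianization}, contradicting the assumption. In the symplectic case, one can choose a basis in which
$$\phi(f) = \begin{pmatrix} \rho_0^R(f) & M(f) \\ 0 & I_{m+2} \end{pmatrix} \qquad \text{for each } f \in \M{R};$$
the $m+2$ columns of $M$ define crossed homomorphisms $c_1, \ldots, c_{m+2} : \M{R} \to H_1(\Rb;\C)$, so Theorem \ref{crossed_hom} gives $c_i(t_d) = z_i [\tilde d]$ for any nonseparating simple closed curve $d \subset R$. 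Combined with the Picard--Lefschetz formula for $\rho_0^R(t_d) - I$, every column of $\phi(t_d) - I$ lies in the one-dimensional span of $[\tilde d]$ within the first $2g-2$ coordinates and vanishes on the last $m+2$ rows, giving $\rank{(\phi(t_d) - I)} \leq 1$. Since $d$ is also nonseparating in $S$ (as $S \smallsetminus R$ is connected), $t_a$ is conjugate to $t_d$ in $\M{S}$, and the unipotence of $\phi(t_d)$ forces $\lambda = 1$. Hence $\Dm{\ela} = \Dm{\ker{(L_a - I)}} \geq 2g+m-1$, which contradicts $\Dm{\ela} = 2g-1$ precisely when $m \geq 1$, completing the proof.
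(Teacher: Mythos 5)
Your proposal is correct and follows essentially the same route as the paper's own proof: the same case split on $\ela = \elb$ versus $\ela \neq \elb$, the same genus $g-1$ subsurface $R$ with connected boundary and no punctures, the same dimension analysis of $\ela \cap \elb$ leading to the trivial/symplectic dichotomy via Theorem \ref{2g-rep}, and the same use of Theorem \ref{crossed_hom} with the Picard--Lefschetz formula to force $\rank(\phi(t_d)-I) \leq 1$ and contradict $\Dm{\ela} = 2g-1$ when $m \geq 1$. Your closing step (unipotence forcing $\lambda = 1$ and hence $\Dm{\ela} \geq 2g+m-1$) is just a slightly more explicit rendering of the paper's final contradiction.
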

 \par

\begin{proof}
 By Lemma \ref{LemA-1}, it is sufficient to confirm
 $\Dm{\ela} \neq 2g-1$. 
 Assume to the contrary that $\Dm{\ela} = 2g-1$.
 Let $b$ be a non-separating simple closed curve which intersects with $a$
 transversely at a single point. Choose a {\em separating} simple closed curve $c_0$ such that
 $c_0$ bounds a compact surface $R$ of genus $g-1$ with connected boundary and with
 no punctures so that the two curves $a$ and $b$ are contained in the complement of $R$.
 Since the genera of both $R$ and its complement are at least $1$, the inclusion $R \hookrightarrow S$
 induces an injective homomorphism $\M{R} \to \M{S}$ (\cite{paris-rolfsen}), via which we consider $\M{R}$
 as a subgroup of $\M{S}$. We divide into two cases according to whether $\ela = \elb$ or not.
 \ppar
 
 (I) The case $\ela = \elb$.
 Since $g \geq 2$, $\M{S}$ is generated by Dehn twists along non-separating simple closed curves.
 Therefore, $\ela$ is $\M{S}$-invariant by Theorem \ref{sru-lemma-4.3},
 and has dimension $2g-1$.
 We also have $\Dm{\C^{2g+m}/\ela} = m+1 < 2g-1$. Hence we can apply Theorem \ref{triviality} to the
 $\M{S}$-invariant flag
 $$0 \subset \ela \subset \C^{2g+m}$$
 to see that $\phi$ is trivial, since $g \geq 3$.
 This contradicts to $\Dm{\ela} = 2g-1$.
 \par

 (II) The case $\ela \neq \elb$.
 Since the elements of $\M{R}$ commute with both $t_a$ and $t_b$, $\ela \cap \elb$ is $\M{R}$-invariant,
 and its dimension satisfies $\Dm{\ela \cap \elb} \leq 2g-2$. 
 \par

 If $\Dm{\ela \cap \elb} < 2g-2$, then,
 since $\Dm{\ela \cap \elb} \leq 2g-3$ and
 $$\Dm{\C^{2g+m}/(\ela \cap \elb)} \leq 2m+2 \leq 2g-3,$$
 we can apply Theorem \ref{triviality} to the $\M{R}$-invariant flag $0 \subset \ela \cap \elb \subset \C^{2g+m}$
 to see that $\phi$ is trivial on $\M{R}$ since the genus of $R$ is at least $3$. Since $t_a$ is conjugate to an
 element of $\M{R}$, $L_a = I$, which contradicts to $\Dm{\ela} = 2g-1$.
 Therefore, we have $\Dm{\ela \cap \elb} = 2g-2$.
 \par

 Next, since $\Dm{\C^{2g+m}/(\ela \cap \elb)} = m+2 \leq 2g-3$, the action of $\M{R}$ on 
 $\C^{2g+m}/(\ela \cap \elb)$ induced by $\phi$ is trivial by Theorem \ref{fh-k}. On the other hand,
 since the genus of $R$ is
 at least $3$, Theorem \ref{2g-rep} implies that the action of $\M{R}$ via $\phi$ on $\ela \cap \elb$ is
 either trivial or conjugate to the symplectic representation $\rho_0^R: \M{R} \to \GL{H_1(\bar{R};\C)}$
 where $\bar{R}$ denotes the closed surface obtained from $R$ by gluing a $2$-disk along its boundary.
 If the action is trivial, we may take any basis of $\ela \cap \elb$ and extend it arbitrarily to a basis
 of $\C^{2g+m}$, according to which we have
 $$\phi(f) = \left( \begin{array}{c|c}
		       I_{2g-2} &  * \\
		       \hline
			0 & I_{m+2}
		      \end{array}
		      \right)$$
 for each $f \in \M{R}$. Therefore, $\phi(\M{R})$ is an abelian group. On the other hand, $\M{R}$ is perfect since
 the genus of $R$ is at least $3$. Hence $\phi(\M{R})$ is trivial. Again, since $t_a$ is conjugate in $\M{S}$
 to an element of $\M{R}$, we have $L_a = I$, which contradicts to $\Dm{\ela} = 2g-1$. Therefore, the only possible
 case is that the action of $\M{R}$ on $\ela \cap \elb$ is conjugate to the symplectic representation $\rho_0^R$.
 \par

 In this case, since $H_1(\bar{R};\C) = H_1(R;\C)$,
 we may choose an isomorphism $u: \ela \cap \elb \to H_1(R;\C)$ such that 
  $$u(\phi(f)\cdot v) = f_* u(v) \quad \text{($f \in \M{R}$, $v \in \ela \cap \elb$)}.$$
 Here, $f_*$ denotes the natural action of $f$ on $H_1(R;\C)$.
 \par
 We now fix a basis of $\C^{2g+m}$ extending an arbitrary basis of $\ela \cap \elb$. Then, under the
 identification of $\ela \cap \elb$ with $H_1(R;\C)$ via $u$, the image of $f \in \M{R}$ under $\phi$ has the form
    $$\phi(f) = \left( \begin{array}{c|c}
		       \rho_0^R(f) &  \begin{matrix} w_1 & w_2 & \cdots & w_{m+2} \end{matrix} \\
		       \hline
			0 & I_{m+2}
		      	\end{array}\right) \qquad \text{($w_1$, $w_2$, \ldots, $w_{m+2} \in H_1(R;\C)$).}$$
 For another $f^\prime \in \M{R}$ with
 $$\phi(f^\prime) = \left( \begin{array}{c|c}
		       \rho_0^R(f^\prime) &  \begin{matrix} w_1^\prime & w_2^\prime & \cdots & w_{m+2}^\prime \end{matrix} \\
		       \hline
			0 & I_{m+2}
		      	\end{array}\right) \qquad \text{($w_1^\prime$, $w_2^\prime$, \ldots, $w_{m+2}^\prime \in H_1(R;\C)$)},$$
we have
$$\phi(f f^\prime) = \left( \begin{array}{c|c}
		       \rho_0^R(f f^\prime) &  \begin{matrix} w_1 + f_* w_1^\prime & w_2 + f_* w_2^\prime & 
			                             \cdots & w_{m+2} + f_* w_{m+2}^\prime \end{matrix} \\
		       \hline
			0 & I_{m+2}
		      \end{array}\right) $$
 This formula shows for each $i$ that the correspondence $f \in \M{R} \mapsto w_i$ defines a crossed homomorphism
 $$c_i: \M{R} \to H_1(R;\C).$$
 \par

 Now let $d$ be a non-separating simple closed curve on $R$. We fix an orientation of $d$ and denote its representing
 homology class by $[\tilde{d}] \in H_1(R;\C)$. Then by Theorem \ref{crossed_hom}, there exists a complex
 number $z_i$ for each $i$ such that $c_i(t_d) = z_i [\tilde{d}]$. On the other hand, the action of $t_d$ is
 given by
 $$(t_d)_* x = x + \langle [\tilde{d}], x \rangle [\tilde{d}] \quad\text{for $x \in H_1(R;\C)$}$$
 where $\langle \cdot, \cdot \rangle$ denotes the algebraic intersection form on $H_1(R;\C)$. Therefore, 
we have
 $$\rank{(\phi(t_d)-I)} = 1.$$ 
Since $t_a$ is conjugate to $t_d$ in $\M{S}$, we can conclude $\rank{(L_a - I)} =1$,
 which contradicts to the assumption $\Dm{\ela} = 2g-1$.
 \par

 This completes the proof of Proposition \ref{A-1ex}.
\end{proof}
 
 \vspace{12pt}
\noindent
\textsc{Department of Mathematics,
  Kochi University of Technology \\ Tosayamada,  Kami City, Kochi 782-8502 Japan} \\
E-mail: {\tt kasahara.yasushi@kochi-tech.ac.jp}
\end{document}